\newtheorem{thm}{Theorem}[section]
\newtheorem{prop}[thm]{Proposition}
\newtheorem{lem}[thm]{Lemma}
\newtheorem{cor}[thm]{Corollary}
\theoremstyle{definition}
\newtheorem*{claim*}{Claim}
\newtheorem{example}[thm]{Example}
\newcommand{\ms}[1]{\mathscr{#1}} 
\newcommand{\Zb}{\mathbb{Z}}
\newcommand{\Nb}{\mathbb{N}}
\newcommand{\Rb}{\mathbb{R}}
\newcommand{\Cb}{\mathbb{C}}
\newcommand{\Qb}{\mathbb{Q}}
\newcommand{\Tb}{\mathbb{T}}
\newcommand{\Kb}{\mathbb{K}}
\newcommand{\Pb}{\mathbb{P}}
\newcommand{\Fb}{\mathbb{F}}
\newcommand{\Wr}{\; \mathrm{Wr} \;}
\newcommand{\lcsc}{l.c.s.c.\@\xspace}
\newcommand{\defbold}{\textbf}
\newcommand{\inv}{^{-1}}
\newcommand{\triv}{\{1\}}
\newcommand{\divi}{\mathrm{div}}
\newcommand{\Mon}{\mathrm{Mon}}
\newcommand{\CC}{\mathrm{C}}
\newcommand{\N}{\mathrm{N}}
\newcommand{\Aut}{\mathrm{Aut}}
\newcommand{\cgrp}[1]{\overline{\langle #1 \rangle}}
\newcommand{\grp}[1]{\langle #1 \rangle}
\newcommand{\ol}[1]{\overline{#1}}
\begin{document}

\title[Abelian minimal closed normal subgroups of l.c.s.c. groups]{A classification of the abelian minimal closed normal subgroups of locally compact second-countable groups}
\author{Colin D. Reid}
\address{  University of Newcastle,
   School of Mathematical and Physical Sciences,
   University Drive,
   Callaghan NSW 2308, Australia}
\email{colin@reidit.net}
\thanks{The author is a Postdoctoral Research Associate funded through ARC project \emph{Zero-dimensional symmetry and its ramifications} (FL170100032).}

\begin{abstract}
We classify the locally compact second-countable (\lcsc) groups $A$ that are abelian and topologically characteristically simple.  All such groups $A$ occur as the monolith of some soluble \lcsc group $G$ of derived length at most $3$; with known exceptions (specifically, when $A$ is $\mathbb{Q}^n$ or its dual for some $n \in \mathbb{N}$), we can take $G$ to be compactly generated.  This amounts to a classification of the possible isomorphism types of abelian chief factors of \lcsc groups, which is of particular interest for the theory of compactly generated locally compact groups.
\end{abstract}

\maketitle

\section{Introduction}

Let $G$ be a locally compact second-countable (\lcsc) group.  A \defbold{minimal closed normal subgroup} of $G$ is a nontrivial closed normal subgroup $N$ of $G$ such that, whenever $K$ is a closed normal subgroup of $G$ contained in $N$, then $K = N$ or $K = \triv$.  More specifically, we define the \defbold{(topological) monolith} $\Mon(G)$ of $G$ to be the intersection of all nontrivial closed normal subgroups of $G$.

The main goal of the article is to prove the following classification theorem.

\begin{thm}\label{mainthm}\
\begin{enumerate}[(i)]
\item Let $n \in \Nb$ and let $p$ be a prime number.  The following abelian groups occur as the monolith of some compactly generated soluble \lcsc group of derived length at most $3$:
\[
C^n_p; \; C^{\aleph_0}_p; \; C^{(\aleph_0)}_p; C^{\aleph_0}_p \oplus C^{(\aleph_0)}_p;  \; \Rb^n; \; \Qb^{(\aleph_0)}; \; \widehat{\Qb}^{\aleph_0}; \; \Qb^n_p; \; \Qb_p(\aleph_0).
\]
\item The groups $\Qb^n \text{ and } \widehat{\Qb}^n \; (n \in \Nb)$ cannot occur as minimal closed normal subgroups in any compactly generated \lcsc group, but do occur as the monolith of some soluble \lcsc group of derived length at most $3$.
\item Every abelian \lcsc group that is topologically characteristically simple is isomorphic to one of the groups listed in (i) or (ii).
\end{enumerate}
\end{thm}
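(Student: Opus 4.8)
The plan is to prove part (iii) --- parts (i) and (ii) being explicit constructions --- by analysing an arbitrary abelian topologically characteristically simple \lcsc group $A$ through a short list of characteristic closed subgroups together with the structure theory of \lcsc abelian groups, showing in each branch that $A$ occurs in the lists of parts (i) and (ii).

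First, the identity component $A^\circ$ is a characteristic closed subgroup, so $A$ is either connected or totally disconnected. If $A$ is connected then, by the structure theorem for connected \lcsc abelian groups, $A\cong\Rb^n\times K$ with $K$ compact connected; since $K$ is the unique maximal compact subgroup of $A$ it is characteristic, so either $A\cong\Rb^n$, or $A=K$ is compact connected. In the latter case the Pontryagin dual $\widehat A$ is a countable torsion-free discrete group that is topologically characteristically simple; since $p\widehat A$ is characteristic and $\widehat A$ is torsion-free, $\widehat A$ is divisible, hence $\widehat A\cong\Qb^{(\kappa)}$ for some $\kappa\le\aleph_0$, and $A\cong\widehat{\Qb}^n$ or $\widehat{\Qb}^{\aleph_0}$.

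Next, suppose $A$ is totally disconnected, and fix a compact open subgroup $U\le A$. For each prime $p$ the subgroup $A[p]=\{a\in A:pa=0\}$ is characteristic, so either $A[p]=A$ for some $p$ --- that is, $pA=0$ --- or $A$ is torsion-free. If $pA=0$, then $A$ is a topological $\Fb_p$-vector space; the extension $0\to U\to A\to A/U\to 0$ splits (lift a basis of the discrete quotient $A/U$), with $U$ compact and $A/U$ countable discrete, and running over the possible ranks gives $A\cong C^n_p$, $C^{(\aleph_0)}_p$, $C^{\aleph_0}_p$, or $C^{\aleph_0}_p\oplus C^{(\aleph_0)}_p$. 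If instead $A$ is torsion-free, then each $nA$ ($n\ge 1$) is a nonzero characteristic subgroup, so $A$ is uniquely divisible. The subgroup $A_{\mathrm c}$ of elements lying in some compact subgroup is characteristic and closed; if $A_{\mathrm c}=\triv$ then $U=\triv$, $A$ is discrete, and a countable discrete uniquely divisible group is $\Qb^n$ or $\Qb^{(\aleph_0)}$.

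The remaining case, $A=A_{\mathrm c}$ --- so that $A$ is a directed union of compact subgroups (which, after replacing each by its sum with $U$, may be taken open) --- is the heart of the argument. I would equip each such subgroup with its canonical $\widehat{\Zb}$-module structure, check that these are compatible and, using the colimit topology, promote them to a topological $\widehat{\Zb}$-module structure on $A$ for which every continuous automorphism of $A$ is $\widehat{\Zb}$-linear. Then, writing $e_p\in\widehat{\Zb}=\prod_q\Zb_q$ for the $p$-th idempotent, each $e_pA$ is a characteristic closed subgroup; since $a=\sum_q e_qa$ (a convergent sum) for every $a\in A$, not every $e_pA$ is trivial, and the relations $e_p^2=e_p$ and $e_pe_q=0$ for $p\ne q$ force $e_pA=A$ for exactly one prime $p$. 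Thus the $\widehat{\Zb}$-action factors through $\Zb_p$, and since $A$ is torsion-free and $p$-divisible, multiplication by $p$ is a continuous bijection of the \lcsc group $A$, hence a homeomorphism by the open mapping theorem; so $A$ is a topological $\Qb_p$-vector space. Finally, $U$ is a compact open $\Zb_p$-submodule, necessarily $\cong\Zb_p^n$ for some $n\le\aleph_0$, while $A/\Qb_pU$ is a discrete $\Qb_p$-vector space and so, being countable, trivial; hence $A=\Qb_pU$ is $\Qb^n_p$ or $\Qb_p(\aleph_0)$, exhausting the lists. I expect this last case to be the main obstacle: one must pick out the right characteristic subgroups $e_pA$ and, more delicately, control the colimit topology on $A$ tightly enough that the $\widehat{\Zb}$-action --- and then, after inverting $p$, the $\Qb_p$-action --- is genuinely continuous and the open mapping theorem applies.
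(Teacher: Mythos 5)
Your reduction of part (iii) to a case analysis along characteristic subgroups ($A^\circ$, the torsion subgroups $A[p]$, the set of compact elements, the idempotents of $\widehat{\Zb}$) follows the same broad strategy as the paper. But there is a genuine error at the pivotal step: from ``each $nA$ is a nonzero characteristic subgroup'' you conclude that $A$ is (uniquely) divisible. Topological characteristic simplicity only forces nontrivial \emph{closed} characteristic subgroups to equal $A$; the subgroup $pA$ is the continuous image of $A$ under multiplication by $p$ and need not be closed, so all you may conclude is that $pA$ is \emph{dense}. This is not a removable technicality: for $A = \Qb_p(\aleph_0)$ the subgroup $pA$ is a proper dense characteristic subgroup (the element $(1,1,1,\dots) \in \Zb_p^{\Nb}$ has no $p$-th root inside the local direct product, since its unique $p$-th root in $\Qb_p^{\Nb}$ has every entry of valuation $-1$). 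Consequently $\Qb_p(\aleph_0)$ is not $p$-divisible, multiplication by $p$ is not surjective on it, and it carries no $\Qb_p$-vector space structure. Your final branch therefore breaks down exactly where the real work lies: the open mapping theorem cannot be applied to invert $p$, and the conclusion $A = \Qb_p U$ is false when $U \cong \Zb_p^{\aleph_0}$ (indeed $\Qb_p\Zb_p^{\aleph_0} = \bigcup_n p^{-n}\Zb_p^{\aleph_0}$ is not even $\sigma$-compact, since $p^{-1}\Zb_p^{\aleph_0}/\Zb_p^{\aleph_0} \cong C_p^{\aleph_0}$ is uncountable). As written, your argument would erroneously exclude the group $\Qb_p(\aleph_0)$ from the classification. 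The paper's route through this case is necessarily more delicate: from ``$pA$ dense and $P_p(A)=A$'' it deduces only that $\divi(A)$ is dense (Lemma~\ref{lem:pdense_divisible}), and then Proposition~\ref{prop:Qpkappa_characterization} identifies $A$ with $\Qb_p(\kappa)$ by building a compatible pair of bases for a compact open subgroup $U$ and for the countably generated dense divisible $\Zb_p$-submodule $\divi(A)\cap U$, rather than by exhibiting $A$ as a $\Qb_p$-vector space. The same caution about $nA$ versus $\overline{nA}$ is harmless in your discrete and compact branches (where the relevant subgroups are automatically closed, or density suffices via duality), so those parts survive.

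Separately, you dispose of parts (i) and (ii) as ``explicit constructions,'' but part (ii) contains a nonexistence assertion: that $\Qb^n$ and $\widehat{\Qb}^n$ cannot arise as minimal closed normal subgroups of \emph{any} compactly generated \lcsc group. This requires an argument (the paper adapts P.~Hall's: the centralizer $\CC_G(M)$ is open of finite covolume in the generating sense, and finitely many generators acting on $\Qb^n$ involve only finitely many primes in denominators, yielding a proper nontrivial $G$-invariant subgroup), and part (i) for $\Qb^{(\aleph_0)}$ and $\widehat{\Qb}^{\aleph_0}$ rests on Hall's nontrivial finitely generated soluble monolithic group. These should not be waved through.
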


Here $A^\kappa$ denotes the direct product of $\kappa$ copies of $A$; $A^{(\kappa)}$ the direct sum of $\kappa$ copies of $A$; and $\Qb_p(\kappa)$ is the local direct product of $\kappa$ copies of $\Qb_p$ over $\Zb_p$, that is, the group of functions from a set of size $\kappa$ to $\Qb_p$ with all but finitely many values in $\Zb_p$.

The topologically characteristically simple abelian groups in Theorem~\ref{mainthm} are listed in such a way that there is no redundancy, that is, no two groups listed are isomorphic unless given in the same form with the same parameters.  The group $C^{\aleph_0}_p \oplus C^{(\aleph_0)}_p$ is also known as the additive group of the field $\Fb_p((t))$ of formal Laurent series over the field of $p$ elements.  The groups naturally fall into five disjoint families, namely
\[
C^{\kappa}_p \oplus C^{(\kappa')}_p; \quad \Rb^n; \quad \Qb^{(\kappa)}; \quad \widehat{\Qb}^{\kappa}; \quad \Qb_p(\kappa);
\]
where $n \in \Nb$, $p$ is a prime number, $\kappa \in \Nb \cup \{\aleph_0\}$ and $\kappa' \in \{0,\aleph_0\}$.  For each family we will give a characterization of that family in the class of abelian \lcsc groups, in terms of basic properties of \lcsc groups.  These characterizations do not directly invoke the property of being topologically characteristically simple.

\subsection{Context}

All of the abelian groups listed in Theorem~\ref{mainthm} are well-known to scholars of locally compact abelian groups and are discussed in standard references on the subject, such as \cite{Armacost} and \cite{HR}.  The \defbold{homogeneous} locally compact abelian groups $A$, that is, such that $\Aut(A)$ acts transitively on the nonzero elements of $A$, were classified by L. Robertson (\cite[Theorem~4.13]{Robertson}): in the second-countable case these are
\[
C^{\kappa}_p \oplus C^{(\kappa')}_p; \quad \Rb^n; \quad \Qb^{(\kappa)}; \quad \Qb^n_p.
\]
(The group $\Qb_p(\aleph_0)$ is an open subgroup of the homogeneous group $\Qb \otimes \Zb^{\aleph_0}_p$, but the latter is not second-countable.)

There are also well-known structural descriptions of elementary abelian groups, abelian Lie groups, and divisible discrete abelian groups and their duals; these lead to characterizations of the first four families of topologically characteristically simple abelian groups.  The main novelties in the present article are therefore to collect together some known constructions and obstructions of monoliths of the given isomorphism types in compactly generated and/or soluble \lcsc groups (Section~\ref{sec:families}); characterize the remaining family $\Qb_p(\kappa)$ (Proposition~\ref{prop:Qpkappa_characterization}); and observe that this completes the classification of topologically characteristically simple abelian \lcsc groups (Section~\ref{sec:classification}).

Minimal closed normal subgroups, particularly those that are neither compact nor discrete, are of particular interest in the theory of compactly generated locally compact groups.  (Note that if $G$ is a compactly generated locally compact group, there are arbitrarily small compact normal subgroups $N$ such that $G/N$ is second-countable.)  We recall two results from the recent literature.

\begin{thm}[Caprace--Monod, see \cite{CM}]\label{thm:CM}
Let $G$ be a compactly generated locally compact group with no nontrivial compact or discrete normal subgroups.  Then every nontrivial closed normal subgroup of $G$ contains a minimal one.  Moreover, all but finitely many minimal closed normal subgroups of $G$ are abelian.
\end{thm}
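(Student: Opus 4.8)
The statement has two parts: existence of a minimal closed normal subgroup below every nontrivial closed normal subgroup, and abelianness of all but finitely many of them. I would organise the argument around the structure theory of compactly generated locally compact groups. Using van Dantzig's theorem together with the Gleason--Yamabe theorem, one first reduces to the situation where $G$ acts faithfully, continuously, properly and cocompactly on a connected graph $\Gamma$ (a Cayley--Abels graph attached to a compact generating set), with compact vertex stabilisers; faithfulness is exactly where the hypothesis ``no nontrivial compact normal subgroup'' enters, since the kernel of the action is a compact normal subgroup, being contained in a vertex stabiliser. The second ingredient I would invoke is that such a group admits an \emph{essentially chief series}: a finite $G$-invariant chain $\{1\} = G_0 \trianglelefteq G_1 \trianglelefteq \dots \trianglelefteq G_n = G$ of closed subgroups in which each factor $G_i/G_{i-1}$ is compact, discrete, or a chief factor of $G$ (minimal as a closed normal subgroup of $G/G_{i-1}$). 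This is itself proved by a Chabauty-compactness argument on descending chains of closed normal subgroups, and it is here that both hypotheses---no compact and no discrete nontrivial normal subgroup---are needed: a closed normal subgroup contained in a fixed compact identity neighbourhood has compact closure, so a descending chain of nontrivial closed normal subgroups cannot collapse to the identity through factors that are neither compact nor discrete.

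Granting the essentially chief series, the first part is immediate: given a nontrivial closed normal subgroup $N$ of $G$, refine the series so that it passes through $N$; the least nontrivial term of the refined series lying inside $N$ is a closed normal subgroup of $G$ which, being neither compact nor discrete, must be a chief factor over $\{1\}$, i.e.\ a minimal closed normal subgroup of $G$ contained in $N$.

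For the second part I would begin with the elementary observations underpinning the whole classification. If $M_1 \ne M_2$ are minimal closed normal subgroups then $M_1 \cap M_2 = \{1\}$ by minimality, so $[M_1, M_2] \subseteq M_1 \cap M_2 = \{1\}$; thus all minimal closed normal subgroups pairwise commute and pairwise intersect trivially. If moreover $M$ is non-abelian then $\overline{[M,M]}$, a closed characteristic subgroup of $M$ and hence normal in $G$, must equal $M$, so $M$ is topologically perfect with trivial centre. Now suppose there were infinitely many non-abelian minimal closed normal subgroups $M_1, M_2, \dots$. Since $M_{k+1}$ commutes with each $M_i$ for $i \le k$ and $\CC_G(M_{k+1})$ is closed, $M_{k+1} \cap \overline{M_1 \cdots M_k} \subseteq \Z(M_{k+1}) = \{1\}$, so $\overline{M_1 \cdots M_k} \subsetneq \overline{M_1 \cdots M_{k+1}}$, the quotient being neither compact nor discrete (it contains a copy of $M_{k+1}$). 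This realises an infinite internal quasi-product of non-discrete closed normal subgroups inside $G$, equivalently an infinite strictly ascending chain of closed normal subgroups all of whose consecutive factors are non-compact and non-discrete. Such a configuration is incompatible with compact generation of $G$: this is the key structural input, essentially the normal-subgroup structure theory of \cite{CM}, which propagates compact generation to suitable closed normal subgroups and thereby caps the length of any chain of closed normal subgroups with non-compact non-discrete factors. Hence only finitely many minimal closed normal subgroups are non-abelian.

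The main obstacle is the structure theory feeding both parts, not the elementary group theory. The first part genuinely fails if any of the three hypotheses is dropped---$\Qb_p$, whose compact subgroup $\Zb_p$ is normal, and the discrete group $\Zb^2$ already show that the ``no compact'' and ``no discrete'' hypotheses are indispensable---so the proof must exploit all of them, and the natural vehicle is the essentially-chief-series machinery together with the Chabauty topology on the space of closed subgroups of $G$. In the second part the delicate point is the quasi-product bookkeeping: one must verify that the closures $\overline{M_1 \cdots M_k}$ behave as genuine terms of a normal series---this is exactly where topological perfectness of the $M_i$ is used, since it is what makes each $M_i$ contribute a visible chief jump---and that compact generation really does bound the length of chains with non-compact non-discrete factors. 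I expect assembling this bookkeeping, rather than any individual estimate, to be the main labour.
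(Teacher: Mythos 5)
First, a point of comparison: the paper does not prove this statement at all --- it is imported verbatim from the Caprace--Monod corrigendum \cite{CM} and used as a black box, so there is no internal proof to measure your argument against. Judged on its own terms, your outline of the first assertion is sound \emph{granted} the Reid--Wesolek refinement theorem (which the paper quotes immediately afterwards): refining $\{1\} \le N \le G$ to an essentially chief series, the first nontrivial term lies in $N$ and cannot be compact or discrete by hypothesis, hence is a minimal closed normal subgroup of $G$. But be aware that this simply relocates all of the content into the refinement theorem, whose proof (Chabauty compactness plus the Cayley--Abels graph) is exactly the hard part and is not reproduced by your two-sentence gloss about descending chains; historically the Caprace--Monod existence result came first and is an input to, not a consequence of, that circle of ideas.

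The genuine gap is in the second assertion. Your elementary reductions are correct: distinct minimal closed normal subgroups commute and intersect trivially, a non-abelian one is topologically perfect with trivial centre, and the closures $\overline{M_1\cdots M_k}$ form a strictly ascending chain of closed normal subgroups. The problem is the punchline, namely that ``compact generation caps the length of any chain of closed normal subgroups with non-compact non-discrete factors.'' That is not a citable fact, and as literally stated it is false: a factor $G_{i+1}/G_i$ can be non-compact and non-discrete while still being compact-by-discrete, and nothing about compact generation forbids arbitrarily long (even infinite) chains whose factors are of that negligible type --- this is precisely why both \cite{CM} and \cite{RW} must single out factors that are neither compact, nor discrete, \emph{nor abelian} (or, in the Reid--Wesolek language, non-negligible), and why the finiteness of \emph{that} count is the theorem rather than a tool. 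To close the argument you must feed the topological perfectness and non-discreteness of each $M_{k+1}$ into a genuine bound --- in \cite{CM} this is a geometric argument on the Cayley--Abels graph; in the essentially-chief-series framework it is the Jordan--H\"older-type uniqueness statement for non-negligible chief factors --- and neither is ``bookkeeping.'' A smaller but real issue in the same step: the image of $M_{k+1}$ in $G/\overline{M_1\cdots M_k}$ is an injective continuous image, but injective continuous homomorphisms of locally compact groups need not be embeddings, so ``the factor contains a copy of $M_{k+1}$ and is therefore neither compact nor discrete'' needs an argument (e.g.\ via the open mapping theorem once one knows the image is closed, which is itself part of the quasi-product analysis you defer).
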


Using Theorem~\ref{mainthm}, one can sharpen Theorem~\ref{thm:CM} to the following:

\begin{cor}\label{cor:CM}
Let $G$ be a compactly generated locally compact group with no nontrivial compact or discrete normal subgroups.  Then every nontrivial closed normal subgroup of $G$ contains a minimal one.  Moreover, all but finitely many minimal closed normal subgroups of $G$ are of the form $\Fb_p((t))$, $\Rb^n$ or $\Qb_p(\kappa)$, for some prime $p$, $n \in \Nb$ and $\kappa \in \Nb \cup \{\aleph_0\}$ as applicable.
\end{cor}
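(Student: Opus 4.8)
The plan is to deduce the corollary directly from Theorem~\ref{thm:CM} and Theorem~\ref{mainthm}, with no genuinely new work. The first assertion, that every nontrivial closed normal subgroup of $G$ contains a minimal one, is precisely the first assertion of Theorem~\ref{thm:CM}, so nothing remains there. Theorem~\ref{thm:CM} also gives that all but finitely many minimal closed normal subgroups of $G$ are abelian, so it suffices to show that every abelian minimal closed normal subgroup $N$ of $G$ is isomorphic to $\Fb_p((t))$, $\Rb^n$ or $\Qb_p(\kappa)$ for suitable $p$, $n$, $\kappa$.

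The first step is to check that such an $N$ is an abelian \lcsc group that is topologically characteristically simple, so that Theorem~\ref{mainthm}(iii) applies to it. Second-countability of $N$ follows from that of $G$: since $G$ is compactly generated and has no nontrivial compact normal subgroup, the fact recalled in the introduction (arbitrarily small compact normal subgroups with second-countable quotient) forces $G$ itself to be second-countable, hence so is its closed subgroup $N$. For topological characteristic simplicity, if $C$ is a closed characteristic subgroup of $N$ then, because $N \normal G$, each inner automorphism of $G$ restricts to a topological automorphism of $N$ and so preserves $C$; thus $C \normal G$, and minimality of $N$ gives $C = \triv$ or $C = N$.

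Now apply Theorem~\ref{mainthm}(iii): $N$ is isomorphic to one of
\[
C^n_p,\; C^{\aleph_0}_p,\; C^{(\aleph_0)}_p,\; C^{\aleph_0}_p \oplus C^{(\aleph_0)}_p,\; \Rb^n,\; \Qb^{(\aleph_0)},\; \widehat{\Qb}^{\aleph_0},\; \Qb^n_p,\; \Qb_p(\aleph_0),\; \Qb^n,\; \widehat{\Qb}^n .
\]
Then I eliminate the impossible cases. The groups $C^n_p$, $C^{\aleph_0}_p$, $\widehat{\Qb}^{\aleph_0}$ are compact and the groups $C^n_p$, $C^{(\aleph_0)}_p$, $\Qb^{(\aleph_0)}$ are discrete, so all of these are ruled out by the hypothesis that $G$ has no nontrivial compact or discrete normal subgroup; and $\Qb^n$, $\widehat{\Qb}^n$ are ruled out by Theorem~\ref{mainthm}(ii), which asserts they cannot occur as a minimal closed normal subgroup of any compactly generated \lcsc group. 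The survivors are exactly $C^{\aleph_0}_p \oplus C^{(\aleph_0)}_p \cong \Fb_p((t))$, $\Rb^n$, and $\Qb^n_p$ together with $\Qb_p(\aleph_0)$; since $\Qb_p(n) = \Qb^n_p$, the latter two are precisely the groups $\Qb_p(\kappa)$ with $\kappa \in \Nb \cup \{\aleph_0\}$, which gives the claimed list.

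Since the substantive content is entirely carried by the cited theorems, I do not expect a real obstacle; the only points needing a moment's care are the reduction to the second-countable setting and the elementary observation that a minimal closed normal subgroup is automatically topologically characteristically simple. (If one also wanted to record that the three families are each actually realised, that would follow from the constructions behind Theorem~\ref{mainthm}(i), but it is not needed for the statement as given.)
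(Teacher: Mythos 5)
Your proof is correct and is essentially the argument the paper intends (the paper leaves the deduction implicit, saying only that Theorem~\ref{thm:CM} is sharpened ``using Theorem~\ref{mainthm}''): reduce to abelian minimal closed normal subgroups via Theorem~\ref{thm:CM}, note these are topologically characteristically simple \lcsc groups, apply the classification, and discard the compact and discrete isomorphism types together with $\Qb^n$ and $\widehat{\Qb}^n$. The two points you flag --- second-countability of $G$ from the absence of nontrivial compact normal subgroups, and characteristic simplicity of a minimal closed normal subgroup --- are exactly the right details to check, and you handle both correctly.
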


Each of the groups $\Fb_p((t))$, $\Rb^n$ and $\Qb_p(\kappa)$ can occur $2^{\aleph_0}$ times as a minimal closed normal subgroup of a compactly generated soluble \lcsc group: see Example~\ref{ex:minimal}.

\begin{thm}[{Reid--Wesolek \cite[Theorem~1.3]{RW}}]
Let $G$ be a compactly generated locally compact group.  Then every finite series of closed normal subgroups of $G$ can be refined to another such series
\[
\{1\} = G_0 < G_1 < \dots < G_n = G,
\]
such that for each $i \in \{1,\dots,n\}$, the factor $G_i/G_{i-1}$ is compact, discrete, or a chief factor of $G$ (that is, $G_i/G_{i-1}$ is a minimal closed normal subgroup of $G/G_{i-1}$).
\end{thm}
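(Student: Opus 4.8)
\emph{Proof plan.} The plan is to reduce to the construction of a single finite \emph{essentially chief series} for $G$ --- a finite chain of closed normal subgroups whose successive factors are each compact, discrete, or a chief factor of $G$ --- and then to build one greedily by peeling factors off the bottom; the real content is a finiteness statement guaranteeing that the construction terminates. The reductions go as follows. Refining an arbitrary finite series $\triv = N_0 < \dots < N_k = G$ amounts, gap by gap, to the assertion that for a compactly generated $G$ and a closed $H \normal G$, the subgroup $H$ carries a finite chain of closed normal subgroups of $G$ whose successive factors are compact, discrete, or chief factors of $G$: splicing the chains obtained in the quotients $G/N_{j-1}$ then gives the desired refinement, since the chief factors of $G/N_{j-1}$ are exactly those of $G$ lying above $N_{j-1}$. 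The assertion for general $H$ in turn follows from the case $H = G$: given an essentially chief series $\triv = G_0 < \dots < G_n = G$, the subgroups $H_i := G_i \cap H$ are closed and normal in $G$, and each factor $H_i/H_{i-1}$ maps onto a normal subgroup of $G_i/G_{i-1}$ which --- once one verifies that this image is closed, a topological point I would handle with care, exploiting the rigidity of compact, discrete, and chief factors --- is compact if $G_i/G_{i-1}$ is, discrete if it is, and either trivial or all of $G_i/G_{i-1}$ if $G_i/G_{i-1}$ is chief; deleting repetitions yields the chain for $H$. Finally, since $G$ has arbitrarily small compact normal subgroups $K$ with $G/K$ second-countable, prepending the compact factor $\triv < K$ lets us assume $G$ is second-countable.

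Next I would build an essentially chief series for compactly generated second-countable $G$ greedily. Starting from $G_0 = \triv$, suppose $G_0 < \dots < G_r$ has been constructed. If $G/G_r$ is trivial we are done. If $G/G_r$ has a nontrivial compact or discrete closed normal subgroup, take its preimage for $G_{r+1}$. Otherwise $G/G_r$ is nontrivial, compactly generated, and has no nontrivial compact or discrete normal subgroup, so Theorem~\ref{thm:CM} supplies a minimal closed normal subgroup of $G/G_r$, whose preimage we take for $G_{r+1}$ (a chief factor). The connected case is absorbed here: whenever $G/G_r$ has nontrivial identity component, Gleason--Yamabe furnishes a small compact normal subgroup of $(G/G_r)^\circ$ --- which one can arrange to be normal in $G/G_r$ --- with connected Lie quotient, and a connected Lie group manifestly has a finite essentially chief series (split off the vector and then the toral part of the solvable radical, then the simple factors of the semisimple quotient together with their finite centres), so only boundedly many steps are spent on connected sections.

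The hard part is to show that this greedy process terminates after finitely many steps; a priori it produces only a countable ascending chain $G_0 < G_1 < \dots$, and in a general second-countable group such a chain can be infinite, so compact generation must be used decisively. My plan is to extract from compact generation a finiteness theorem for the normal structure of $G$ strong enough to bound the length of any such chain. Concretely I would aim to establish: (a) by iterating the finiteness clause of Theorem~\ref{thm:CM} through quotients, $G$ has, up to the natural equivalence of chief factors, only finitely many non-abelian ones; (b) the socle of any compactly generated locally compact group with no nontrivial compact or discrete normal subgroup is a quasi-product of \emph{finitely many} minimal closed normal subgroups, so that repeatedly passing to the quotient by the socle strictly decreases a complexity taking only finitely many values; and (c) the compact and discrete layers cannot accumulate, again because compact generation passes to quotients and forces the associated residuals to stabilise. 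Assembling (a)--(c) bounds the length of the greedy chain, yielding the finite essentially chief series and, via the reductions above, the theorem. The principal obstacle is precisely this extraction of a usable finiteness statement from compact generation; the reductions and the greedy construction are bookkeeping by comparison.
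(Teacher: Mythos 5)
This statement is quoted in the paper from Reid--Wesolek \cite[Theorem~1.3]{RW}; the paper itself contains no proof of it, so there is no internal argument to compare yours against. Judged on its own terms, your proposal correctly identifies the architecture of the known proof --- reduce to producing a single finite essentially chief series, build it greedily using Theorem~\ref{thm:CM} at each stage, and observe that everything hinges on a termination statement extracted from compact generation --- but it does not actually close that central gap. Your items (a)--(c) are offered as the finiteness input, yet none of them is proved, and as stated they are not an adequate substitute. Item (b) is the most problematic: quotienting repeatedly by the socle does not terminate just because each socle is a quasi-product of finitely many minimal closed normal subgroups; you need a complexity that strictly decreases and takes finitely many values, and you never identify one. (In the actual Reid--Wesolek argument this role is played, for the totally disconnected part, by the degree of a Cayley--Abels graph, together with the structure theory of connected locally compact groups for the connected part; that induction is the real content of the theorem and is absent from your plan.) Item (c), that ``compact and discrete layers cannot accumulate,'' is precisely the kind of assertion the theorem exists to establish, so invoking it is circular in effect.

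There is also an unresolved problem in your reduction from a general closed normal subgroup $H$ to the case $H=G$. Setting $H_i = G_i \cap H$, the factor $H_i/H_{i-1}$ corresponds to the subgroup $(G_i\cap H)G_{i-1}/G_{i-1}$ of $G_i/G_{i-1}$, and this image need not be closed: when $G_i/G_{i-1}$ is a chief factor the image can be a proper dense subgroup, in which case $H_i/H_{i-1}$ is neither compact, nor discrete, nor a chief factor, and the quotient topology on $H_i/H_{i-1}$ need not agree with the subspace topology on the image. You flag this as ``a topological point I would handle with care,'' but it is not a technicality one can wave at --- it is a reason this particular reduction fails, and the published proof instead establishes the refinement statement for the pair $(G,H)$ directly by induction rather than deducing it from the case $H=G$. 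In short: the bookkeeping parts of your plan are sound, but the two load-bearing steps (termination, and the passage to a general normal subgroup) are exactly where the proof lives, and neither is supplied.
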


Subsequent articles of the author and Wesolek have developed a theory of nonabelian chief factors; the present article is complementary in that it identifies the possible isomorphism types of abelian chief factors.  In particular, one obtains the following structural result for abelian factors of compactly generated locally compact groups.

\begin{cor}
Let $G$ be a compactly generated locally compact group and let $M$ and $N$ be closed normal subgroups such that $M \le N$ and $N/M$ is abelian.  Then there is a finite series
\[
M = G_0 < G_1 < \dots < G_n = N
\]
of closed normal subgroups of $G$, such that for each $0 \le i < n$, the factor $G_{i+1}/G_i$ is compact, discrete, or is a chief factor isomorphic to $\Fb_p((t))$, $\Rb^n$, or $\Qb_p(\kappa)$, for some prime $p$, $n \in \Nb$ and $\kappa \in \Nb \cup \{\aleph_0\}$ as applicable.
\end{cor}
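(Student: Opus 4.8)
The plan is to reduce the statement to the classification of Theorem~\ref{mainthm} by means of the Reid--Wesolek refinement theorem. First I would apply \cite[Theorem~1.3]{RW} to the two-term series $M \le N$ of closed normal subgroups of the compactly generated locally compact group $G$, obtaining a finite series $M = G_0 < G_1 < \dots < G_n = N$ of closed normal subgroups of $G$ in which each factor $G_{i+1}/G_i$ is compact, discrete, or a chief factor of $G$. Whenever $M \le G_i \le G_{i+1} \le N$, the factor $G_{i+1}/G_i$ is a subquotient of the abelian group $N/M$, hence abelian; so every chief factor appearing is abelian. It therefore suffices to show: an abelian chief factor $A$ of a compactly generated locally compact group $H$ that is neither compact nor discrete is isomorphic to $\Fb_p((t))$, $\Rb^n$ or $\Qb_p(\kappa)$ for some prime $p$, $n \in \Nb$ and $\kappa \in \Nb \cup \{\aleph_0\}$. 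Applying this with $H = G/G_i$, and recalling that $\Qb^n_p = \Qb_p(n)$, then completes the proof.

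To prove this reduced statement, let $A = G_{i+1}/G_i$ be a minimal closed normal subgroup of $H$, abelian and neither compact nor discrete. I would first pass to the second-countable setting: choose a compact normal subgroup $L$ of $H$ with $H/L$ second-countable. Since $A$ is a minimal closed normal subgroup of $H$ and is not compact, $A \cap L = \triv$; since $L$ is compact and $A$ is closed, $AL$ is closed in $H$; and since $A$ is $\sigma$-compact (being a closed subgroup of the compactly generated, hence $\sigma$-compact, group $H$), the open mapping theorem makes the continuous bijection $A \to AL/L$ a topological isomorphism. In particular $AL/L$ is again neither compact nor discrete, and a short computation with the Dedekind modular law shows that $AL/L$ is a minimal closed normal subgroup of the compactly generated second-countable group $H/L$. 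Replacing $H$ by $H/L$ and $A$ by $AL/L$, we may thus assume $A$ is an abelian minimal closed normal subgroup of a compactly generated \lcsc group $H$ that is neither compact nor discrete; in particular $A$ is itself \lcsc.

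Next, I would observe that any minimal closed normal subgroup $A$ of a topological group $H$ is topologically characteristically simple: a closed characteristic subgroup of $A$ is stable under every conjugation map $a \mapsto hah^{-1}$ ($h \in H$), which is a topological automorphism of $A$, so it is a closed normal subgroup of $H$ contained in $A$ and hence equals $\triv$ or $A$ by minimality. Therefore $A$ is an abelian \lcsc group that is topologically characteristically simple, and Theorem~\ref{mainthm}(iii) identifies $A$ with one of the groups listed in parts~(i) and~(ii) of that theorem. Among these, $C^n_p$, $C^{\aleph_0}_p$, $\widehat{\Qb}^{\aleph_0}$ and $\widehat{\Qb}^n$ are compact, and $C^{(\aleph_0)}_p$, $\Qb^{(\aleph_0)}$ and $\Qb^n$ are discrete, so none of these is possible; the only surviving options are $\Fb_p((t)) = C^{\aleph_0}_p \oplus C^{(\aleph_0)}_p$, $\Rb^n$, $\Qb^n_p = \Qb_p(n)$ and $\Qb_p(\aleph_0)$, which are precisely the groups permitted in the statement.

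Essentially all the substance of the result is packaged into Theorem~\ref{mainthm}, so the argument is mostly bookkeeping. The main points requiring care are the second-countability reduction — in particular, verifying that passing from $H$ to $H/L$ keeps the chief factor minimal and does not turn it into a compact or discrete group — together with the small but easily overlooked observation that a minimal closed normal subgroup is automatically topologically characteristically simple; once these are in place the conclusion is immediate. (One could alternatively appeal to Corollary~\ref{cor:CM} after first splitting off the compact and discrete normal subgroups of $H$, but the route through Theorem~\ref{mainthm}(iii) seems more direct.)
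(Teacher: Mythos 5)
Your proof is correct and is exactly the intended argument: the paper states this corollary without proof as an immediate consequence of the Reid--Wesolek refinement theorem together with Theorem~\ref{mainthm}, which is precisely the route you take. You also correctly supply the two details the paper only gestures at (the reduction modulo a compact normal subgroup to the second-countable case, and the observation that a minimal closed normal subgroup is topologically characteristically simple), so nothing is missing.
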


\section{Preliminaries}

\subsection{Notation and terminology}

Throughout, ``\lcsc'' stands for ``locally compact second-countable''.  By convention, we require a locally compact group to be equipped with a \emph{Hausdorff} topology compatible with the group structure.  In particular, it follows from standard results (see \cite[Theorem~8.3]{HR}) that a locally compact group is first-countable if and only if it is metrizable, and second-countable if and only if it is metrizable and $\sigma$-compact.

Automorphisms or isomorphisms of topological groups are required to be homeomorphisms that preserve the group structure.  In particular, a \defbold{characteristic} subgroup is one that is preserved by all automorphisms in the aforementioned sense.

Let $\Nb$ be the set of natural numbers (not including $0$) and let $\Pb$ be the set of prime numbers.

When using a commutative topological ring $R$ as a group, we mean its additive group with the same topology; for the multiplicative group of units of a given ring $R$ we write $R^*$ and equip $R^*$ with the subspace topology, and unless otherwise specified $R^*$ acts on $R$ by multiplication.  Specifically, $\Zb$ and $\Qb$ have the discrete topology while $\Rb$, $\Zb_p$, $\Qb_p$ and $\Fb_p((t))$ have their usual nondiscrete locally compact topologies.

Where convenient we will write abelian groups additively.  Every abelian group $A$ is naturally a $\Zb$-module: given $a \in A$ we define $na$ to be the sum of $n$ copies of $a$ (for $n \ge 0$) or respectively the sum of $-n$ copies of $-a$ (for $n < 0$).

Let $C_{\aleph_0} = \Zb$ and $C_n = \Zb/n\Zb$ for $n \in \Nb$.

The \defbold{(topological) monolith} of a locally compact group $G$ is the intersection $\Mon(G)$ of all nontrivial closed normal subgroups of $G$.  If $\Mon(G)$ is nontrivial, we say that $G$ is \defbold{monolithic}.

Given a sequence $(G_i)_{i \in I}$ of topological groups, the product $\prod_{i \in I}G_i$ is also a group with the product topology.  We note that $\prod_{i \in I}G_i$ is locally compact if and only if all of the groups $G_i$ are locally compact and all but finitely many $G_i$ are compact.
 
Let $(G_i,H_i)_{i \in I}$ be a sequence of pairs of topological groups, where $H_i$ is an open subgroup of $G_i$.  The \defbold{local direct product} $\bigoplus_{i \in I}(G_i,H_i)$ is the set of all tuples $(g_i)_{i \in I}$ such that $g_i \in G_i$ for all $i \in I$ and $g_i \in H_i$ for all but finitely many $i \in I$.  Multiplication is defined pointwise.  The group $\prod_{i \in I}H_i$ is naturally embedded in $\bigoplus_{i \in I}(G_i,H_i)$ as a subgroup; we equip $\bigoplus_{i \in I}(G_i,H_i)$ such that the natural inclusion of $\prod_{i \in I}H_i$ into $\bigoplus_{i \in I}(G_i,H_i)$ is an open embedding.  Note that if $H_i$ is locally compact for all $i \in I$ and compact but for all but finitely many $i \in I$, then $\bigoplus_{i \in I}(G_i,H_i)$ is locally compact.  In the case that $G_i$ is discrete and $H_i$ is trivial for all $i \in I$, then $\bigoplus_{i \in I}(G_i,\{1\})$ simplifies to the direct sum $\bigoplus_{i \in I}G_i$ with the discrete topology.

Given a topological group $G$, write $G^{\kappa}$ for the direct product of $\kappa$ copies of $G$ (or copies indexed by $\kappa$), with the product topology; when $G$ is discrete, write $G^{(\kappa)}$ for the direct sum of $\kappa$ copies of $G$ (or copies indexed by $\kappa$) with the discrete topology.  Note that given $n \in \Nb$ and cardinals $\kappa,\kappa'$ we have
\[
G^{\kappa+n} \times G^{(\kappa')} \cong G^{\kappa} \times G^{(\kappa'+n)};
\]
we will use this isomorphism without further comment when indexing local direct powers of discrete groups.

Given an abelian topological group $U$, a \defbold{(topological) direct summand} is a subgroup $V$ of $U$ such that $U$ admits a direct sum decomposition $U = V \oplus W$ as a topological group, meaning that both $V$ and $W$ are closed subgroups and $U$ carries the product topology.

\subsection{Compact elements of abelian groups}

\begin{thm}[{\cite[Theorem~25]{Morris}}]\label{thm:open_subgroup}
Every locally compact abelian group has an open subgroup of the form $\Rb^a \oplus K$, where $a \in \Nb \cup \{0\}$ and $K$ is compact.
\end{thm}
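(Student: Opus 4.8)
The plan is to reduce to the compactly generated case, where the statement is classical, and then pass back up via an open subgroup. Given a locally compact abelian group $A$, I would first fix a compact symmetric neighbourhood $V$ of the identity and set $B := \bigcup_{n \ge 1} V^n$. Since $B$ is a subgroup containing the neighbourhood $V$ of the identity, it is open (and hence closed) in $A$; and it is generated by the compact set $V$, so $B$ is a compactly generated locally compact abelian group. As any open subgroup of $B$ is automatically an open subgroup of $A$, it suffices to find an open subgroup of $B$ of the desired form.

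Next I would apply the structure theorem for compactly generated locally compact abelian groups (see \cite{HR}, or \cite{Morris}): there exist $a, b \in \Nb \cup \{0\}$ and a compact abelian group $K$ together with a topological isomorphism $B \cong \Rb^a \times \Zb^b \times K$. In this product the subgroup $\Rb^a \times \{0\}^b \times K$ is open, because $\{0\}^b$ is open in the discrete group $\Zb^b$, and it is visibly of the form $\Rb^a \oplus K$ with $K$ compact and $a \in \Nb \cup \{0\}$. Pulling this open subgroup back along the isomorphism produces an open subgroup of $B$, and therefore of $A$, of the required shape.

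The only substantive ingredient here is the structure theorem for compactly generated locally compact abelian groups invoked in the second step; the remaining steps are routine manipulations with open subgroups, so I expect no real obstacle beyond locating and citing that result correctly. If one wished to avoid it, an alternative would be: observe that $A/A^\circ$ is totally disconnected and hence, by van Dantzig's theorem, has a compact open subgroup, yielding an open subgroup $L \le A$ with $L/A^\circ$ compact; use the structure of connected locally compact abelian groups to write $A^\circ \cong \Rb^a \times C$ with $C$ compact; and use that $\Rb^a$ is an injective object in the category of locally compact abelian groups, so that, being a closed subgroup of $L$, it is a topological direct factor and $L \cong \Rb^a \times L'$, where $L'$ is an extension of the compact group $L/A^\circ$ by the compact group $C$ and hence itself compact. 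I would go with the compactly generated reduction, as it is the shorter to write up.
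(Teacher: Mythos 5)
Your proof is correct: the paper itself gives no argument for this statement, merely citing \cite[Theorem~25]{Morris}, and your reduction to an open compactly generated subgroup followed by the structure theorem $B \cong \Rb^a \times \Zb^b \times K$ is precisely the standard derivation underlying that reference. No gaps.
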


Given a locally compact abelian group $G$, we say $g \in G$ is \defbold{compact} if $\cgrp{g}$ is compact.  Write $P(G)$ for the set of compact elements.  For a prime $p$, write $P_p(G)$ for the set of $g \in G$ such that $g^{p^n} \rightarrow 0$ as $n \rightarrow \infty$.  Note that $P_p(G) \le P(G)$ for all $p \in \Pb$. 

\begin{lem}\label{lem:PG_closed}
Let $G$ be a locally compact abelian group and let $p \in \Pb$.  Then $P(G)$ is a closed subgroup of $G$.  If $G$ is connected, then $P(G)$ is compact.  If $G$ is totally disconnected, then $P_p(G)$ is closed and $qP_p(G) = P_p(G)$ for all $q \in \Pb \setminus \{p\}$.
 \end{lem}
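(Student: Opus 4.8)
The plan is to deal with the three assertions by reducing, via Theorem~\ref{thm:open_subgroup}, to an open subgroup of $G$ of the form $\Rb^a \oplus K$ with $K$ compact, and then to produce inside $P(G)$ (and, in the totally disconnected case, inside $P_p(G)$) a compact open subgroup; closedness then follows from the standard fact that a subgroup of a Hausdorff topological group which is locally compact in the subspace topology is closed. First one checks that $P(G)$ and $P_p(G)$ are subgroups: if $g,h \in P(G)$ then $\cgrp{g} + \cgrp{h}$ is the image of the compact set $\cgrp{g} \times \cgrp{h}$ under addition, hence (as $G$ is abelian) a compact subgroup of $G$ containing $g - h$, so $\cgrp{g-h}$ is compact; and if $g,h \in P_p(G)$ then $p^n(g-h) = p^n g - p^n h \to 0$ by continuity of subtraction at the origin, so $g - h \in P_p(G)$.

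\emph{Closedness of $P(G)$ and the connected case.} Take an open subgroup $H \le G$ with $H \cong \Rb^a \oplus K$, $K$ compact. Since $\Rb^a$ has no nontrivial compact subgroup, $P(H) \cong K$ is compact. Because $H$ is open (hence closed) in $G$, for $g \in H$ the closures of $\langle g \rangle$ in $H$ and in $G$ coincide and are simultaneously compact or not; hence $P(G) \cap H = P(H)$, a compact open subgroup of $P(G)$. Thus $P(G)$ is locally compact, hence closed in $G$. If $G$ is connected, the open subgroup $H$ equals $G$, so $G \cong \Rb^a \oplus K$ and $P(G) \cong K$ is compact.

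\emph{The totally disconnected case.} If $G$ is totally disconnected, then the open subgroup $\Rb^a \oplus K$ is too, forcing $a = 0$ and $K$ profinite; so $G$ has a profinite open subgroup $K$. Writing $K = \prod_{\ell \in \Pb} K_{(\ell)}$ as the product of its pro-$\ell$ Sylow subgroups, one reads off $P_p(K) = K_{(p)}$: for $x = (x_\ell)_\ell$ one has $p^n x \to 0$ iff $p^n x_\ell \to 0$ for all $\ell$, which for $\ell \ne p$ forces $x_\ell = 0$ since multiplication by $p$ is a topological automorphism of the pro-$\ell$ group $K_{(\ell)}$, while every element of $K_{(p)}$ does satisfy $p^n x \to 0$ because each finite quotient of $K_{(p)}$ has $p$-power exponent. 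Hence $P_p(K) = K_{(p)}$ is a compact subgroup of $K$, and as before $P_p(G) \cap K = P_p(K)$ is a compact open subgroup of $P_p(G)$; so $P_p(G)$ is locally compact, hence closed in $G$.

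\emph{Divisibility away from $p$, and the main obstacle.} For $q \in \Pb \setminus \{p\}$ and $g \in P_p(G)$, the group $\cgrp{g}$ is compact (as $g \in P(G)$) and, being totally disconnected and topologically generated by one element, is procyclic profinite; since $p^n g \to 0$, every finite cyclic quotient of $\cgrp{g}$ has $p$-power order, so $\cgrp{g}$ is pro-$p$. Multiplication by $q$ is then a topological automorphism of $\cgrp{g}$ (bijective on each finite quotient since $\gcd(q,p) = 1$, and continuous), so $g \in q\,\cgrp{g} \subseteq q P_p(G)$; as $P_p(G)$ is a subgroup, this yields $q P_p(G) = P_p(G)$. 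The delicate points are the two closedness statements, since neither $P(G)$ nor $P_p(G)$ is visibly a kernel; everything hinges on exhibiting a compact open subgroup and then applying ``locally compact subgroups of Hausdorff groups are closed'', together with the elementary structure of procyclic profinite groups.
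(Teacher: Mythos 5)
Your proof is correct and follows essentially the same route as the paper: reduce via Theorem~\ref{thm:open_subgroup} to an open subgroup $\Rb^a \oplus K$ (resp.\ a profinite open subgroup in the totally disconnected case), identify $P(G)\cap U = K$ and $P_p(G)\cap U$ with the $p$-Sylow factor, and deduce closedness from the fact that a subgroup with a compact open subgroup is locally closed, hence closed. The only real variation is the proof that $qP_p(G)=P_p(G)$: you argue element-wise, showing each $\cgrp{g}$ is procyclic pro-$p$ and that multiplication by $q$ is an automorphism of it, whereas the paper passes from finite abelian groups to profinite ones by an inverse limit and writes $P_p(G)$ as a union of profinite open subgroups --- both arguments rest on the same fact that multiplication by $q$ is bijective on finite abelian $p$-groups.
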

 
\begin{proof}
To show that $P(G)$ is closed, it is enough to show that $P(G) \cap U$ is closed for some open subgroup $U$ of $G$.  By Theorem~\ref{thm:open_subgroup} we can take $U = \Rb^a \oplus K$ for $K$ compact.  We then see that $P(G) \cap U = K$, so $P(G)$ is closed; if $G$ is connected then $G = U$, so $P(G) = K$, and hence $P(G)$ is compact.

In a finite abelian group $F$, we see that $P_p(F)$ is the $p$-Sylow subgroup of $F$, and $qP_p(F) = P_p(F)$ for all $q \in \Pb \setminus \{p\}$.  The same is then true when $F$ is a profinite abelian group, by an inverse limit argument.  In particular, $P_p(F)$ is closed.
 
If $G$ is totally disconnected, then $G$ has a profinite open subgroup $U$ and we have $P_p(G) \cap U = P_p(U)$, so $P_p(G)$ is closed.  Given $q \in \Pb \setminus \{p\}$, for all profinite open subgroups $K$ of $P_p(G)$ we have $qK = K$; since $P_p(G)$ is the union of such subgroups, we have $qP_p(G) = P_p(G)$.
\end{proof}

If $A$ is a locally compact abelian group with $P_p(A) = A$, then we can naturally regard $A$ as a $\Zb_p$-module: specifically, given $\lambda_n \in \Zb$ converging to some $\lambda \in \Zb_p$, we define $\lambda a = \lim_{n \rightarrow \infty}\lambda_n a$.  The limit exists exactly because $a \in P_p(A)$; what is more, one sees that the multiplication map $\Zb_p \times A \rightarrow A$ is continuous.  In particular, for all $a \in A$ the closed subgroup $\cgrp{a}$ is the same as the cyclic $\Zb_p$-submodule $\Zb_p a$.

\subsection{Divisibility}

We say $a \in A$ is \defbold{divisible} if for all $n \in \Nb$ there exists an $n$-th root of $a$, that is, $b \in A$ such that $nb=a$, and \defbold{$p$-divisible} for $p \in \Pb$ if for all $n \ge 0$ there exists $b \in A$ such that $p^nb = a$.  A \defbold{divisible group} (respectively, \defbold{$p$-divisible group}) is one in which every element is divisible (respectively, $p$-divisible).

A subgroup $B$ of an abelian group $A$ is \defbold{essential} if $B$ intersects nontrivially every nontrivial subgroup of $A$.  Note that in the case that $A$ is torsion-free, $B$ is essential if and only if $A/B$ is a torsion group.

(See \cite[P.31]{Armacost}.)  Every locally compact abelian group $A$ has a largest divisible subgroup, $\divi(A)$, and a minimal divisible extension, $A^{\divi}$, such that $A^{\divi}$ is again a locally compact group into which $A$ embeds as an open subgroup.  The minimal divisible extension is characterized up to isomorphism by the properties that $A^{\divi}$ is divisible and $A$ is an open essential subgroup of $A^{\divi}$.  In particular, if $U$ is an open essential subgroup of $A$, then $A^{\divi} = U^{\divi}$.

If $A$ is torsion-free, then roots are unique (if they exist), so we can write $a/n$ for the $n$-th root of $a \in A$.  There are more direct descriptions of $\divi(A)$ and $A^{\divi}$ in this case.

\begin{lem}\label{lem:tf_div}
Let $A$ be a torsion-free abelian group.
\begin{enumerate}[(i)]
\item $\divi(A)$ is the set of all divisible elements of $A$.
\item $A^{\divi}$ is the tensor product $\Qb \otimes A$ of $\Zb$-modules, where $A$ is embedded via $a \mapsto (1,a)$ and $\Qb \otimes A$ is equipped with the topology extending that of $A$.
\item Given a continuous homomorphism $\theta: A \rightarrow B$ between torsion-free abelian groups, then $\theta$ has a unique extension to a continuous homomorphism $\theta^{\divi}: A^{\divi} \rightarrow B^{\divi}$.
\end{enumerate}
\end{lem}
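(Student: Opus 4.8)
The plan is to prove (i), then (ii), then (iii), using for (ii) and (iii) the characterisation of $A^{\divi}$ recalled above — the unique (up to isomorphism) divisible locally compact abelian group containing $A$ as an open essential subgroup — together with the fact, already noted, that a subgroup of a torsion-free abelian group is essential if and only if the quotient is torsion. For part (i), let $D$ be the set of divisible elements of $A$. First I would observe that $D$ is a subgroup: if $a,b\in D$ then for every $n\in\Nb$ the roots $a/n$ and $b/n$ exist and $n(a/n-b/n)=a-b$, so $a-b\in D$. The torsion-freeness hypothesis enters when showing that $D$ is itself divisible: given $a\in D$ and $m,k\in\Nb$, the root $a/(mk)$ exists, and uniqueness of $m$-th roots in the torsion-free group $A$ forces $k\cdot(a/(mk))=a/m$; hence $a/(mk)$ is a $k$-th root of $a/m$, and since $k$ was arbitrary, $a/m\in D$. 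Thus $D$ is a divisible subgroup of $A$; conversely every divisible subgroup of $A$ plainly consists of divisible elements and so lies in $D$, whence $D=\divi(A)$.

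For part (ii), since $A$ is torsion-free the map $a\mapsto 1\otimes a$ embeds $A$ into the $\Qb$-vector space $V:=\Qb\otimes A$. I would topologise $V$ by declaring $A$, with its given topology, to be an open subgroup; this yields a genuine group topology because $A$ is a subgroup of the abelian group $V$, it is Hausdorff because $A$ is Hausdorff and open, and it is locally compact because $A$ is locally compact and open. So $V$ is a locally compact abelian group containing $A$ as an open subgroup; it is divisible, being a $\Qb$-vector space; and $A$ is essential in $V$, since any nonzero $x\in V$ can be written $x=\frac1n\otimes a$ with $a\in A$ and $n\in\Nb$, and then $nx=a$ is a nonzero element of $\langle x\rangle\cap A$. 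By the uniqueness in the characterisation of the minimal divisible extension, there is an isomorphism $V\cong A^{\divi}$ restricting to the identity on $A$, that is, $A^{\divi}=\Qb\otimes A$.

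For part (iii), using (ii) I would write $A^{\divi}=\Qb\otimes A$ and $B^{\divi}=\Qb\otimes B$ and set $\theta^{\divi}:=\mathrm{id}_{\Qb}\otimes\theta$, a homomorphism restricting to $\theta$ on $A$. Uniqueness holds already for abstract homomorphisms: if $\phi,\psi\colon A^{\divi}\to B^{\divi}$ both extend $\theta$, then for $x\in A^{\divi}$ there is $n\in\Nb$ with $nx\in A$ (as $A^{\divi}/A$ is torsion), so $n(\phi-\psi)(x)=(\phi-\psi)(nx)=0$, and since $B^{\divi}$ is torsion-free we get $(\phi-\psi)(x)=0$. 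For continuity, recall that $A$ is open in $A^{\divi}$ and $B$ is open in $B^{\divi}$, so it suffices to check continuity of $\theta^{\divi}$ at $0$ against neighbourhoods $W$ of $0$ with $W\subseteq B$: for such $W$, $\theta^{-1}(W)$ is a neighbourhood of $0$ in $A$, hence in $A^{\divi}$, and $\theta^{-1}(W)\subseteq(\theta^{\divi})^{-1}(W)$, so $(\theta^{\divi})^{-1}(W)$ is a neighbourhood of $0$.

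I do not expect any serious obstacle. The only point that is topological rather than purely algebraic is the verification in (ii) that $\Qb\otimes A$ carries a locally compact group topology with $A$ open — and this is the elementary fact that a subgroup of an abelian topological group may always be declared open, after which the desired conclusion is immediate from the universal property of $A^{\divi}$ already recorded. Once (ii) is in place, (i) is purely formal and (iii) reduces the continuity question to the common neighbourhood filter at $0$ carried by $A$ and $B$.
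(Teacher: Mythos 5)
Your proposal is correct and follows essentially the same route as the paper: part (i) via closure of the divisible elements under taking roots using uniqueness of roots, part (ii) by realizing $A^{\divi}$ as $\Qb\otimes A$ topologized so that $A$ is open and invoking the characterization of the minimal divisible extension, and part (iii) by extending along pure tensors with uniqueness and continuity reduced to a neighbourhood of the identity. The only differences are that you spell out a few routine verifications (the subgroup property in (i), the uniqueness argument in (iii)) that the paper leaves implicit.
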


\begin{proof}
(i) The set $D$ of divisible elements is clearly a subgroup; it is enough to show that $D$ is also closed under taking roots.  Given $a \in D$ and $m,n \in \Nb$, then by uniqueness of roots, $a/(mn)$ is the $n$-th root of $a/m$; by letting $n$ vary we conclude that $a/m \in D$.

(ii) The tensor product $\Qb \otimes A$ consists of pure tensors: given $q_1,q_2,r_1,r_2 \in \Zb \setminus \{0\}$ and $a_1,a_2 \in A$, we have
\[
\left(\frac{q_1}{r_1},a_1 \right) + \left(\frac{q_2}{r_2},a_2 \right) = \left( \frac{1}{r_1r_2}, q_1r_2a_1 + q_2r_1a_2 \right). 
\]
From here it is easy to see that $\Qb \otimes A$ is divisible and torsion-free, that $(\Qb \otimes A)/A$ is torsion and that the map $a \mapsto (1,a)$ is an injective homomorphism.  Thus $A$ is essential in $\Qb \otimes A$.  We can then take the topology of $\Qb \otimes A$ such that $a \mapsto (1,a)$ is an open embedding, with the result that $\Qb \otimes A$ is the minimal divisible extension of $A$ in the topological sense.

(iii) We use tensor products as in (ii).  It is clear that the only possible extension of $\theta$ is to define $\theta^{\divi}(1/n,a) = (1/n,\theta(a))$ for all $n \in \Nb$ and $a \in A$. The uniqueness of roots ensures that this is a well-defined homomorphism from $\Qb \otimes A$ to $\Qb \otimes B$.  The continuity of $\theta^{\divi}$ follows from the fact that it is continuous in a neighbourhood of the identity.
\end{proof}

The discrete divisible abelian groups have been classified.

\begin{lem}[{\cite[Theorem~A.14]{HR}}]\label{lem:discrete_divisible}
Let $A$ be a discrete abelian group.  Then the following are equivalent:
\begin{enumerate}[(i)]
\item $A$ is divisible;
\item $A \cong \Qb^{(\kappa)} \oplus \bigoplus_{p \in \Pb}(\Zb[\frac{1}{p}]/\Zb)^{(\kappa_p)}$ for some cardinals $\kappa$ and $\kappa_p$.
\end{enumerate}
\end{lem}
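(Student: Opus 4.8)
The plan is to prove the two implications separately; (ii) $\Rightarrow$ (i) is routine and (i) $\Rightarrow$ (ii) carries all the content. For (ii) $\Rightarrow$ (i) I would observe that $\Qb$ and each Prüfer group $\Zb[\frac{1}{p}]/\Zb$ are divisible and that an arbitrary direct sum of divisible groups is divisible: an element of the sum is supported on finitely many coordinates, so a root can be extracted coordinatewise.

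For (i) $\Rightarrow$ (ii), the first step is to split off the torsion part. Writing $T$ for the torsion subgroup of $A$, a short computation shows $T$ is again divisible: if $t \in T$ has order $m$ and $n \in \Nb$, choose $s \in A$ with $ns = t$; then $mns = mt = 0$, so $s \in T$. Since divisible abelian groups are injective $\Zb$-modules (this follows from Baer's criterion, as $\Zb$ is a principal ideal domain), $T$ is a direct summand, say $A = T \oplus B$. Here $B \cong A/T$ is torsion-free and divisible, hence uniquely divisible and so a $\Qb$-vector space, giving $B \cong \Qb^{(\kappa)}$ with $\kappa = \dim_\Qb B$.

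It then remains to analyze the torsion divisible group $T$. By primary decomposition $T = \bigoplus_{p \in \Pb} T_p$, where $T_p$ is the $p$-primary component, and each $T_p$ is a divisible $p$-group. Fixing $p$, I would set $V = T_p[p] := \{x \in T_p : px = 0\}$, an $\Fb_p$-vector space, fix a basis $(x_i)_{i \in I}$, and put $\kappa_p = |I| = \dim_{\Fb_p} V$. Using divisibility of $T_p$, for each $i$ I would choose $x_i = x_i^{(1)}, x_i^{(2)}, \dots$ with $p\, x_i^{(n+1)} = x_i^{(n)}$; since $x_i$ has order $p$, an induction shows $x_i^{(n)}$ has order exactly $p^n$, so $P_i := \bigcup_n \langle x_i^{(n)} \rangle \cong \Zb[\frac{1}{p}]/\Zb$ via $x_i^{(n)} \mapsto p^{-n} + \Zb$. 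Then I would check that the sum $N = \bigoplus_{i \in I} P_i$ inside $T_p$ is direct (from a nontrivial finite relation $\sum y_i = 0$, multiplying by a suitable power of $p$ yields a nontrivial $\Fb_p$-linear relation among the $x_i$) and that $N = T_p$: as $N$ is divisible it is a direct summand, $T_p = N \oplus N'$, and $N'[p] \le V = N[p]$ forces $N'[p] = 0$, whence $N' = 0$ since a nonzero $p$-group contains an element of order $p$. Thus $T_p \cong (\Zb[\frac{1}{p}]/\Zb)^{(\kappa_p)}$, and assembling the pieces gives the asserted decomposition.

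The main obstacle is the structure of a divisible $p$-group: everything hinges on producing the Prüfer subgroups $P_i$ from a basis of the socle $V = T_p[p]$ and verifying that their span is direct and exhausts $T_p$, the latter again using injectivity of divisible groups. One could instead quote Matlis's classification of injective modules over a principal ideal domain, which specializes to precisely this statement, but I would prefer the self-contained socle argument (or simply defer to \cite{HR}). The remaining ingredients — the torsion/torsion-free splitting and the $\Qb$-vector space structure of $A/T$ — are entirely standard.
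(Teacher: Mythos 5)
Your proof is correct and complete. Note that the paper offers no proof of this lemma at all: it is quoted verbatim from the literature as \cite[Theorem~A.14]{HR} (it is the classical structure theorem for divisible abelian groups), so there is nothing in the paper to compare against except the citation. The argument you give --- splitting off the torsion subgroup via injectivity of divisible $\Zb$-modules (Baer's criterion), identifying the torsion-free part as a $\Qb$-vector space, primary-decomposing the torsion part, and building Pr\"ufer subgroups over a basis of the socle $T_p[p]$ before checking their sum is direct and exhausts $T_p$ --- is precisely the standard textbook proof, essentially the one in Hewitt--Ross and Fuchs. All the individual verifications you sketch (that the chosen root of a torsion element is itself torsion, the order computation for $x_i^{(n)}$, the multiply-by-$p^{n-1}$ trick for directness, and $N'[p]=0 \Rightarrow N'=0$ for a $p$-group) go through as stated.
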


The next two lemmas give some useful conditions under which $A$ is \defbold{densely divisible}, meaning that $\divi(A)$ is a dense subgroup of $A$.

\begin{lem}[{See \cite[P.28(b)]{Armacost}}]\label{lem:compact_divisible}
Let $A$ be a compact abelian group.  Then the following are equivalent:
\begin{enumerate}[(i)]
\item $A$ is divisible;
\item $A$ is densely divisible;
\item $A$ is connected.
\end{enumerate}
\end{lem}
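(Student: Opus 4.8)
The plan is to establish the cycle of implications $(i) \Rightarrow (ii) \Rightarrow (iii) \Rightarrow (i)$, with Pontryagin duality doing the real work in the last step. The implication $(i) \Rightarrow (ii)$ is immediate, since if $A$ is divisible then $\divi(A) = A$ is certainly dense.

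For $(ii) \Rightarrow (iii)$ I would argue contrapositively. If $A$ is not connected, then $A/A^{\circ}$ is a nontrivial profinite group and hence has a proper open subgroup of finite index; pulling this back along $A \to A/A^{\circ}$ yields a proper open (hence closed) subgroup $U \le A$ with $A/U$ finite and nontrivial. The image of $\divi(A)$ under the quotient map $A \to A/U$ is a divisible subgroup of a finite group, so it is trivial, i.e. $\divi(A) \subseteq U$. Since $U$ is closed and proper, $\overline{\divi(A)} \subseteq U \subsetneq A$, so $A$ is not densely divisible.

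For $(iii) \Rightarrow (i)$ I would pass to the discrete dual group $\widehat{A}$. For $n \in \Nb$ the subgroup $nA$ is closed, being the continuous image of the compact group $A$ under multiplication by $n$; and $nA$ is dense in $A$ precisely when the dual map, multiplication by $n$ on $\widehat{A}$, is injective. Hence $A$ is divisible if and only if $\widehat{A}$ is torsion-free. It then remains to invoke the standard duality fact that a compact abelian group is connected if and only if its dual is torsion-free, which closes the cycle (and in fact gives $(i) \Leftrightarrow (iii)$ directly). If one prefers to avoid quoting the connectedness--torsion-freeness duality, the same conclusion follows by noting that $A/nA$ is a compact abelian group of exponent dividing $n$, whose discrete dual is therefore a direct sum of finite cyclic groups, so that $A/nA$ is profinite; being also a continuous image of the connected group $A$, it must be trivial, whence $nA = A$.

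The only step that requires more than soft topology is $(iii) \Rightarrow (i)$: showing that connectedness forces divisibility genuinely uses Pontryagin duality (or an equivalent structural input about compact abelian groups), whereas the other two implications are elementary. One could of course also simply cite \cite[P.28(b)]{Armacost} for the whole equivalence.
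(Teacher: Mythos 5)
Your proposal is correct. Note, however, that the paper offers no proof of this lemma at all: it is stated purely as a citation to Armacost, so there is nothing internal to compare your argument against. What you have written is a sound, self-contained derivation: (i)$\Rightarrow$(ii) is trivial; your contrapositive for (ii)$\Rightarrow$(iii) correctly uses that $A/A^\circ$ is profinite, hence admits a nontrivial finite quotient in which the image of $\divi(A)$ must vanish; and for (iii)$\Rightarrow$(i) the duality bookkeeping is right --- $nA$ is closed by compactness, its density is dual to injectivity of multiplication by $n$ on $\widehat{A}$, so divisibility of $A$ is equivalent to torsion-freeness of $\widehat{A}$, and the connectedness--torsion-freeness duality (or your alternative via Pr\"ufer's theorem on bounded abelian groups, showing $A/nA$ is profinite and connected, hence trivial) closes the loop. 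This is essentially the standard textbook argument, so it is consistent with what the cited source does; the paper simply chose not to reproduce it.
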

 
 \begin{lem}\label{lem:pdense_divisible}
 Let $A$ be a torsion-free totally disconnected locally compact abelian group such that $P_p(A) = A$.  Then $\divi(A)$ is dense in $A$ if and only if $pA$ is dense in $A$.
 \end{lem}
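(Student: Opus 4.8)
The plan is to prove the two implications separately. If $\divi(A)$ is dense, then since $\divi(A)$ is a divisible subgroup we have $\divi(A) = p\,\divi(A) \subseteq pA$, so $pA$ contains a dense subgroup and is therefore dense; this direction uses neither torsion-freeness nor the hypothesis $P_p(A) = A$. All of the content is in the converse, so assume from now on that $pA$ is dense.

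For the converse the goal is to show that for every $a \in A$ and every compact open subgroup $V$ of $A$ there is some $d \in \divi(A)$ with $a - d \in V$; since $A$ is totally disconnected, the compact open subgroups form a neighbourhood basis at $0$, so this gives $a \in \overline{\divi(A)}$ for every $a$, i.e.\ $\divi(A)$ is dense. Fix $a$ and $V$. First, $pA + V$ is an open --- hence closed --- subgroup containing the dense set $pA$, so $pA + V = A$, and thus every element of $A$ can be written as $pb + v$ with $b \in A$, $v \in V$. Applying this repeatedly, I set $a_0 = a$ and choose inductively $a_{k+1} \in A$ and $v_{k+1} \in V$ with $a_k = p a_{k+1} + v_{k+1}$; then $a = p^n a_n + s_n$ for all $n$, where $s_n = \sum_{k=1}^{n} p^{k-1} v_k \in V$.

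Next I would show that $(s_n)$ converges in $V$. The point is that $p^n V$ eventually lies inside any prescribed neighbourhood of $0$: for an open subgroup $W \le V$, the finite quotient $V/W$ is a $p$-group (because $P_p(A) = A$ forces every element of $V$ to be $p$-power torsion modulo $W$), so it has some exponent $p^e$ and $p^n V \subseteq W$ whenever $n \ge e$. Since $s_m - s_n \in p^n V$ for $m > n$ and $V$ is complete, $(s_n)$ converges to some $s \in V$ with $s - s_n \in p^n V$ (the latter being compact, hence closed). Consequently $a - s = p^n a_n - (s - s_n) \in p^n A + p^n V = p^n A$ for every $n \ge 1$, so $a - s$ lies in $\bigcap_{n \ge 1} p^n A$ and is in particular $p$-divisible. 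Moreover, by Lemma~\ref{lem:PG_closed} we have $qA = q P_p(A) = P_p(A) = A$ for every prime $q \ne p$, so multiplication by any $m' \in \Nb$ coprime to $p$ is surjective on $A$; writing an arbitrary $m \in \Nb$ as $m = p^j m'$ with $p \nmid m'$ and taking successively a $p^j$-th root and an $m'$-th root of $a - s$, we conclude that $a - s$ is divisible, hence $a - s \in \divi(A)$ by Lemma~\ref{lem:tf_div}(i). Then $d = a - s$ is as required, and the proof is complete.

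The one step I expect to need care is the convergence of $(s_n)$, i.e.\ the fact that $p^n V \to \{0\}$ for a compact open subgroup $V$; this is exactly where the hypothesis $P_p(A) = A$ is used, and it cannot be dropped --- for instance $A = \Zb[1/p]$ with the discrete topology is torsion-free and totally disconnected with $pA = A$ dense, yet $\divi(A) = \{0\}$ is not dense. The remaining ingredients --- that an open dense subgroup is the whole group, the bookkeeping in the recursion, and the reduction of divisibility to $p$-divisibility --- are routine.
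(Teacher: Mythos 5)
Your proof is correct, but the middle step is genuinely different from the paper's. Both arguments share the same skeleton: show that for each $a\in A$ and each compact open subgroup $U$ there is a $p$-divisible element of $a+U$, and then upgrade $p$-divisibility to full divisibility using the fact that $qA=qP_p(A)=P_p(A)=A$ for primes $q\neq p$ (so that $a-s$ is divisible and lies in $\divi(A)$ by Lemma~\ref{lem:tf_div}(i)). Where you differ is in how the $p$-divisible element is produced. The paper observes that $A/U$ is a discrete divisible $p$-group, hence a direct sum of Pr\"ufer groups $\Zb[\frac{1}{p}]/\Zb$, and invokes a homomorphism-lifting theorem (\cite[Theorem~1.34]{HHR}) to lift a map $\Zb[\frac{1}{p}]\rightarrow A/U$ hitting $a+U$ to a map into $A$, whose image supplies the $p$-divisible representative. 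You instead build the representative by hand: iterate the identity $A=pA+V$ to get $a=p^na_n+s_n$ with $s_n\in V$, and use $P_p(A)=A$ to show $p^nV$ shrinks to $0$ so that $(s_n)$ converges in the compact group $V$ to some $s$ with $a-s\in\bigcap_n p^nA$. Your route is more elementary and self-contained --- it avoids both the lifting theorem and the classification of divisible discrete groups, and it isolates exactly where $P_p(A)=A$ enters (the convergence of the series), which your counterexample $\Zb[1/p]$ nicely confirms is essential --- at the cost of some explicit bookkeeping that the paper delegates to cited structure theory. All the individual steps check out: $pA+V$ is open hence closed hence all of $A$; $V/W$ is a finite $p$-group so $p^nV\subseteq W$ eventually; $p^nV$ is compact hence closed, giving $s-s_n\in p^nV$; and the factorization $m=p^jm'$ correctly reduces divisibility to $p$-divisibility plus surjectivity of multiplication by integers coprime to $p$.
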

 
 \begin{proof}
 If $\divi(A)$ is dense in $A$ then certainly $pA$ is dense in $A$, since $pA \ge p\divi(A) = \divi(A)$.  So we may suppose that $pA$ is dense in $A$.
 
Let $U$ be a compact open subgroup of $A$; the quotient $A/U$ is then a discrete $p$-group.  Since $pA$ is dense in $A$, we see that $p(A/U) = A/U$, that is, $A/U$ is $p$-divisible.  Since $A/U$ is a $p$-group, we conclude that ${A/U \cong (\Zb[\frac{1}{p}]/\Zb)^{(\kappa)}}$.  In particular, for all $a \in A$ there is a homomorphism ${f_a: \Zb[\frac{1}{p}] \rightarrow A/U}$ such that $a+U$ is contained in the image of $f_a$.  By \cite[Theorem~1.34]{HHR}, $f_a$ lifts to a homomorphism ${\tilde{f}_a: \Zb[\frac{1}{p}] \rightarrow A}$, such that there exists $\tilde{a} \in a+U$ in the image of $\tilde{f}_a$.  In particular, we see that the element $\tilde{a}$ is $p$-divisible in $A$.  Since $P_p(A) = A$, for all $n \ge 0$ the $p^n$-th root $\tilde{a}/p^n$ of $\tilde{a}$ belongs to a pro-$p$ subgroup of $A$; in particular, $\tilde{a}/p^n$ has an $m$-th root for all $m$ coprime to $p$.  Hence in fact $\tilde{a}$ is divisible in $A$, that is, $\tilde{a} \in \divi(A)$.  Thus $A = \divi(A)+U$.  Since $U$ can be made arbitrarily small, we conclude that $\divi(A)$ is dense in $A$.
\end{proof}
 
\subsection{Pontryagin duality}

Let $\Tb$ denote the multiplicative group of complex numbers of absolute value $1$.  Given a locally compact abelian group $A$, a \defbold{character} of $A$ is a continuous homomorphism from $A$ to $\Tb$.  The set $\widehat{A}$ of characters, equipped with pointwise multiplication and the compact-open topology, is called the \defbold{character group} or \defbold{dual} of $A$; it is again a locally compact abelian group.  We recall some standard facts about the relationship between $A$ and $\widehat{A}$; see for example \cite[Chapter 6]{HR} for details.

\begin{thm}[{Pontryagin--van Kampen; see \cite[Theorem 24.8]{HR}}]
There is a natural isomorphism of topological groups between a locally compact abelian group $A$ and the double dual of $A$, namely $a \in A$ corresponds to the character $\chi_a$ of $\widehat{A}$, where $\chi_a(f) = f(a)$ for all $f \in \widehat{A}$.
\end{thm}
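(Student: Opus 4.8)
The plan is to prove that the canonical evaluation homomorphism $\alpha_A\colon A \to \widehat{\widehat{A}}$, $a \mapsto \chi_a$, is a topological isomorphism by reducing, through the structure theory of locally compact abelian groups, to a short list of explicitly computable cases. That $\alpha_A$ is a continuous homomorphism is straightforward from the definitions of the compact-open topologies on $\widehat{A}$ and $\widehat{\widehat{A}}$, and $\alpha_A$ is injective precisely when the characters of $A$ separate its points. So the argument divides into three tasks: (a) separation of points by characters, (b) duality for the building blocks, and (c) assembly of the general case.

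First I would settle (b) by hand. Expanding continuous homomorphisms gives $\widehat{\Rb} \cong \Rb$ via $t \mapsto (x \mapsto e^{2\pi i t x})$, together with $\widehat{\Tb} \cong \Zb$, $\widehat{\Zb} \cong \Tb$, and $\widehat{F} \cong F$ for $F$ finite; since the dual of a finite direct product is the direct product of the duals, this establishes the theorem for every \emph{elementary} group $\Rb^n \oplus \Zb^m \oplus \Tb^k \oplus F$. I would also record the elementary observations that $\widehat{K}$ is discrete whenever $K$ is compact --- a sufficiently small neighbourhood of the trivial character contains no nontrivial one, because $\Tb$ has no small subgroups --- and dually that $\widehat{D}$ is compact whenever $D$ is discrete.

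Task (a) is the only genuinely non-formal point, and it reduces to a single analytic input: the characters of a \emph{compact} abelian group separate its points. This is the abelian case of the Peter--Weyl theorem and rests on Haar measure and the harmonic analysis of $L^2(K)$. Granting it, separation for a discrete group is pure algebra, since $\Tb$ is a divisible, hence injective, $\Zb$-module; and separation for an arbitrary locally compact abelian $A$ follows by choosing, via Theorem~\ref{thm:open_subgroup}, an open subgroup $B \cong \Rb^a \oplus K$ with $K$ compact: a nonzero element of $A$ either has nonzero image in the discrete quotient $A/B$, where the discrete case applies, or lies in $B$, where the $\Rb^a$ and $K$ factors supply a separating character, which then extends from $B$ to $A$ because $B$ is open and $\Tb$ is injective. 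This already gives injectivity of $\alpha_A$ for all $A$.

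It remains to carry out (c). For $K$ compact, write $\widehat{K}$ as the directed union of its finitely generated subgroups $\Gamma_i \cong \Zb^{a_i} \oplus F_i$; dualizing turns this into $\widehat{\widehat{K}} \cong \ilim \widehat{\Gamma_i}$, where each $\widehat{\Gamma_i} \cong \Tb^{a_i} \oplus \widehat{F_i}$ is compact and self-dual by the elementary case, and a limit argument combined with Peter--Weyl identifies $\alpha_K$ with the identity on $K$; the discrete case is the mirror statement, with passage to the dual interchanging the two decompositions. Finally, for general $A$ pick an open subgroup $B = \Rb^a \oplus K$ as above, so that $A/B$ is discrete. Dualizing the extension $0 \to B \to A \to A/B \to 0$ twice produces a commutative ladder comparing it with $0 \to \widehat{\widehat{B}} \to \widehat{\widehat{A}} \to \widehat{\widehat{A/B}} \to 0$; exactness of the dualized rows uses that characters extend from an open subgroup (injectivity of $\Tb$ again) and that the dual of an open embedding is a quotient map. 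Since $\alpha_B$ and $\alpha_{A/B}$ are topological isomorphisms by the previous steps, the five lemma in the category of topological groups --- with the usual care that the comparison maps be open onto their images, so that the conclusion is a homeomorphism and not merely a bijection --- shows that $\alpha_A$ is a topological isomorphism. The main obstacle is task (a): everything else is structure theory, explicit rank-one computation, and diagram chasing, but the separation of points by characters genuinely requires the machinery of Haar measure and harmonic analysis.
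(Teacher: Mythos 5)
The paper does not prove this statement: it is imported verbatim from Hewitt--Ross \cite{HR} as a classical black box, so there is no in-paper argument to compare yours against. What you have written is a correct outline of the standard structure-theoretic proof of Pontryagin--van Kampen duality --- essentially the proof found in the cited references themselves (\cite{HR}, and also \cite{Morris}). You correctly isolate the one genuinely analytic input (characters of a compact group separate points, via Haar measure and Peter--Weyl), handle separation for discrete groups by injectivity of $\Tb$ as a $\Zb$-module, verify duality for the elementary groups $\Rb^n \oplus \Zb^m \oplus \Tb^k \oplus F$ by direct computation, and assemble the general case from an open subgroup $B \cong \Rb^a \oplus K$ (Theorem~\ref{thm:open_subgroup}) via the double-dualized extension $0 \to B \to A \to A/B \to 0$ and a topological five lemma. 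This is a viable and complete plan.

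Two points to tighten if you were to write it out in full. First, $\widehat{\Gamma_i} \cong \Tb^{a_i} \oplus \widehat{F_i}$ is not \emph{self-dual} ($\Zb$ and $\Tb$ are exchanged under duality); what you actually need, and what the elementary case supplies, is that the evaluation map $\Gamma_i \to \widehat{\widehat{\Gamma_i}}$ is a topological isomorphism. Second, the two steps you compress the most are where the real work lives: surjectivity of the evaluation map $K \to \widehat{\Gamma_i}$ in the compact case (if its image were a proper closed subgroup, Peter--Weyl applied to the quotient yields a nontrivial character of $\widehat{\Gamma_i}$ killing the image, which by duality for the elementary group $\Gamma_i$ comes from a nonzero element of $\Gamma_i$ acting trivially on $K$ --- a contradiction), and the exactness-with-topology of the dualized rows in the ladder (the restriction $\widehat{A} \to \widehat{B}$ is a surjective open map with compact kernel $B^{\perp} \cong \widehat{A/B}$ precisely because $B$ is open and $\Tb$ is divisible). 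Neither is a gap in the plan, only in the write-up.
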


As such we will simply identify $A$ with its double dual.

\begin{lem}\label{lem:dual}Let $A$ be a locally compact abelian group.
\begin{enumerate}[(i)]
\item\emph{({\cite[Theorem~29]{Morris}})} $A$ is metrizable if and only if $\widehat{A}$ is $\sigma$-compact.  In particular, the class of abelian \lcsc groups is closed under taking duals.
\item\emph{({\cite[Theorem~23.17]{HR}})} $A$ is compact if and only if $\widehat{A}$ is discrete.
\item\emph{({Robertson \cite[Theorem~5.2]{Robertson}})} $A$ is torsion-free if and only if $\widehat{A}$ is densely divisible.
\end{enumerate}
\end{lem}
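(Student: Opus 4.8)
The plan is to obtain all three equivalences from Pontryagin--van Kampen duality; each is standard and recorded in the cited sources, so I will describe the arguments rather than rederive every duality fact. For (ii), I would run the classical argument: if $A$ is compact then the set $W$ of characters $\chi$ with $\chi(A) \subseteq \{z \in \Tb : \mathrm{Re}(z) > 0\}$ is a neighbourhood of the trivial character in the compact-open topology, while $\chi(A)$ is a subgroup of $\Tb$ lying in a proper arc and hence trivial; so $W = \triv$ and $\widehat A$ is discrete. Conversely, if $\widehat A$ is discrete, apply this to $\widehat A$ in place of $A$ and use $\widehat{\widehat A} \cong A$ to conclude $A$ is compact.

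For (i), I would reduce to the compact case via Theorem~\ref{thm:open_subgroup}. Fix an open subgroup $U = \Rb^a \oplus K$ of $A$ with $K$ compact; since $U$ is open, $A$ is metrizable if and only if $U$ is, hence if and only if $K$ is second-countable (the factor $\Rb^a$ being irrelevant). On the dual side, $U^{\perp} = \widehat{A/U}$ is a compact subgroup of $\widehat A$ with $\widehat A / U^{\perp} \cong \widehat U \cong \Rb^a \times \widehat K$, and $\widehat K$ is discrete by (ii); so $\widehat A$ is $\sigma$-compact if and only if $\widehat U$ is, hence if and only if $\widehat K$ is countable. The two conditions then coincide by the standard fact that a compact abelian group is second-countable exactly when its dual is countable, which one extracts from (ii) together with the observation that for discrete $D$ the compact group $\widehat D$ sits inside $\Tb^{D}$ with the product topology, and the latter is metrizable precisely when $D$ is countable.

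For (iii), the key point is that multiplication by $n$ on $A$ dualises to multiplication by $n$ on $\widehat A$, and that under duality the closure $\overline{\mathrm{im}\,\widehat f}$ of the image of the dual of a continuous endomorphism $f$ equals the annihilator $(\ker f)^{\perp}$. Applying this with $f$ equal to multiplication by $n$ gives $\overline{n\widehat A} = (A[n])^{\perp}$, where $A[n]$ is the subgroup of $n$-torsion elements of $A$ (closed, as the kernel of a continuous map); since characters separate the points of $A$, this annihilator is all of $\widehat A$ if and only if $A[n] = \triv$. Hence $\overline{n\widehat A} = \widehat A$ for every $n \in \Nb$ if and only if $A$ is torsion-free, and it remains to identify the former condition with ``$\widehat A$ is densely divisible'': the inclusion $\divi(\widehat A) \subseteq n\widehat A$ is immediate, and the converse --- that $n\widehat A$ being dense for all $n$ forces $\divi(\widehat A)$ to be dense --- is the one delicate step, which I would reduce, using Theorem~\ref{thm:open_subgroup} and the classification of discrete divisible groups (Lemma~\ref{lem:discrete_divisible}), to lifting divisibility across the quotient by an open subgroup, in the manner of the proof of Lemma~\ref{lem:pdense_divisible}, or else simply quote the corresponding result of Robertson \cite{Robertson}. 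I expect this last converse to be the only real obstacle; everything else is formal duality.
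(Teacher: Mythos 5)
The paper gives no proof of this lemma at all: each part is quoted verbatim from the cited literature (Morris for (i), Hewitt--Ross for (ii), Robertson for (iii)), so your sketch is doing strictly more work than the text does. That said, your reconstruction is essentially sound. Part (ii) is the standard argument and is correct as written. In part (i), the reduction to the compact case via Theorem~\ref{thm:open_subgroup} and the identification $\widehat{A}/U^{\perp} \cong \widehat{U} \cong \Rb^a \times \widehat{K}$ are fine, but the final equivalence ``$K$ compact second-countable iff $\widehat{K}$ countable'' is only half-justified by your parenthetical: the embedding of $\widehat{D}$ into $\Tb^{D}$ shows that a countable dual forces $K$ to be metrizable, whereas the converse direction needs a separate input, e.g.\ that distinct characters of a compact group are uniformly separated (by orthogonality in $L^2$), so that a separable $C(K)$ can contain only countably many of them. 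In part (iii), the identity $\overline{n\widehat{A}} = (A[n])^{\perp}$ correctly yields ``$A$ is torsion-free iff $n\widehat{A}$ is dense for all $n$'', and you rightly isolate the remaining step --- that density of every $n\widehat{A}$ forces density of $\divi(\widehat{A})$ --- as the genuinely nontrivial one; this is precisely the content of Robertson's theorem and does not follow formally. If you wanted to make that step self-contained, note that Lemma~\ref{lem:pdense_divisible} only covers the totally disconnected torsion-free case with $P_p(A) = A$, so you would also need Lemma~\ref{lem:compact_divisible} for the compact connected part and an argument splitting off the identity component; quoting Robertson, as the paper itself does, is the economical choice.
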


(See \cite[\S24]{HR}.) For every continuous homomorphism $\theta: A \rightarrow B$ of locally compact abelian groups there is a continuous homomorphism $\widehat{\theta}: \widehat{B} \rightarrow \widehat{A}$, called the \defbold{adjoint} or \defbold{dual} of $\theta$, given by $\widehat{\theta}(\chi) = \chi \circ \theta$.  The double adjoint recovers the original homomorphism, and the adjoint of an isomorphism is an isomorphism.  In particular, there is a natural isomorphism between $\Aut(A)$ and $\Aut(\widehat{A})$ given by $\theta \mapsto \widehat{\theta}^{-1}$.

\begin{lem}[{\cite[\S23.33]{HR}}]\label{lem:localdirect_dual}
Let $G = \bigoplus_{i \in I}(G_i,H_i)$ be a local direct product of locally compact abelian groups, where $H_i$ is a compact open subgroup of $G_i$ for all $i \in I$.  Then
\[
\widehat{G} = \bigoplus_{i \in I}(\widehat{G_i},H^{\perp}_i),
\]
where $H^{\perp}_i :=  \{ \chi \in \widehat{G_i} \mid \chi(H_i) = \{1\}\}$.
\end{lem}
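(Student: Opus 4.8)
This is essentially \cite[\S23.33]{HR}; the plan is to write down an explicit natural map and verify directly that it is a topological group isomorphism, leaning on Pontryagin--van Kampen duality and on compactness of the $H_i$. First I would recall the annihilator dictionary: for a closed subgroup $H$ of a locally compact abelian group $K$, the annihilator $H^{\perp} = \{\chi \in \widehat{K} : \chi(H) = \{1\}\}$ satisfies $H^{\perp} \cong \widehat{K/H}$ and $\widehat{K}/H^{\perp} \cong \widehat{H}$ via restriction of characters, so that $H$ compact open in $K$ forces $H^{\perp}$ compact open in $\widehat{K}$ by Lemma~\ref{lem:dual}(i),(ii). Hence the right-hand side $\Gamma := \bigoplus_{i \in I}(\widehat{G_i}, H_i^{\perp})$ is again a local direct product of the type appearing in the statement, with compact open ``base'' subgroup $\prod_{i \in I} H_i^{\perp}$.

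Next I would define $\Phi \colon \Gamma \to \widehat{G}$ by sending $(\chi_i)_{i \in I}$ to the function $G \to \Tb$, $(g_i)_{i \in I} \mapsto \prod_{i \in I} \chi_i(g_i)$. For any fixed $(g_i) \in G$ one has $g_i \in H_i$ for all but finitely many $i$, and $\chi_i \in H_i^{\perp}$ for all but finitely many $i$, so all but finitely many factors $\chi_i(g_i)$ equal $1$ and the product is a genuine homomorphism $G \to \Tb$. It is continuous because its restriction to the open subgroup $P := \prod_{i \in I} H_i$ is the finite product $\prod_{i \in F} \chi_i$ over $F := \{i : \chi_i \notin H_i^{\perp}\}$, hence continuous on $P$ and therefore on $G$. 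Clearly $\Phi$ is a homomorphism, and it is injective, since if $\Phi((\chi_i))$ is trivial then testing on tuples supported at a single index $j$ forces $\chi_j = 1$ for every $j$.

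For surjectivity, given $\psi \in \widehat{G}$ I would restrict $\psi$ along the inclusion $G_j \hookrightarrow G$ identifying $G_j$ with the tuples supported at $j$, obtaining $\psi_j \in \widehat{G_j}$; continuity at the identity shows the character $\psi|_P$ is trivial on $\prod_{i \notin F}H_i$ for some finite $F$, so $\psi_i \in H_i^{\perp}$ for all but finitely many $i$ and $(\psi_i)_i \in \Gamma$. Splitting an arbitrary $g \in G$ as a finite sum of single-index tuples plus an element of $P$ supported off a sufficiently large finite set then yields $\psi = \Phi((\psi_i))$. Finally, to see $\Phi$ is a homeomorphism I would restrict it to $\prod_i H_i^{\perp}$: this maps bijectively and continuously (a pointwise-convergence check, since the target is the character group of the discrete group $G/P$) onto $P^{\perp} \cong \widehat{G/P}$, which is compact (as $G/P$ is discrete) and open in $\widehat{G}$. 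A continuous bijective homomorphism of compact Hausdorff groups is a homeomorphism, so $\Phi$ is a homeomorphism on an open subgroup, and hence, being a bijective homomorphism, a topological isomorphism.

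The manipulations with $\Phi$ are routine; the point that needs care is the matching of \emph{topologies}. I expect the main obstacle to be the twofold use of compactness of the $H_i$: once to know $\psi|_P$ is supported on finitely many factors (so that $(\psi_i)$ actually lies in the local direct product, not merely the full product), and once to upgrade the continuous bijection $\prod_i H_i^{\perp} \to P^{\perp}$ to a homeomorphism without invoking an open mapping theorem --- the latter being desirable since, for uncountable $I$, the group $\widehat{G}$ need not be $\sigma$-compact.
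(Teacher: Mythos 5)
Your proof is correct. The paper itself gives no argument for this lemma --- it is quoted directly from Hewitt--Ross \cite[\S 23.33]{HR} --- and your direct verification (the explicit map $(\chi_i)_{i}\mapsto \prod_i\chi_i(\cdot)$, the no-small-subgroups argument showing $\psi_i\in H_i^{\perp}$ for almost all $i$, and the compactness trick identifying $\prod_i H_i^{\perp}$ with $P^{\perp}$ to settle the topology without an open mapping theorem) is essentially the standard proof found in that reference.
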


\subsection{Torsion-free compact abelian groups}

Torsion-free compact abelian groups have been classified: they are exactly the duals of divisible discrete abelian groups.

\begin{lem}[{\cite[Theorem 25.8]{HR}}]\label{lem:tf_compact}
The compact torsion-free abelian groups are exactly the topological groups of the form 
\[
\widehat{\Qb}^{\kappa} \oplus \prod_{p \in \Pb}\Zb^{\kappa_p}_p
\]
for some cardinals $\kappa$ and $\kappa_p$.
\end{lem}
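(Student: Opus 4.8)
The plan is to pass to Pontryagin duals and reduce to the already-classified case of discrete divisible abelian groups (Lemma~\ref{lem:discrete_divisible}). First I would note that, by Lemma~\ref{lem:dual}(ii), a locally compact abelian group $A$ is compact if and only if $\widehat{A}$ is discrete, and that a discrete abelian group is densely divisible if and only if it is divisible (its only dense subgroup is itself). Combining this with Lemma~\ref{lem:dual}(iii), the compact torsion-free abelian groups are exactly the duals of the discrete divisible abelian groups. By Lemma~\ref{lem:discrete_divisible}, every such discrete group has the form $B = \Qb^{(\kappa)} \oplus \bigoplus_{p \in \Pb}(\Zb[\tfrac{1}{p}]/\Zb)^{(\kappa_p)}$, so it remains only to compute $\widehat{B}$.

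The next step is to dualize a direct sum of discrete groups. Writing $\bigoplus_{i \in I}G_i = \bigoplus_{i \in I}(G_i,\{1\})$ as in the preliminaries and applying Lemma~\ref{lem:localdirect_dual} with $H_i = \{1\}$ (so $H_i^{\perp} = \widehat{G_i}$), one gets $\widehat{\bigoplus_{i \in I}G_i} = \prod_{i \in I}\widehat{G_i}$. Hence $\widehat{B} \cong \widehat{\Qb}^{\kappa} \oplus \prod_{p \in \Pb}\bigl(\widehat{\Zb[\tfrac{1}{p}]/\Zb}\bigr)^{\kappa_p}$, and the only remaining input is the standard identification $\widehat{\Zb[\tfrac{1}{p}]/\Zb} \cong \Zb_p$. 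For this I would use $\Zb_p = \ilim_n \Zb/p^n\Zb$ with the canonical projections as connecting maps: taking duals turns this inverse limit of finite groups into the direct limit of the duals along the adjoint maps, and the adjoint of the projection $\Zb/p^{n+1}\Zb \to \Zb/p^n\Zb$ is multiplication by $p$ from $\widehat{\Zb/p^n\Zb} \cong \Zb/p^n\Zb$ into $\widehat{\Zb/p^{n+1}\Zb} \cong \Zb/p^{n+1}\Zb$, so $\widehat{\Zb_p} \cong \dlim_n \Zb/p^n\Zb = \Zb[\tfrac{1}{p}]/\Zb$; by Pontryagin--van Kampen duality this gives $\widehat{\Zb[\tfrac{1}{p}]/\Zb} \cong \Zb_p$ (alternatively this may simply be quoted as well known). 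Substituting shows that every compact torsion-free abelian group is of the stated form.

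Finally I would check the converse: each $\Zb_p$ is compact and torsion-free; $\widehat{\Qb}$ is compact, being the dual of a discrete group, and torsion-free by Lemma~\ref{lem:dual}(iii) since $\Qb = \widehat{\widehat{\Qb}}$ is divisible and hence densely divisible; and both compactness (by Tychonoff) and torsion-freeness are inherited by arbitrary direct products, so $\widehat{\Qb}^{\kappa} \oplus \prod_{p \in \Pb}\Zb^{\kappa_p}_p$ is compact and torsion-free for all choices of cardinals. The only place requiring a little care is the bookkeeping for infinite direct sums and products in the duality step; everything else is routine, and no second-countability hypothesis is needed.
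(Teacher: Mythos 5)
Your proof is correct. Note that the paper does not actually prove this statement: it is quoted directly from Hewitt--Ross (Theorem 25.8), so there is no internal argument to compare against. What you have written is the standard duality derivation that underlies that reference, and you have assembled it correctly from results already quoted in the paper: compactness and torsion-freeness dualize to discreteness and (dense, hence genuine) divisibility via Lemma~\ref{lem:dual}, the classification of discrete divisible groups is Lemma~\ref{lem:discrete_divisible}, the dual of a direct sum is the product of the duals via Lemma~\ref{lem:localdirect_dual} with $H_i = \{1\}$, and the identification $\widehat{\Zb[\tfrac{1}{p}]/\Zb} \cong \Zb_p$ is handled correctly by dualizing the inverse limit $\Zb_p = \ilim_n \Zb/p^n\Zb$. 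Your observation that no second-countability is needed is also right, which matches the lemma being stated for arbitrary cardinals.
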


Given a torsion-free abelian pro-$p$ group $U$ and a subset $X = \{e_i \mid i \in I\}$ of $U$, we say $X$ is a \defbold{basis} for $U$ if $U$ is freely generated by $X$ as an abelian pro-$p$ group; in other words, $X$ converges to $0$ in $U$ (that is, every neighbourhood of $0$ in $U$ contains all but finitely many elements of $X$), and every element of $U$ is expressible as a sum $\sum_{i \in I}\lambda_i e_i$ with unique coefficients $\lambda_i \in \Zb_p$ (here the infinite sum makes sense because $X$ converges to $0$).  We can then write $U = \prod_{i \in I}\cgrp{e_i}$.  Then \defbold{rank} of $U$ (as a pro-$p$ group) is the cardinality of some basis for $U$.  A subgroup $H$ of $U$ is \defbold{pure} if for all $n \in \Nb$ we have $H \cap nU = nH$; in other words, every element of $H$ with an $n$-th root in $U$ has an $n$-th root in $H$.
 
 \begin{lem}\label{lem:Zp_direct}
 Let $U$ be a torsion-free abelian pro-$p$ group.
 \begin{enumerate}[(i)]
 \item $U$ is free in the category of abelian pro-$p$ groups, in other words, $U$ has a basis.  In particular, $U$ is second-countable if and only if $U \cong \Zb^{\kappa}_p$ for some $\kappa \le \aleph_0$.
 \item Every closed pure subgroup of $U$ is a direct summand.
 \item Any two bases for $U$ are conjugate under the action of $\Aut(U)$; in particular, the rank is well-defined.
 \item Every $\Zb_p$-submodule of $U$ of finite rank is closed.
 \end{enumerate}
 \end{lem}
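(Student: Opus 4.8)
The plan is to identify $U$ with a product $\Zb_p^{\kappa}$ of copies of $\Zb_p$, and then to argue throughout with bases, Pontryagin duality and Nakayama's lemma. For (i): since $U$ is pro-$p$ it is compact, so $\widehat{U}$ is discrete (Lemma~\ref{lem:dual}(ii)); since $U$ is torsion-free, $\widehat{U}$ is densely divisible (Lemma~\ref{lem:dual}(iii)), hence divisible as it is discrete; and $\widehat{U}$ is a $p$-group, because every continuous character of a pro-$p$ group has finite image of $p$-power order (the image, being a quotient of a profinite group by a closed subgroup, is a profinite, hence finite, subgroup of $\Tb$, and a finite quotient of a pro-$p$ group has $p$-power order). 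By Lemma~\ref{lem:discrete_divisible} we get $\widehat{U} \cong (\Zb[1/p]/\Zb)^{(\kappa)}$ for some cardinal $\kappa$; dualising, and using $\widehat{\Zb[1/p]/\Zb} \cong \Zb_p$ together with the fact that the dual of a direct sum of discrete groups is the direct product of the duals (which follows from Lemma~\ref{lem:localdirect_dual} with trivial $H_i$), we obtain $U \cong \Zb_p^{\kappa}$. The coordinate elements of $\Zb_p^{\kappa}$ converge to $0$ and form a basis in the stated sense, and $\Zb_p^{\kappa}$ is compact and is metrizable --- equivalently, being compact, second-countable --- if and only if $\kappa \le \aleph_0$; this gives both assertions of (i).

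Part (ii) is the step I expect to be the \emph{main obstacle}. Let $H \le U$ be closed and pure. First, $U/H$ is torsion-free: as a pro-$p$ group it has no torsion of order prime to $p$, and if $p^{k}(u+H) = 0$ then $p^{k}u \in H \cap p^{k}U = p^{k}H$ by purity, forcing $u \in H$ since $U$ is torsion-free. Now dualise the exact sequence $0 \to H \to U \to U/H \to 0$ to obtain $0 \to H^{\perp} \to \widehat{U} \to \widehat{H} \to 0$, where $H^{\perp} \cong \widehat{U/H}$. As $U/H$ is compact and torsion-free, $\widehat{U/H}$ is discrete and divisible, hence an injective abelian group, so (everything in sight being discrete) the inclusion $H^{\perp} \hookrightarrow \widehat{U}$ admits a retraction and $\widehat{U} = H^{\perp} \oplus C$ for some $C$. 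Dualising back, using $(H^{\perp})^{\perp} = H$ and the fact that Pontryagin duality sends internal direct sums to internal direct sums --- the natural map $H \times C^{\perp} \to U$ being a continuous bijective homomorphism of compact groups, hence an isomorphism --- we conclude $U = H \oplus C^{\perp}$ as topological groups. (Alternatively, $U/H \cong \Zb_p^{\kappa'}$ is projective in the category of abelian pro-$p$ groups, so the quotient map $U \to U/H$ splits.) The points needing care here are that the conditions ``$H$ is pure in $U$'', ``$U/H$ is torsion-free'' and ``$\widehat{U/H}$ is divisible'' are equivalent --- purity is genuinely needed, since $p\Zb_p$ is closed but not a direct summand of $\Zb_p$ --- and that the decomposition produced by dualising is a \emph{topological} direct sum.

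For (iii), the substantive point is that the rank is well-defined. For any basis $(e_i)_{i \in I}$ we have $U = \prod_{i \in I}\cgrp{e_i}$ with each $\cgrp{e_i} \cong \Zb_p$, so $\widehat{U} \cong \bigoplus_{i \in I}\widehat{\cgrp{e_i}} \cong (\Zb[1/p]/\Zb)^{(I)}$; hence the $p$-torsion subgroup of $\widehat{U}$ is an $\Fb_p$-vector space of dimension $|I|$, and since $\widehat{U}$ does not depend on the chosen basis, $|I|$ is an invariant of $U$. Conjugacy of bases is then formal: a bijection between two bases of $U$ extends, by freeness (both bases converge to $0$), to mutually inverse continuous endomorphisms of $U$, and hence to an automorphism of $U$ carrying one basis to the other.

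For (iv), let $M \le U$ be a $\Zb_p$-submodule of rank $n$. The key step is that $\overline{M}$ still has rank $n$. Indeed, if $v_0, \dots, v_n \in \overline{M}$ were $\Qb_p$-linearly independent, we could choose $n+1$ coordinates on which the corresponding projection $\pi \colon U \to \Zb_p^{n+1}$ keeps them independent (picking a nonzero $(n+1) \times (n+1)$ minor); then $\det(\pi v_0, \dots, \pi v_n) \ne 0$, and as this determinant is a continuous function and $M$ is dense in $\overline{M}$, we could replace each $v_i$ by a nearby element of $M$ without making the determinant vanish, thereby producing $n+1$ linearly independent elements of $M$ --- contradicting $\rk M = n$. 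Thus $\overline{M}$ is a closed torsion-free abelian pro-$p$ subgroup of $U$ of rank $n$, so $\overline{M} \cong \Zb_p^{n}$ by (i) (a torsion-free abelian pro-$p$ group of finite $\Zb_p$-rank $n$ must be $\Zb_p^{n}$, since $\Zb_p^{\lambda}$ has infinite rank for $\lambda$ infinite). Finally, choosing $m_1, \dots, m_n \in M$ whose images form an $\Fb_p$-basis of the finite quotient $\overline{M}/p\overline{M}$, Nakayama's lemma yields $\Zb_p m_1 + \dots + \Zb_p m_n = \overline{M}$, so $M = \overline{M}$ is closed.
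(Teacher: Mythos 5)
Your proof is correct, but it diverges from the paper's in two places, most substantially in part (iv). For (i) you re-derive by hand, via duality and Lemma~\ref{lem:discrete_divisible}, what the paper simply reads off from Lemma~\ref{lem:tf_compact}; the content is the same. For (ii), the paper's proof is precisely your parenthetical alternative: purity forces $U/H$ to be torsion-free, hence free by (i), hence projective in the category of abelian pro-$p$ groups, so the quotient map $U \to U/H$ splits. Your primary route---dualising to $0 \to H^{\perp} \to \widehat{U} \to \widehat{H} \to 0$ and splitting off $H^{\perp} \cong \widehat{U/H}$ using injectivity of divisible discrete groups---is the Pontryagin dual of the same argument and works equally well; you are also right that purity enters exactly in showing $U/H$ is torsion-free, a step the paper leaves implicit, so making it explicit is a genuine improvement in readability. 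Part (iii) matches the paper. The real difference is (iv): the paper's argument is a one-liner---a finite-rank submodule is $\Zb_p v_1 + \dots + \Zb_p v_n$, each $\Zb_p v_i = \cgrp{v_i}$ is compact, and a sum of finitely many compact subgroups is compact, hence closed---whereas you pass to $\ol{M}$, bound its rank by a determinant/minor continuity argument, identify $\ol{M} \cong \Zb_p^n$, and finish with Nakayama. Your version is valid (and the minor argument is a pleasant way to see that taking closures cannot raise the $\Qb_p$-rank, which also reconciles the ``finitely generated'' and ``finite linear-independence rank'' readings of the hypothesis), but all of that machinery is avoidable: the finitely generated submodule $\Zb_p m_1 + \dots + \Zb_p m_n$ you write down in your last step is already compact for the paper's reason, which closes the argument immediately.
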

 
 \begin{proof}
(i) We have $U \cong \Zb^\kappa_p$ for some cardinal $\kappa$ by Lemma~\ref{lem:tf_compact}.  We see that $\Zb^\kappa_p$ is second-countable if and only if $\kappa \le \aleph_0$.

(ii) Given a closed pure subgroup $V$, then $U/V$ is a free abelian pro-$p$ group by (i), and hence projective in the category of abelian pro-$p$ groups (see \cite[Proposition~5.2.2]{Wilson}).  Thus the quotient map $U \rightarrow U/V$ splits, in other words, $V$ is a direct summand.

(iii) If $U \cong \Zb^{\kappa}_p$ then $\widehat{U} \cong (\Zb[\frac{1}{p}]/\Zb)^{(\kappa)}$, and we can recover $\kappa$ as the largest size of an independent set in $(\Zb[\frac{1}{p}]/\Zb)^{(\kappa)}$; thus the rank is well-defined.  The fact that bases are conjugate then follows by the standard universal property argument for free objects; see for instance \cite[Proposition~5.1.2]{Wilson}.
 
(iv) A $\Zb_p$-submodule $V$ of $U$ of finite rank is one of the form $\{\sum^n_{i=1}\lambda_iv_i \mid \lambda_i \in \Zb_p\}$ for some $v_1,\dots,v_n \in V$.  In other words, $V$ is the sum in $U$ of the compact subgroups $\Zb_p v_i = \cgrp{v_i}$ for $1 \le i \le n$.  Since a sum of finitely many compact subgroups is compact, it follows that $V$ is compact and hence closed.
 \end{proof}

\section{Five families of abelian groups}\label{sec:families}

We consider five families of abelian \lcsc groups.  For each family, we give a characterization in terms of standard properties of locally compact groups; in no case do we need to include ``topologically characteristically simple'' as one of the properties.  However, all the families indeed consist of topologically characteristically simple groups, which we demonstrate by giving examples of \lcsc groups $G$ whose monolith is the abelian group of interest.  In most cases $G$ will be compactly generated; for the exceptions we explain why no such $G$ exists.

\subsection{Elementary abelian groups}

A group is \defbold{elementary abelian} if it is abelian and has prime exponent.  The elementary abelian \lcsc groups have a well-known structure.

\begin{prop}[{see e.g. \cite[Corollary~3.26]{HHR}}]\label{prop:elab_characterization}
Let $A$ be a nontrivial abelian \lcsc group and let $p \in \Pb$.  Then the following are equivalent:
\begin{enumerate}[(i)]
\item $A \cong C^{\kappa}_p \oplus C^{(\kappa')}_p$ for some $\kappa \in \Nb \cup \{\aleph_0\}$ and $\kappa' \in \{0,\aleph_0\}$;
\item $A$ is isomorphic to one of: $C^{\aleph_0}_p$, $C^{(\aleph_0)}_p$, $C^{\aleph_0}_p \oplus C^{(\aleph_0)}_p$, or $C^n_p$ for some $n \in \Nb$;
\item $A$ has exponent $p$.
\end{enumerate}
\end{prop}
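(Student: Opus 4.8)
The plan is to prove the cycle $(\mathrm{i}) \Rightarrow (\mathrm{iii}) \Rightarrow (\mathrm{i})$ and to note that $(\mathrm{i})$ and $(\mathrm{ii})$ differ only by a routine combinatorial simplification. The implication $(\mathrm{i}) \Rightarrow (\mathrm{iii})$ is immediate, as every group listed in $(\mathrm{i})$ is annihilated by $p$. For $(\mathrm{i}) \Leftrightarrow (\mathrm{ii})$: as $\kappa$ ranges over $\Nb \cup \{\aleph_0\}$ and $\kappa'$ over $\{0,\aleph_0\}$, the case $\kappa' = 0$ gives $C^n_p$ $(n \in \Nb)$ and $C^{\aleph_0}_p$, while for $\kappa' = \aleph_0$ one uses $C^n_p \oplus C^{(\aleph_0)}_p \cong C^{(\aleph_0)}_p$ (both being countably infinite discrete $\Fb_p$-vector spaces) to collapse the cases with $\kappa$ finite, leaving only $C^{(\aleph_0)}_p$ and $C^{\aleph_0}_p \oplus C^{(\aleph_0)}_p$; this recovers exactly the four groups of $(\mathrm{ii})$.

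The substance is thus $(\mathrm{iii}) \Rightarrow (\mathrm{i})$. Assume $A$ has exponent $p$, so $A$ is an $\Fb_p$-vector space carrying an \lcsc group topology. First I would show $A$ is totally disconnected with a compact open subgroup: by Theorem~\ref{thm:open_subgroup}, $A$ has an open subgroup $\Rb^a \oplus K$ with $K$ compact, and $a = 0$ since $A$ has no element of infinite order; the identity component $A^{\circ} \le K$ is then a compact connected abelian group of exponent $p$, which must be trivial, because a nontrivial compact connected abelian group is divisible by Lemma~\ref{lem:compact_divisible} whereas a nontrivial group of exponent $p$ admits no $p$-th roots of its nonzero elements. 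Hence $A$ is totally disconnected and $K$ is a profinite open subgroup.

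Next I would identify the two pieces $K$ and $A/K$ and split the extension between them. The dual $\widehat{K}$ is a discrete abelian group of exponent $p$, hence $\widehat{K} \cong C^{(\mu)}_p$ for some cardinal $\mu$; dualizing via Lemma~\ref{lem:localdirect_dual} gives $K \cong C^{\mu}_p$, and since $K$ is second-countable we get $\mu \le \aleph_0$. Likewise $A/K$ is discrete abelian of exponent $p$, and it is countable since $K$ is open in the second-countable group $A$; so $A/K \cong C^{(\nu)}_p$ with $\nu \le \aleph_0$. Because $A/K$ is a free $\Fb_p$-module, the quotient map $A \to A/K$ has an abstract group-theoretic section $s$ (lift a basis), which is automatically continuous as $A/K$ is discrete; then $(k,x) \mapsto k + s(x)$ is a continuous bijective homomorphism from the $\sigma$-compact locally compact group $K \times A/K$ onto $A$, hence an isomorphism of topological groups by the open mapping theorem. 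So $A \cong C^{\mu}_p \oplus C^{(\nu)}_p$ with $\mu,\nu \le \aleph_0$, and a final bookkeeping step puts this in the form of $(\mathrm{i})$: if $\mu$ is finite, absorb it into the direct-sum factor when $\nu = \aleph_0$ (using $C^{\mu}_p \oplus C^{(\aleph_0)}_p \cong C^{(\aleph_0)}_p$) or keep a finite power when $\nu$ is finite; if $\mu = \aleph_0$ and $\nu$ is finite, use $C^{\aleph_0}_p \oplus C^{(\nu)}_p \cong C^{\aleph_0}_p$ (the two have isomorphic duals). In every case $A$ is isomorphic to a group in $(\mathrm{i})$.

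I expect the main obstacle to be the topological splitting of the extension $K \to A \to A/K$: one must observe that the relevant abstract section exists (freeness of the quotient as an $\Fb_p$-module), that it is continuous for free (the quotient is discrete), and then invoke the open mapping theorem in the second-countable, hence $\sigma$-compact, setting. The only other mild point is the structural description of profinite abelian groups of exponent $p$, which I would extract cleanly from Pontryagin duality (Lemma~\ref{lem:dual} and Lemma~\ref{lem:localdirect_dual}) rather than from an ad hoc inverse-limit argument. This is in essence the proof of \cite[Corollary~3.26]{HHR}.
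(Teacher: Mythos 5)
Your proof is correct. Note that the paper does not actually prove this proposition---it is stated with a citation to \cite[Corollary~3.26]{HHR} and used as a known fact---so there is no internal argument to compare against; your self-contained proof (reduce to $K$ profinite via Theorem~\ref{thm:open_subgroup} and the incompatibility of divisibility with exponent $p$, identify $K$ and $A/K$ by Pontryagin duality, split the extension using freeness of the discrete quotient as an $\Fb_p$-vector space plus the open mapping theorem) is the standard route and all the steps check out, including the bookkeeping identifications such as $C^{n}_p \oplus C^{(\aleph_0)}_p \cong C^{(\aleph_0)}_p$, which the paper's stated conventions already license.
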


All elementary abelian \lcsc $p$-groups occur as minimal closed normal subgroups of soluble compactly generated \lcsc groups; to show this, we first prove a lemma about wreath products.

Let $F$ be a finite group.  The restricted wreath product $F \wr \Zb$ is a finitely generated discrete group of the form $F^{(\Zb)} \rtimes \Zb$.  The unrestricted wreath product $F \Wr \Zb$ instead has base $F^{\Zb}$ with the product topology.  In each case we think of elements of the base as functions from $\Zb$ to $F$ in the obvious manner.

\begin{lem}\label{lem:shift_monolith}Let $F$ be a finite group with a noncentral monolith $M$.   Then
\[
\Mon(F \Wr \Zb) = M^{\Zb}; \quad \Mon(F \wr \Zb) = M^{(\Zb)}.
\]
\end{lem}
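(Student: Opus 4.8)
The plan is to prove both identities in parallel, since the arguments for the restricted and unrestricted cases differ only in which topology one puts on the base group. Write $W = F \Wr \Zb = F^{\Zb} \rtimes \Zb$ (and $W = F \wr \Zb = F^{(\Zb)} \rtimes \Zb$ respectively), with $\Zb$ acting by the shift. First I would record the elementary fact that $M^{\Zb}$ (resp. $M^{(\Zb)}$) is a closed normal subgroup of $W$: it is shift-invariant because $M$ is characteristic (indeed the monolith) in $F$, and it is normalized by the base $F^{\Zb}$ because $M$ is normal in $F$; closedness is immediate since $M$ is a finite (hence closed) subgroup of $F$ and a product of closed subgroups is closed in the product topology. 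So $\Mon(W) \le M^{\Zb}$ once we know $M^{\Zb}$ is itself a minimal closed normal subgroup, and the whole statement reduces to two things: (a) every nontrivial closed normal subgroup of $W$ contains $M^{\Zb}$ (resp. $M^{(\Zb)}$), and (b) $M^{\Zb}$ (resp. $M^{(\Zb)}$) is contained in every nontrivial closed normal subgroup, i.e. it is the monolith. Statement (a) gives both that $M^{\Zb}$ is minimal (any nontrivial closed normal subgroup inside it equals it) and that it is the intersection of all such subgroups.

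The core of the proof is therefore (a): let $N \ne \triv$ be a closed normal subgroup of $W$. The key step is to produce a nontrivial element of $N$ lying in the base, after which commutator and shift manipulations finish the job. To do this I would take $1 \ne n \in N$; writing $n = (f, k)$ with $f$ in the base and $k \in \Zb$, if $k = 0$ we already have a nontrivial base element (assuming $f \ne 1$; if $f = 1$ then $n$ is trivial), and if $k \ne 0$ I would pass to a commutator: for any base element $g$, the commutator $[n, g] = n g n\inv g\inv$ lies in $N$ and, a short computation shows, equals $({}^{n}g \cdot g\inv)$ in the base, where ${}^n g$ denotes the shift of $g$ by $k$. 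Choosing $g$ supported at a single coordinate $i$ with value a nontrivial element of $F$, the element ${}^n g \cdot g\inv$ is nontrivial (it is supported on the two distinct coordinates $i$ and $i+k$, with value the chosen element at one and its inverse at the other) and lies in $N$. So in all cases $N$ contains a nontrivial element $h$ of the base. Now I would look at a coordinate $i$ where $h$ has nontrivial value $h_i \in F$; conjugating $h$ by base elements supported at $i$ and taking commutators within the $i$-th coordinate, using that the normal closure of $h_i$ in $F$ contains $M$ (since $M = \Mon(F)$ is contained in every nontrivial normal subgroup of $F$, in particular in the normal closure of $h_i$), I can arrange that $N$ contains an element supported at the single coordinate $i$ with value any prescribed $m \in M$. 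Finally, conjugating such an element by powers of the shift generator moves it to every coordinate, so $N$ contains the subgroup generated by all coordinate copies of $M$, which is $M^{(\Zb)}$. In the restricted case this already gives $N \ge M^{(\Zb)}$; in the unrestricted case I then take the closure: $N$ is closed and $M^{(\Zb)}$ is dense in $M^{\Zb}$, so $N \ge M^{\Zb}$. This establishes (a).

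For (b) — that $M^{\Zb}$ (resp. $M^{(\Zb)}$) is actually contained in $\Mon(W)$, not merely equal to it on the nose — note that (a) shows every nontrivial closed normal subgroup contains $M^{(\Zb)}$, and in the unrestricted case its closure $M^{\Zb}$; since $M^{\Zb}$ (resp. $M^{(\Zb)}$) is itself such a subgroup, it is the smallest one, hence equal to $\Mon(W)$. The one remaining point is to check that $M^{\Zb}$ (resp. $M^{(\Zb)}$) is genuinely normal and closed in $W$, which I noted at the start, so no nontrivial closed normal subgroup is missed.

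I expect the main obstacle to be the bookkeeping in the step that extracts from an arbitrary nontrivial element of $N$ a nontrivial base element and then a single-coordinate element with value in $M$: one must be careful that the shift $k$ could be nonzero (forcing the commutator trick), that the chosen coordinate $i$ where $h$ is nontrivial may interact with its shift $i+k$, and that conjugation/commutator operations inside a single coordinate really do generate all of $M$ there — this last point is exactly where the hypothesis $M = \Mon(F)$ (equivalently, $M$ lies in every nontrivial normal subgroup of $F$) is used, together with the hypothesis that $M$ is noncentral, which guarantees the single-coordinate commutators are nontrivial rather than collapsing. The passage from the dense subgroup $M^{(\Zb)}$ to its closure $M^{\Zb}$ in the unrestricted case is routine given that $N$ is closed.
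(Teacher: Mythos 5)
Your proposal is correct and follows essentially the same route as the paper's proof: extract a nontrivial base element via a commutator when the shift component is nonzero, localize to a single coordinate by commutating with a one-coordinate element (which works because the noncentrality of the monolith forces $Z(F)=\{1\}$), pull in all of $M$ at that coordinate via the monolith property, spread by the shift, and pass to the closure in the unrestricted case. The only cosmetic difference is that what lands in $N$ at a single coordinate is the normal closure of a commutator $[a,h_i]$ rather than of $h_i$ itself, but either way the monolith property gives $M$, so nothing is affected.
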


\begin{proof}
Let $G$ be $F \Wr \Zb$ or $F \wr \Zb$.  Given $a \in F$, let $\delta_{i,a}$ be the function from $\Zb$ to $F$ such that $\delta_{i,a}(i) = a$ and $\delta_{i,a}(j) = 0$ for $j \neq i$.  Note that the hypotheses on $F$ ensure that $F$ has trivial centre.

Let $N$ be a nontrivial closed normal subgroup of $G$.  Then $N$ contains some nontrivial element $(f,i)$ where $f$ is in the base and $i \in \Zb$.  If $i \neq 0$ we see that $N$ contains all elements of $F^{(\Zb)}$ of the form $\delta_{j,a} - \delta_{j-i,a}$, so we may assume $f$ is nontrivial.  Then $f(j) \neq 1$ for some $j \in \Zb$, and hence $N$ contains the nontrivial element $\delta_{j,[a,f(j)]}$ of $F^{(\Zb)}$, where $a$ is any element of $F \setminus \CC_F(f(j))$.  Thus $N$ contains an element of the form $\delta_{j,b}$ where $b \in F \setminus \{1\}$.  By conjugating in the $j$-th copy of $F$, we deduce that $N$ contains $\delta_{j,c}$ where $c$ ranges over a nontrivial normal subgroup of $F$, and in particular for all $c \in M$.  By applying the shift map, in fact $N$ contains $\delta_{k,c}$ for all $k \in \Zb$ and $c \in M$, that is, $M^{(\Zb)} \le N$.  Since $N$ is closed we have $\ol{M^{(\Zb)}} \le N$.  Conversely, it is easy to see that $\ol{M^{(\Zb)}}$ is itself a nontrivial closed normal subgroup of $G$.  Thus $\Mon(G) = \ol{M^{(\Zb)}}$.\end{proof}

\begin{prop}\label{prop:elab_minimal}
Let $A$ be an elementary abelian \lcsc group.  Then $A$ is topologically characteristically simple.  Moreover, there is a compactly generated soluble \lcsc group $G$ of derived length at most $3$ such that $\Mon(G) \cong A$.
\end{prop}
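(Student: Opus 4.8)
The plan is to reduce to the four isomorphism types given by Proposition~\ref{prop:elab_characterization}, namely $C^n_p$ $(n \in \Nb)$, $C^{(\aleph_0)}_p$, $C^{\aleph_0}_p$ and $C^{\aleph_0}_p \oplus C^{(\aleph_0)}_p$, and to realize each as the monolith of a suitable compactly generated soluble \lcsc group of derived length at most $3$. Once this ``moreover'' clause is in hand, topological characteristic simplicity is automatic: if $N = \Mon(G)$ and $C$ is a closed characteristic subgroup of $N$, then conjugation by any $g \in G$ is a topological automorphism of $G$ stabilising $N$, so it restricts to a topological automorphism of $N$ and therefore fixes $C$ setwise; thus $C \normal G$, whence $C = \triv$ or $C \supseteq \Mon(G) = N$. (Alternatively, each of the four groups is homogeneous, as recorded in Robertson's classification quoted in the introduction, and a homogeneous abelian group is evidently topologically characteristically simple.)

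For the three infinite types I would use Lemma~\ref{lem:shift_monolith}. Fix, for each prime $p$, a finite metabelian group $F = F_p$ whose monolith $M$ is a noncentral elementary abelian $p$-group: for odd $p$ take $F_p = \Fb_p \rtimes \Fb_p^* \cong C_p \rtimes C_{p-1}$ with $M \cong C_p$, and for $p = 2$ take $F_2 = \mathrm{Alt}(4)$ with $M \cong C^2_2$ (uniformly, one may take $F_p = \Fb_{p^m} \rtimes C_q$ for a prime $q \ne p$ and $m$ the multiplicative order of $p$ modulo $q$, the multiplication action of $C_q$ on $\Fb_{p^m}$ then being faithful and irreducible). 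Lemma~\ref{lem:shift_monolith} gives $\Mon(F_p \wr \Zb) = M^{(\Zb)} \cong C^{(\aleph_0)}_p$ and $\Mon(F_p \Wr \Zb) = M^{\Zb} \cong C^{\aleph_0}_p$. Here $F_p \wr \Zb$ is finitely generated and discrete, hence compactly generated \lcsc; and $F_p \Wr \Zb$ is \lcsc (its base $F^{\Zb}_p$ is compact and metrizable) and compactly generated, having the compact open subgroup $F^{\Zb}_p$ with $F_p \Wr \Zb / F^{\Zb}_p \cong \Zb$. In both cases the quotient by the closed base of $F'_p$-valued functions is a wreath product of abelian groups, hence metabelian, so the second derived group lies in that base; since $F_p$ is metabelian the base is abelian, so $G$ has derived length at most $3$. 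For the remaining type $C^n_p$, the affine group $G = \Fb_{p^n} \rtimes \Fb_{p^n}^*$ is finite and metabelian and has monolith $\Fb_{p^n} \cong C^n_p$, since $\Fb_{p^n}^*$ acts on $\Fb_{p^n}$ faithfully and irreducibly and an abelian normal subgroup that is acted on faithfully and irreducibly is the monolith.

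It remains to realize $C^{\aleph_0}_p \oplus C^{(\aleph_0)}_p$, which is the additive group of $\Fb_p((t))$. I would take $G = \Fb_p((t)) \rtimes \Zb$, where $1 \in \Zb$ acts on $\Fb_p((t))$ by multiplication by $t$. Then $G$ is \lcsc, has derived length $2$ (since $G' \le \Fb_p((t))$), and is compactly generated, being generated by the compact open subgroup $\Fb_p[[t]]$ together with $(0,1)$. To see that $\Mon(G) = \Fb_p((t))$: first, a closed $\Zb$-invariant subgroup $V \ne 0$ of $\Fb_p((t))$ is a closed $\Fb_p[t,t^{-1}]$-submodule, and for $0 \ne v \in V$ the set $\Fb_p[t]v$ is dense in $v\Fb_p[[t]] = \Fb_p[[t]]$ (as $v$ is a power of $t$ times a unit of $\Fb_p[[t]]$), so $V \supseteq \Fb_p[[t]]$ and hence $V \supseteq t^{-m}\Fb_p[[t]]$ for every $m$, i.e.\ $V = \Fb_p((t))$; second, if $N$ is a nontrivial closed normal subgroup of $G$ with $N \cap \Fb_p((t)) = 0$, then $N$ projects injectively onto $k\Zb$ for some $k \ge 1$, so $N$ contains an element $(v, k)$, and commuting the base elements $(w, 0)$ past $(v, k)$ shows $((1 - t^k)w, 0) \in N$ for all $w \in \Fb_p((t))$; since $1 - t^k$ is a unit of $\Fb_p((t))$ this forces $\Fb_p((t)) \le N$, contradicting $N \cap \Fb_p((t)) = 0$. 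Hence $N \cap \Fb_p((t)) \ne 0$ for every nontrivial closed normal $N$, and applying the first point to $N \cap \Fb_p((t))$ gives $\Fb_p((t)) \le N$; so $\Mon(G) = \Fb_p((t))$.

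The verifications of solubility, local compactness and compact generation above are routine. The step I expect to be the main obstacle is the identification $\Mon(\Fb_p((t)) \rtimes \Zb) = \Fb_p((t))$ --- especially ruling out small closed normal subgroups that meet the base trivially, where the key input is that $1 - t^k$ is invertible in $\Fb_p((t))$ for every $k \ge 1$ --- together with the minor point that for $p = 2$ one cannot take $C_2$ as a noncentral monolith and must instead use a rank-$2$ example such as $\mathrm{Alt}(4)$.
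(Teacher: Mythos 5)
Your proposal is correct and follows essentially the same route as the paper: the same case division via Proposition~\ref{prop:elab_characterization}, the affine group $\Fb_{p^n} \rtimes \Fb^*_{p^n}$ for the finite case, Lemma~\ref{lem:shift_monolith} applied to a finite metabelian $F$ with noncentral elementary abelian monolith for $C^{(\aleph_0)}_p$ and $C^{\aleph_0}_p$, and $\Fb_p((t)) \rtimes \langle t \rangle$ for the mixed case (where the paper sidesteps your $p=2$ caveat by taking $F = \Fb_{p^2} \rtimes \Fb^*_{p^2}$ uniformly, and your commutator computation with the unit $1-t^k$ just fills in the paper's ``it is easy to see'' step). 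The only nitpick is that in the density argument you should first shift $v$ by a power of $t$ (using $\Zb$-invariance of $V$) so that it is a unit of $\Fb_p[[t]]$ before asserting $v\Fb_p[[t]] = \Fb_p[[t]]$.
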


\begin{proof}
We go through the possibilities in Proposition~\ref{prop:elab_characterization}(ii).

\emph{Case 1: $A$ is finite.}

In this case $A \cong C^n_p$ for some $n \in \Nb$, and we can regard $A$ as the additive group of the field $\Fb_{p^n}$ with $p^n$ elements.  The multiplicative group $\Fb^*_{p^n}$ is then an abelian group that acts transitively on the nonzero elements of $\Fb_{p^n}$, and so $A$ is isomorphic to the monolith of the finite metabelian group $\Fb_{p^n} \rtimes \Fb^*_{p^n}$ (or just $\Fb_2$ in the case $(n,p) = (1,2)$).

\emph{Case 2: $A$ is isomorphic to $C^{\aleph_0}_p$ or $C^{(\aleph_0)}_p$.}

We obtain $A$ as the monolith of a group of the form $F \Wr \Zb$ or $F \wr \Zb$, respectively, for some finite group $F$; by Lemma~\ref{lem:shift_monolith}, it is enough to take a finite metabelian group $F$ with noncentral monolith isomorphic to $C^n_p$ for some $n \ge 1$.  For example, we can take $F = \Fb_{p^2} \rtimes \Fb^*_{p^2}$.

\emph{Case 3: $A \cong C^{\aleph_0}_p \oplus C^{(\aleph_0)}_p$.}

In this case $A$ is isomorphic to the additive group of $\Fb_p((t))$; write $\cdot$ for multiplication in the field $\Fb_p((t))$.  We consider the compactly generated metabelian \lcsc group $G = \Fb_p((t)) \rtimes \grp{s}$, where $sgs^{-1} = t\cdot g$ for all $g \in \Fb_p((t))$.  Let $N$ be a nontrivial closed normal subgroup of $G$; it is easy to see that $N$ intersects $\Fb_p((t))$ nontrivially, so let us assume $N \le \Fb_p((t))$.  Given a nontrivial element $g$ of $N$, we see that for all $f \in \Fb_p[t,t^{-1}]$, the product $f \cdot g$ is a sum of conjugates of $g$ in $G$, and hence an element of $N$.  Since $\Fb_p[t,t^{-1}]$ is dense in $\Fb_p((t))$ and $N$ is closed, in fact $f \cdot g \in N$ for all $f \in \Fb_p((t))$, that is, $N$ is a nonzero ideal of $\Fb_p((t))$.  Since $\Fb_p((t))$ is a field we conclude that $N = \Fb_p((t))$.  Thus $A$ is the monolith of $G$.
\end{proof}

\subsection{The groups $\Rb^n$}

The characterization of the groups $\Rb^n$ is well-known.

\begin{prop}\label{prop:R_characterization}Let $A$ be a nontrivial abelian locally compact group.  Then the following are equivalent:
\begin{enumerate}[(i)]
\item $A \cong \Rb^n$ for some $n \in \Nb$ (in particular, $A$ is second-countable);
\item $A$ is connected and $P(A) = \{0\}$.
\end{enumerate}
\end{prop}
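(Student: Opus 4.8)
The plan is to get (i)$\,\Rightarrow\,$(ii) directly from elementary properties of $\Rb^n$, and to obtain (ii)$\,\Rightarrow\,$(i) from the structure theorem Theorem~\ref{thm:open_subgroup} together with the observation that $P(A)$ absorbs every compact subgroup.

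For (i)$\,\Rightarrow\,$(ii): I would note that $\Rb^n$ is path-connected, hence connected, and that for a nonzero $v \in \Rb^n$ the cyclic subgroup $\langle v\rangle = \Zb v$ is already closed and discrete, so $\cgrp{v} = \Zb v$ is not compact and $v \notin P(\Rb^n)$; thus $P(\Rb^n) = \{0\}$.

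For (ii)$\,\Rightarrow\,$(i): By Theorem~\ref{thm:open_subgroup}, $A$ has an open subgroup $U$ of the form $\Rb^a \oplus K$ with $a \in \Nb \cup \{0\}$ and $K$ compact. An open subgroup of a topological group is also closed (its complement is a union of cosets), so connectedness of $A$ forces $U = A$, i.e. $A = \Rb^a \oplus K$. Now every $g \in K$ satisfies $\cgrp{g} \subseteq K$, so $\cgrp{g}$ is compact and $g \in P(A)$; hence $K \le P(A) = \{0\}$, giving $K = \{0\}$ and $A = \Rb^a$. Since $A$ is nontrivial we have $a \ge 1$, so we may take $n = a$. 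Finally $\Rb^n$ is metrizable and $\sigma$-compact, hence second-countable, which gives the parenthetical remark in (i).

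I do not anticipate a genuine obstacle here: the statement is a short consequence of Theorem~\ref{thm:open_subgroup}. The only points that deserve a word of justification are that an open subgroup of the connected group $A$ must be all of $A$, and that compactness of $K$ forces $K \le P(A)$; both are routine.
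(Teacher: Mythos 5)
Your proof is correct and follows essentially the same route as the paper: both directions rest on Theorem~\ref{thm:open_subgroup}, with connectedness forcing the open subgroup $\Rb^a \oplus K$ to be all of $A$ and the hypothesis $P(A) = \{0\}$ eliminating the compact summand $K$. The paper states this more tersely but the argument is the same.
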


\begin{proof}
Clearly $\Rb^n$ is connected, abelian and $P(\Rb^n) = \{0\}$.  Conversely, suppose $A$ is a connected abelian \lcsc group such that $P(A) = \{0\}$.  Then $A$ has no proper open subgroup and no nontrivial compact subgroup, so $A \cong \Rb^n$ for some $n \in \Nb$ by Theorem~\ref{thm:open_subgroup}.
\end{proof}

We note also that $\Rb^m \cong \Rb^n$ if and only if $m=n$ (see \cite[Theorem~9.12]{HR}).

\begin{example}\label{ex:Rn}Fix $n \in \Nb$.  We can realize $\Rb^n$ as the monolith of the virtually connected soluble Lie group $(\Rb \rtimes \Rb^*_{>0}) \wr C_n$.\end{example}

\subsection{The groups $\Qb^{(\kappa)}$}

The structure of divisible countable torsion-free abelian groups is well-known, but the circumstances in which such groups can occur as minimal closed normal subgroups are more interesting.

\begin{prop}\label{prop:Q_characterization}Let $A$ be a nontrivial abelian \lcsc group.  Then the following are equivalent:
\begin{enumerate}[(i)]
\item $A \cong \Qb^{(\kappa)}$ for some $\kappa \in \Nb \cup \{\aleph_0\}$;
\item $A$ is discrete, torsion-free and divisible;
\item $A$ is totally disconnected, $pA$ is dense in $A$ for all $p \in \Pb$, and $P(A) = \{0\}$.
\end{enumerate}
\end{prop}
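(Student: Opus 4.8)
The plan is to prove the cycle through (i)~$\Leftrightarrow$~(ii) and (ii)~$\Leftrightarrow$~(iii); essentially all of the content sits in the implication (iii)~$\Rightarrow$~(ii). For (i)~$\Rightarrow$~(ii) one just observes that $\Qb^{(\kappa)}$ carries the discrete topology and is torsion-free and divisible. For (ii)~$\Rightarrow$~(i), if $A$ is discrete, torsion-free and divisible, then Lemma~\ref{lem:discrete_divisible} gives $A \cong \Qb^{(\kappa)} \oplus \bigoplus_{p \in \Pb}(\Zb[\frac{1}{p}]/\Zb)^{(\kappa_p)}$; torsion-freeness forces $\kappa_p = 0$ for every $p$, and a discrete \lcsc group is countable, so $\kappa \le \aleph_0$.

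For (ii)~$\Rightarrow$~(iii): a discrete group is totally disconnected; divisibility gives $pA = A$ for every prime $p$, which is trivially dense; and a nonzero compact element $g$ would give $\cgrp{g}$ a nontrivial finite subgroup (finite, being a compact subset of a discrete space), contradicting torsion-freeness, so $P(A) = \{0\}$.

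The substantive direction is (iii)~$\Rightarrow$~(ii). Torsion-freeness is immediate, since a nonzero torsion element generates a finite — hence compact — subgroup, so lies in $P(A) = \{0\}$. The key point is that $A$ is in fact discrete: since $A$ is totally disconnected, Theorem~\ref{thm:open_subgroup} furnishes an open subgroup $\Rb^a \oplus K$ with $K$ compact, and the Euclidean summand vanishes because it lies in the identity component of $A$, which is $\{0\}$; hence $K$ is a compact open subgroup, every element of $K$ is a compact element of $A$, so $K \le P(A) = \{0\}$ and $\{0\}$ is open. (Alternatively: by Lemma~\ref{lem:PG_closed}, $P(A)$ is closed and contains a compact open subgroup, hence is open, and being trivial this forces $A$ discrete.) Once $A$ is discrete, the hypothesis that $pA$ is dense in $A$ becomes $pA = A$ for all primes $p$, so $nA = A$ for all $n \in \Nb$ and $A$ is divisible; with (ii)~$\Rightarrow$~(i) this also yields (i).

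The main, and only mild, obstacle is precisely this discreteness step: once it is in place, the three hypotheses of (iii) collapse to the classical description of discrete divisible torsion-free abelian groups supplied by Lemma~\ref{lem:discrete_divisible}, and no further work is required.
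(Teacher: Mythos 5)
Your proposal is correct and follows essentially the same route as the paper: (i)$\Leftrightarrow$(ii) via Lemma~\ref{lem:discrete_divisible} together with second-countability bounding $\kappa$, and (iii)$\Rightarrow$(ii) by using $P(A)=\{0\}$ with Theorem~\ref{thm:open_subgroup} to force discreteness, after which density of $pA$ upgrades to $pA=A$ and hence divisibility. You simply spell out a few steps the paper leaves as ``clearly''; no substantive difference.
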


\begin{proof}
(i) and (ii) are equivalent by Lemma~\ref{lem:discrete_divisible}; note that we can have at most countably many copies of $\Qb$, since $A$ is second-countable.  Clearly (ii) implies (iii).

Suppose that $A$ is totally disconnected, $pA$ is dense in $A$ for all $p \in \Pb$, and $P(A) = \{0\}$.  We see that $A$ has no nontrivial compact subgroups; by Theorem~\ref{thm:open_subgroup} it follows that $A$ is discrete, and clearly also $A$ is torsion-free.  We also have $pA = A$ for all $p \in \Pb$.  It is then clear (by induction on the prime factorization of $n$) that $nA = A$ for all $n \in \Nb$; hence $A$ is divisible.  Thus (iii) implies (ii), completing the proof that (i)--(iii) are equivalent.
\end{proof}

If $A \cong \Qb^{(\kappa)}$ for $\kappa \in \Nb \cup \{\aleph_0\}$, then $\kappa$ is the supremum of the number of generators of any finitely generated subgroup of $A$.  In particular, $\kappa$ is uniquely determined.

For whether or not $\Qb^{(\kappa)}$ appears as a minimal normal subgroup of a compactly generated (soluble) group, we appeal to an article \cite{Hall} by P. Hall.

\begin{thm}[{Hall, \cite[Theorem~2]{Hall}}]\label{thm:Hall}There is a finitely generated soluble group $G$ of derived length 3 such that $\Mon(G) \cong \Qb^{(\aleph_0)}$.\end{thm}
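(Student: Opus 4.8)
The plan is to exhibit such a group $G$ explicitly. The idea is to realise $\Qb^{(\aleph_0)}$ as a \emph{simple} module that is nonetheless finitely generated over the integral group ring of a finitely generated metabelian group $H$, and then take $G = M \rtimes H$.

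Concretely, fix a bijection $n \mapsto p_n$ from $\Zb$ onto the set $\Pb$ of primes. Let $B = \bigoplus_{n \in \Zb}\Zb b_n$ be free abelian of countably infinite rank, and let $B$ act on the abelian group $\Qb$ with $b_n$ acting as multiplication by $p_n$. The subring of $\Qb$ generated by the $p_n$ and their inverses is all of $\Qb$, so $\Qb$ is cyclic as a $\Zb B$-module (generated by $1$); and since $\Qb$ is a field, its $\Zb B$-submodules are exactly its $\Qb$-subspaces, namely $\{0\}$ and $\Qb$, so $\Qb$ is a \emph{simple} $\Zb B$-module. Now let $H = B \rtimes \langle v \rangle$ with $v$ of infinite order acting by the shift $v b_n v^{-1} = b_{n+1}$, so that $H$ is metabelian and generated by the two elements $b_0$ and $v$, and set $M = \Zb H \otimes_{\Zb B}\Qb$, the induced module. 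As an abelian group $M \cong \bigoplus_{n \in \Zb}(v^n \otimes \Qb) \cong \Qb^{(\aleph_0)}$, and $v$ carries the $n$-th summand isomorphically onto the $(n+1)$-st. Finally put $G = M \rtimes H$.

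Three things then have to be checked. First, $M$ is generated as a $\Zb H$-module by $1 \otimes 1$ (already $\Zb B$ realises every rational on $1$, and the powers of $v$ reach every summand), so $G$ is generated by $b_0$, $v$ and $1 \otimes 1$; moreover $M$ is abelian and $G/M \cong H$ is metabelian, so $G$ is soluble of derived length at most $3$, and in fact exactly $3$, since $H'$ acts nontrivially on $M$ and hence the nonzero subgroup $[M,H']$ is contained in $G''$. Second, $M$ is a simple $\Zb H$-module: as a $\Zb B$-module it is the direct sum of the modules $v^n \otimes \Qb$, which are simple and pairwise non-isomorphic (the action on $v^n \otimes \Qb$ factors through the character $b_i \mapsto p_{i-n}$, and $p_{-n}$ already determines $n$), while $v$ permutes these summands in a single orbit; hence any nonzero $H$-invariant subgroup of $M$ contains some $v^n \otimes \Qb$, and then all of them, so equals $M$. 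Third, $H$ acts faithfully on $M$ — this comes down to the multiplicative independence of the primes $p_n$ in $\Qb^*$ — so $\mathrm{C}_G(M) = M$.

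Granting these, let $N$ be any nontrivial normal subgroup of $G$. If $N \cap M \neq \{0\}$, then $N \cap M$ is a nonzero $H$-invariant subgroup of $M$, hence equals $M$, so $M \le N$. If instead $N \cap M = \{0\}$, then $[N, M] \le N \cap M = \{0\}$, so $N \le \mathrm{C}_G(M) = M$, forcing $N = \{0\}$, which is absurd. Thus every nontrivial normal subgroup of $G$ contains $M$, and $\Mon(G) = M \cong \Qb^{(\aleph_0)}$. The delicate point of the argument is reconciling the divisibility of $\Qb^{(\aleph_0)}$ (which makes it far from finitely generated as an abstract abelian group) with its being finitely generated as a normal subgroup of the finitely generated group $G$: this is precisely what the wreath-product structure of $H$ provides, its group ring being rich enough to produce all the required denominators on a single cyclic generator, while the induced-module description simultaneously keeps $M$ simple and therefore forces it to be the monolith.
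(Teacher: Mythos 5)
Your construction is correct and is essentially Hall's construction, which is exactly what the paper cites and sketches after the theorem statement: your $G = M \rtimes (B \rtimes \grp{v})$, with $b_i$ acting on the $n$-th summand $v^n \otimes \Qb$ as multiplication by $p_{i-n}$, is the group $\Qb^{(\Zb)} \rtimes \grp{\eta,\xi}$ of the paper under the identification $v = \xi$, $b_k = \xi^k\eta\xi^{-k}$. The induced-module packaging and your explicit verifications (simplicity of $M$ over $\Zb H$ via the pairwise non-isomorphic $\Zb B$-summands, faithfulness via unique factorization, and derived length exactly $3$) are a sound, self-contained expansion of what the paper delegates to the citation of Hall.
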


More specifically, writing $\Qb^{(\aleph_0)}$ as a $\Qb$-vector space $\Qb^{(\Zb)}$ with basis $\{v_m \mid m \in \Zb\}$, Hall's construction is
\[
G = \Qb^{(\Zb)} \rtimes \grp{\eta,\xi},
\]
where $\xi$ and $\eta$ act on $\Qb^{(\Zb)}$ via $\xi.v_m = v_{m+1}$ and $\eta.v_m = \rho(m)v_m$, where $\rho$ is some bijection from $\Zb$ to $\Pb$.  One sees that the conjugates of $\eta$ in $\grp{\eta,\xi}$ commute with one another, so $\grp{\eta,\xi} \cong \Zb \wr \Zb$.

By contrast, Hall shows that for $n \in \Nb$, the group $\Qb^n$ cannot occur as a minimal normal subgroup of a finitely generated group.  We adapt the argument to show that $\Qb^n$ cannot occur as a minimal closed normal subgroup of a compactly generated group, which leads to another characterization of the group $\Qb^{(\aleph_0)}$.

\begin{prop}\label{prop:Q_characterization:minimal}
The group $\Qb^{(\aleph_0)}$ is the only discrete, abelian and torsion-free group that can occur as a minimal closed normal subgroup of a compactly generated \lcsc group.
\end{prop}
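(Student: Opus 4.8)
The plan is to show that minimality already forces $A$ to be divisible, hence of the form $\Qb^{(\kappa)}$ by Proposition~\ref{prop:Q_characterization}, and then to rule out finite $\kappa$ by adapting P. Hall's linear argument; the fact that $\Qb^{(\aleph_0)}$ does occur is Theorem~\ref{thm:Hall}, so nothing further is needed in that direction.

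First I would observe that if $A$ is discrete, torsion-free, abelian and a minimal closed normal subgroup of the compactly generated \lcsc group $G$, then for every prime $p$ the subgroup $pA$ is closed (since $A$ is discrete), and it is characteristic in $A$ and hence normal in $G$ because $A \normal G$; it is nonzero since $A$ is torsion-free and nontrivial, so minimality gives $pA = A$. Letting $p$ vary (and inducting on prime factorisations as in the proof of Proposition~\ref{prop:Q_characterization}), $A$ is divisible, so $A \cong \Qb^{(\kappa)}$ for some $\kappa \in \Nb \cup \{\aleph_0\}$ with $\kappa \ge 1$.

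Suppose towards a contradiction that $\kappa = n$ is finite. Then $A$ is an $n$-dimensional $\Qb$-vector space with $\Aut(A) = \mathrm{GL}_n(\Qb)$, uniqueness of roots making every group automorphism $\Qb$-linear. The kernel $C = \CC_G(A)$ of the conjugation action is a closed normal subgroup of $G$ (it is the intersection of the stabilisers of the points of the discrete closed set $A$), so $\bar G := G/C$ is a compactly generated \lcsc group acting faithfully on $A$. The key step is to note that $\bar G$ is in fact discrete: fixing a $\Qb$-basis $v_1,\dots,v_n$ of $A$, each $\Stab_{\bar G}(v_i)$ is open because the orbit map $\bar G \to A$ into the discrete group $A$ is continuous, and the intersection of these finitely many open subgroups fixes a spanning set of $A$, hence is trivial. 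A compactly generated discrete group is finitely generated, so $\bar G$ is generated by finitely many matrices in $\mathrm{GL}_n(\Qb)$.

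From here I would run Hall's obstruction. Any subgroup of $A$ invariant under $\bar G$ is closed (as $A$ is discrete) and normal in $G$, so minimality of $A$ makes it an irreducible $\Zb[\bar G]$-module. But taking a common denominator $N$ of the entries of the (finitely many) generators of $\bar G$ and their inverses, we get $\bar G \le \mathrm{GL}_n(\Zb[1/N])$, so $\Zb[1/N]^n$ is a $\bar G$-invariant subgroup of $A \cong \Qb^n$ that is nonzero and proper --- it omits $v_1/q$ for any prime $q \nmid N$ --- contradicting irreducibility. Hence $\kappa$ is infinite and $\kappa = \aleph_0$, which is what we want. I expect the discreteness-of-$\bar G$ reduction to be the main obstacle, since it is what is genuinely new relative to Hall's finitely generated setting; once that is in place, the concluding contradiction is essentially Hall's original argument rephrased for $\Zb[\bar G]$-modules.
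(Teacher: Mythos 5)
Your proposal is correct and follows essentially the same route as the paper: reduce to $\Qb^n$ via divisibility, show the conjugation action factors through a finitely generated group because the centralizer of a $\Qb$-basis is open, and then apply Hall's common-denominator obstruction to produce a proper nontrivial $G$-invariant subgroup. The only (harmless) differences are that you deduce openness of the stabilizers from continuity of the orbit map into the discrete group $A$ where the paper uses the closed-subgroup-of-countable-index argument, and you exhibit $\Zb[1/N]^n$ directly where the paper takes the subgroup generated by the $G$-conjugates of the basis and shows it has bounded denominators.
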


\begin{proof}
The group is $\Qb^{(\aleph_0)}$ is discrete, abelian and torsion-free, and we have seen that it can occur as a minimal closed normal subgroup of a compactly generated \lcsc group.

Now suppose that $G$ is a compactly generated \lcsc group and that $M$ is a minimal closed normal subgroup of $G$ that is discrete, abelian and torsion-free.  In particular, $M$ is characteristically simple.  Since $M$ is torsion-free, we must have $nM = M$ for all $n \in \Nb$, that is, $M$ is divisible.

We may assume for a contradiction that $M$ is not isomorphic to $\Qb^{(\aleph_0)}$, so by Proposition~\ref{prop:Q_characterization} we have $M \cong \Qb^n$ for some $n \in \Nb$.  Let $\{v_1,\dots,v_n\}$ be a basis for $M$ as a $\Qb$-vector space.  Since $M$ is countable, the group $\CC_G(v_i)$ is a closed subgroup of $G$ of countable index, and hence $\CC_G(v_i)$ is open for $1 \le i \le n$.  Moreover, we observe that since elements of $M$ are uniquely divisible, $\CC_G(v_i)$ centralizes all rational multiples of $v_i$.  Consequently, $\CC_G(M)$ is the following open subgroup of $G$:
\[
\CC_G(M) = \bigcap^n_{i=1}\CC_G(v_i).
\]
Clearly also $\CC_G(M)$ is normal; the quotient $G/\CC_G(M)$ is then a finitely generated group, so we can write $G = \grp{x_1,\dots,x_m}\CC_G(M)$.

The rest of the argument is as in \cite{Hall}.  Writing $M$ additively for the moment, and letting $x_1,\dots,x_m$ act on $M$ by left conjugation, we have equations of the form
\[
x_{\alpha}.v_i = \sum^n_{j=1} \xi_{ij\alpha}v_j; \; x^{-1}_{\alpha}.v_i = \sum^n_{j=1} \eta_{ij\alpha}v_j,
\]
where $i$ runs from $1$ to $n$ and $\alpha$ from $1$ to $m$.  The coefficients $\xi_{ij\alpha}$ and $\eta_{ij\alpha}$ are rational numbers, which between them involve only a finite set of primes $\pi$ in their denominators.  Let $M_1$ be the group generated by all $G$-conjugates of $v_1,\dots,v_n$.  Then every element of $M_1$ is of the form $\sum^n_{j=1}\lambda_jv_j$, where each of the coefficients $\lambda_j$ belongs to the proper subring $\Zb[\frac{1}{p}: p \in \pi]$ of $\Qb$.  In particular, $M_1$ is a proper nontrivial subgroup of $M$ that is normal in $G$, contradicting the minimality of $M$.
\end{proof}

On the other hand, $\Qb^n$ does occur as the monolith of a countable soluble group $G$.  For example, similar to Example~\ref{ex:Rn}, we can take $G = (\Qb \rtimes \Qb^*_{>0}) \wr C_n$.  The difference is that $\Rb^*_{>0}$ is compactly generated whereas $\Qb^*_{>0}$ is not.

\subsection{The groups $\widehat{\Qb}^\kappa$}

The character group $\widehat{\Qb}$ of $\Qb$ is perhaps less widely-known than $\Qb$ itself, but still a basic object in the theory of locally compact abelian groups.  It can be constructed as follows (see for instance \cite{Conrad} for details):

Given a $p$-adic number $a$, write $\{a\}_p$ for the $p$-adic fractional part of $a$, in other words $\{a\}_p$ is the unique element $r \in \Zb[\frac{1}{p}]$ such that $0 \le r < 1$ and $a \in r + \Zb_p$.

Let $\Qb_{\infty} = \Zb_{\infty} = \Rb$.  Let $\mathbb{A}$ be the ring of adeles over $\Qb$: specifically, 
\[
\mathbb{A} = \bigoplus_{p \in \Pb \cup \{\infty\}} (\Qb_p,\Zb_p).
\]
We identify $\mathbb{A}$ with its additive group, which is a locally compact abelian group.  For each $a \in \mathbb{A}$ there is a character $\xi_a$ of $\Qb$ defined by
\[
\xi_a(r) = e^{-2\pi i ra_{\infty}} \prod_{p \in \Pb}e^{2\pi i \{ra_p\}_p}.
\]
The map $a \mapsto \xi_a$ defines a quotient map $\theta: \mathbb{A} \rightarrow \widehat{\Qb}$.  The kernel consists of those $a \in \mathbb{A}$ such that $a_p$ is a constant rational number over all $p \in \Pb \cup \{\infty\}$; in particular, $\ker\theta \cong \Qb$ as a discrete subgroup of $\mathbb{A}$.  Thus $\widehat{\Qb}$ can be identified with the quotient $\mathbb{A}/\Qb$.

We can characterize the groups $\widehat{\Qb}^\kappa$ in a similar way to their dual groups $\Qb^{(\kappa)}$.

\begin{prop}\label{prop:Qhat_characterization}Let $A$ be a nontrivial abelian \lcsc group.  Then the following are equivalent:
\begin{enumerate}[(i)]
\item $A \cong \widehat{\Qb}^{\kappa}$ for some $\kappa \in \Nb \cup \{\aleph_0\}$;
\item $A$ is compact, torsion-free and densely divisible;
\item $A$ is torsion-free and connected, and $P(A) = A$.
\end{enumerate}
\end{prop}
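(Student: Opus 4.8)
The plan is to derive the equivalence of (i) and (ii) by dualizing Proposition~\ref{prop:Q_characterization}, and the equivalence with (iii) from the structure theory of compact abelian groups recalled above (Lemmas~\ref{lem:PG_closed} and~\ref{lem:compact_divisible}).

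\textbf{(i)$\iff$(ii).} By Lemma~\ref{lem:dual}(i) the class of nontrivial abelian \lcsc groups is closed under $A \mapsto \widehat{A}$, and $A = \widehat{\widehat{A}}$. I would record the following three translations: $A$ is compact iff $\widehat{A}$ is discrete (Lemma~\ref{lem:dual}(ii)); $A$ is densely divisible iff $\widehat{A}$ is torsion-free (Lemma~\ref{lem:dual}(iii) applied to $\widehat{A}$ in place of $A$); and, \emph{once $\widehat{A}$ is known to be discrete}, $A$ is torsion-free iff $\widehat{A}$ is densely divisible (Lemma~\ref{lem:dual}(iii)) iff $\widehat{A}$ is divisible. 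Hence $A$ satisfies (ii) if and only if $\widehat{A}$ is discrete, torsion-free and divisible, which by Proposition~\ref{prop:Q_characterization} happens if and only if $\widehat{A} \cong \Qb^{(\kappa)}$ for some $\kappa \in \Nb \cup \{\aleph_0\}$, equivalently $A \cong \widehat{\Qb^{(\kappa)}} \cong \widehat{\Qb}^{\kappa}$, using the standard identification of the dual of a direct sum of discrete groups with the direct product of the duals. For the reverse implication one can instead just check directly that $\widehat{\Qb}$ is compact, torsion-free and densely divisible — its dual $\Qb$ being discrete, divisible and torsion-free — and that these three properties are inherited by countable direct products.

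\textbf{(ii)$\iff$(iii).} If $A$ is compact then every closed subgroup of $A$ is compact, so $P(A) = A$, and a compact densely divisible abelian group is connected by Lemma~\ref{lem:compact_divisible}; this gives (ii)$\Rightarrow$(iii). Conversely, if $A$ is connected, torsion-free and $P(A) = A$, then $P(A)$ is compact by Lemma~\ref{lem:PG_closed}, so $A$ is compact; being connected and compact, $A$ is densely divisible by Lemma~\ref{lem:compact_divisible}, which together with torsion-freeness gives (ii).

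\textbf{Expected obstacle.} None of real substance: the argument is essentially a dictionary translation of Proposition~\ref{prop:Q_characterization}. The single delicate point is the third translation in the first step — Lemma~\ref{lem:dual}(iii) only delivers \emph{dense} divisibility of $\widehat{A}$, so one must first establish discreteness of $\widehat{A}$ (from compactness of $A$) before concluding that $\widehat{A}$ is divisible. It is also worth verifying explicitly that $\widehat{\Qb^{(\kappa)}} \cong \widehat{\Qb}^{\kappa}$ is an isomorphism of topological groups with the cardinal $\kappa$ preserved, so that the bound $\kappa \le \aleph_0$ — which is forced in any case by second-countability of $A$, since $\widehat{\Qb}$ is a nontrivial compact metrizable group — is correctly matched on both sides.
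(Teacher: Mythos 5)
Your proof is correct, but it closes the cycle of implications by a genuinely different route from the paper. The paper proves (i)$\Rightarrow$(ii)$\Rightarrow$(iii)$\Rightarrow$(i), and the last step passes directly from ``compact, torsion-free, connected'' to (i) by quoting the structure theorem for compact torsion-free abelian groups (Lemma~\ref{lem:tf_compact}), with connectedness and second-countability used to discard the $\prod_{p}\Zb_p^{\kappa_p}$ factors and to bound $\kappa$. You instead establish (i)$\Leftrightarrow$(ii) as a full biconditional by dualizing Proposition~\ref{prop:Q_characterization}: Lemma~\ref{lem:dual} translates ``compact, torsion-free, densely divisible'' for $A$ into ``discrete, densely divisible, torsion-free'' for $\widehat{A}$, and you correctly isolate the one delicate point, namely that discreteness of $\widehat{A}$ must be in hand before dense divisibility upgrades to divisibility. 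The identification $\widehat{\Qb^{(\kappa)}} \cong \widehat{\Qb}^{\kappa}$ you ask for is exactly Lemma~\ref{lem:localdirect_dual} with trivial compact open subgroups, which is also what the paper uses for its (i)$\Rightarrow$(ii). Your (ii)$\Leftrightarrow$(iii) matches the paper's (ii)$\Rightarrow$(iii) and, in the converse direction, reproduces the first half of the paper's (iii)$\Rightarrow$(i) (compactness of $A$ via Lemma~\ref{lem:PG_closed}). What your route buys is that it leans on a result already proved in the paper together with Robertson's duality, rather than on the external classification of compact torsion-free groups; the paper's route is shorter at the cost of citing Lemma~\ref{lem:tf_compact} and leaving implicit the step where connectedness eliminates the pro-$p$ direct factors.
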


\begin{proof}
Suppose $A \cong \widehat{\Qb}^{\kappa}$ for some $\kappa \in \Nb \cup \{\aleph_0\}$.  Then by Lemma~\ref{lem:localdirect_dual}, $A$ is the character group of $\Qb^{(\kappa)}$.  Since $\Qb^{(\kappa)}$ is discrete and torsion-free, it follows by Lemma~\ref{lem:dual} that $A$ is compact and densely divisible.  Thus (i) implies (ii).

Suppose (ii) holds.  Clearly $P(A) = A$, and by Lemma~\ref{lem:compact_divisible}, $A$ is connected.  Thus (ii) implies (iii).

Suppose (iii) holds.  By Lemma~\ref{lem:PG_closed}, $A$ is compact.  We deduce that (i) holds from Lemma~\ref{lem:tf_compact} and the fact that $A$ is second-countable.  This completes the proof that (i)--(iii) are equivalent.
\end{proof}

If $A \cong \widehat{\Qb}^{\kappa}$, then the cardinal $\kappa$ is uniquely determined by $\widehat{A}$, and hence by $A$.

Using the natural isomorphism between $\Aut(\Qb^{(\kappa)})$ and $\Aut(\widehat{\Qb}^{\kappa})$, Hall's construction from the last subsection immediately gives rise to a compactly generated soluble \lcsc group $G$ of the form
\[
G = \widehat{\Qb}^{\aleph_0} \rtimes (\Zb \wr \Zb)
\]
with $\Mon(G) \cong \widehat{\Qb}^{\aleph_0}$; similarly for $n \in \Nb$, we obtain $\widehat{\Qb}^n$ as the monolith of a soluble \lcsc group of the form
\[
(\widehat{\Qb} \rtimes \Qb^*_{>0}) \wr C_n.
\]

On the other hand, the same obstacle as for $\Qb^n$ prevents $\widehat{\Qb}^n$ from occurring as a minimal closed normal subgroup of a compactly generated \lcsc group and gives a characterization of $\widehat{\Qb}^{\aleph_0}$.

\begin{prop}\label{prop:Qhat_characterization:minimal}
The group $\widehat{\Qb}^{\aleph_0}$ is the only compact, abelian and torsion-free group that can occur as a minimal closed normal subgroup of a compactly generated \lcsc group.
\end{prop}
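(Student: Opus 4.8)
The plan is to transport the argument of Proposition~\ref{prop:Q_characterization:minimal} through Pontryagin duality. Suppose $M$ is a minimal closed normal subgroup of a compactly generated \lcsc group $G$ with $M$ compact, abelian and torsion-free. By Proposition~\ref{prop:Qhat_characterization}, $M \cong \widehat{\Qb}^{\kappa}$ for some $\kappa \in \Nb \cup \{\aleph_0\}$, and we have already seen that $\widehat{\Qb}^{\aleph_0}$ does arise this way, using Hall's construction together with the natural isomorphism $\Aut(\Qb^{(\aleph_0)}) \cong \Aut(\widehat{\Qb}^{\aleph_0})$. Assume for a contradiction that $\kappa = n$ is finite, so that the discrete dual $\widehat{M}$ is isomorphic to $\Qb^n$. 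Conjugation makes $G$ act on $M$ by topological automorphisms, and via the natural isomorphism $\Aut(M) \cong \Aut(\widehat{M})$ this gives an action of $G$ on $\widehat{M}$ by automorphisms whose kernel is exactly the centraliser $\CC_G(M)$. Under Pontryagin duality, $N \mapsto N^{\perp} = \{\chi \in \widehat{M} : \chi(N) = \{1\}\}$ is an order-reversing bijection from the closed subgroups of $M$ to the subgroups of $\widehat{M}$ (all of which are closed, $\widehat{M}$ being discrete), and it matches up the $G$-invariant subgroups on the two sides; since $M$ is abelian, a closed subgroup of $M$ is normal in $G$ exactly when it is $G$-invariant. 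It therefore suffices to produce a proper nontrivial $G$-invariant subgroup of $\widehat{M} \cong \Qb^n$: its annihilator is then a proper nontrivial closed normal subgroup of $G$ contained in $M$, contradicting minimality.

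Next I would check that $\CC_G(M)$ is open. For $\chi \in \widehat{M}$ the pointwise stabiliser $\Stab_G(\chi)$ is the intersection over $m \in M$ of the sets $\{g \in G : \chi(gmg^{-1}) = \chi(m)\}$, each of which is closed, being the preimage of the point $\chi(m)$ under the continuous map $g \mapsto \chi(gmg^{-1})$; hence $\Stab_G(\chi)$ is closed. Its index in $G$ is the cardinality of the $G$-orbit of $\chi$, which is at most $|\widehat{M}| = \aleph_0$, so $\Stab_G(\chi)$ is a closed subgroup of countable index in the locally compact (hence Baire) group $G$ and is therefore open. Choosing a $\Qb$-basis $\chi_1,\dots,\chi_n$ of $\widehat{M} \cong \Qb^n$ and using that the $G$-action on $\widehat{M}$ is $\Qb$-linear (elements of $\widehat{M}$ being uniquely divisible), we get $\CC_G(M) = \bigcap_{i=1}^n \Stab_G(\chi_i)$, a finite intersection of open subgroups; it is also normal. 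Consequently $G/\CC_G(M)$ is a finitely generated discrete group, being a quotient of the compactly generated group $G$; write $G = \langle x_1,\dots,x_m\rangle\,\CC_G(M)$.

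It remains to run Hall's computation as in Proposition~\ref{prop:Q_characterization:minimal}, now inside $\widehat{M}$. With respect to the basis $\chi_1,\dots,\chi_n$, each of $x_1,\dots,x_m$ and their inverses acts on $\widehat{M} \cong \Qb^n$ by a matrix over $\Qb$, and only finitely many primes $\pi$ occur among the denominators of the entries of these finitely many matrices. Putting $R = \Zb[\tfrac{1}{p} : p \in \pi]$, every product of these matrices again has entries in $R$, so every $G$-translate of each $\chi_i$ lies in $R\chi_1 + \dots + R\chi_n$. Hence the subgroup $M_1$ of $\widehat{M}$ generated by all $G$-translates of $\chi_1,\dots,\chi_n$ is $G$-invariant, nontrivial, and contained in $R\chi_1 + \dots + R\chi_n$, which is a proper subgroup of $\Qb\chi_1 + \dots + \Qb\chi_n = \widehat{M}$. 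This $M_1$ is the required proper nontrivial $G$-invariant subgroup of $\widehat{M}$, giving the contradiction; so $\kappa = \aleph_0$.

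I expect the only delicate points to be the duality bookkeeping in the first paragraph --- pinning down the $G$-action on $\widehat{M}$, verifying that annihilation is $G$-equivariant, and checking that the stabilisers used in the second paragraph are genuinely closed --- together with the elementary fact that a closed subgroup of countable index in a locally compact group is open. Once $\CC_G(M)$ is known to be open and normal, the remainder is Hall's argument essentially verbatim.
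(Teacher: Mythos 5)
Your overall strategy---transporting Hall's argument through Pontryagin duality, with the annihilator correspondence converting a proper nontrivial $G$-invariant subgroup of $\widehat{M}$ into a proper nontrivial closed normal subgroup of $G$ contained in $M$---is exactly the paper's, and your second and third paragraphs (closedness of the stabilisers, openness via the countable-index Baire argument, the finitely-many-primes computation producing $M_1$) match the paper's proof step for step. Your direct verification that $\Stab_G(\chi)$ is closed, as an intersection of preimages of points under the continuous maps $g \mapsto \chi(gmg^{-1})$, is if anything cleaner than the paper's route through $\N_G(\ker\chi_i)$.

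There is, however, a genuine gap at the very first step. You invoke Proposition~\ref{prop:Qhat_characterization} to conclude $M \cong \widehat{\Qb}^{\kappa}$ from the hypotheses that $M$ is compact, abelian and torsion-free, but that proposition additionally requires $M$ to be densely divisible (equivalently, connected). Compact and torsion-free alone only gives $M \cong \widehat{\Qb}^{\kappa} \oplus \prod_{p \in \Pb}\Zb^{\kappa_p}_p$ by Lemma~\ref{lem:tf_compact}, and the groups $\Zb^{\kappa_p}_p$ are compact, abelian and torsion-free without being of the form $\widehat{\Qb}^{\kappa}$. You must first use minimality to exclude the profinite possibilities: since $M$ is topologically characteristically simple, its connected component is trivial or all of $M$, so $M$ is connected or profinite; in the profinite case the subgroups $P_p(M)$ are closed and characteristic, forcing $M$ to be pro-$p$ for some $p$, and then $pM$ is a proper nontrivial closed characteristic subgroup of the torsion-free pro-$p$ group $M$, contradicting minimality. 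Hence $M$ is connected, therefore divisible by Lemma~\ref{lem:compact_divisible}, and only now does Proposition~\ref{prop:Qhat_characterization} apply. This is a short argument, but it is necessary and is missing from your write-up; with it inserted, the remainder of your proof is correct and coincides with the paper's.
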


\begin{proof}
The group is $\widehat{\Qb}^{\aleph_0}$ is compact, abelian and torsion-free, and we have seen that it can occur as a minimal closed normal subgroup of a compactly generated \lcsc group.

Now suppose that $G$ is a compactly generated \lcsc group and that $M$ is a minimal closed normal subgroup of $G$ that is compact, abelian and torsion-free.  In particular, $M$ is characteristically simple, so as a compact group, $M$ must either be connected or pro-$p$ for some $p$.  The latter case can be ruled out: if $M$ were to be a torsion-free pro-$p$ group, then $pM$ would be a proper nontrivial closed characteristic subgroup of $M$, which would contradict the minimality of $M$.  So $M$ is connected.

We may assume for a contradiction that $M$ is not isomorphic to $\widehat{\Qb}^{\aleph_0}$, so by Proposition~\ref{prop:Qhat_characterization} we have $M \cong \widehat{\Qb}^n$ for some $n \in \Nb$, and hence $\widehat{M} \cong \Qb^n$.  Let $\{\chi_1,\dots,\chi_n\}$ be a basis for $\widehat{M}$ as a $\Qb$-vector space.  We have a natural action of $G$ on $\widehat{M}$ given by
\[
(g.\chi)(m) = \chi(g\inv mg) \quad (g \in G, \chi \in \widehat{M}, m \in M).
\]
Since $\widehat{M}$ is countable, the stabilizer of $\xi_i$,
\[
G_i := \{g \in G \mid \forall m \in M: \chi_i(g\inv mg) = \chi_i(m)\}
\]
has countable index in $G$.  Moreover, $G_i$ is closed: specifically, we have
\[
G_i/\ker\chi_i = \CC_H(M/\ker\chi_i)
\]
where $H = \N_G(\ker\chi_i)/\ker\chi_i$, and we see that $H$ is a locally compact group since both $\ker\chi_i$ and $\N_G(\ker\chi_i)$ are closed.  Thus $G_i$ is open.  As in the proof of Proposition~\ref{prop:Q_characterization:minimal} we see that $\bigcap^n_{i=1}G_i$ acts trivially on $\widehat{M}$.  Since the characters of $M$ distinguish points, we have
\[
\CC_G(M) = \bigcap^n_{i=1}G_i,
\]
so $\CC_G(M)$ is open and $G = \grp{x_1,\dots,x_m}\CC_G(M)$.

As in the proof of Proposition~\ref{prop:Q_characterization:minimal}, we obtain a proper nontrivial subgroup $V_1$ of $\widehat{M}$ that is preserved by $G$.  The annihilator of $V_1$ in $M$,
\[
V^{\perp}_1 = \bigcap_{\chi \in V_1}\ker\chi,
\]
is then a proper nontrivial closed subgroup of $M$ that is normal in $G$, contradicting the minimality of $M$.
\end{proof}

\subsection{The groups $\Qb_p(\kappa)$}

The group $\Qb_p(\kappa)$ is the local direct product $\prod_X (\Qb_p,\Zb_p)$ where the indexing set $X$ has size $\kappa$.  The group $\Qb_p(\aleph_0)$ has been proposed (for example \cite{CT}) as a $p$-adic version of the infinite-dimensional separable Hilbert space, over which one can define operator algebras and representation theory analogous to the classical theory over $\Rb$ and $\Cb$.  However, in contrast to the other families, the author is not aware of a standard characterization of the groups $\Qb_p(\kappa)$ as abelian \lcsc groups.  We provide a new characterization here.

\begin{prop}\label{prop:Qpkappa_characterization}Let $A$ be an abelian \lcsc group and let $p \in \Pb$.  Then the following are equivalent:
\begin{enumerate}[(i)]
\item $A \cong \Qb_p(\kappa)$ for some $\kappa \in \Nb \cup \{\aleph_0\}$;
\item $A$ is totally disconnected and torsion-free, and both $P_p(A)$ and $pA$ are dense in $A$;
\item $A$ is totally disconnected and torsion-free, $P_p(A) = A$, and $\divi(A)$ is dense in $A$.
\end{enumerate}
\end{prop}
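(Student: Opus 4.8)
The plan is to prove $(i)\Rightarrow(ii)$ and $(i)\Rightarrow(iii)$ by inspection, then $(ii)\Leftrightarrow(iii)$ cheaply, and finally to derive $(iii)\Rightarrow(i)$, where the real work lies.

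\emph{The easy implications.} If $A=\Qb_p(\kappa)=\bigoplus_I(\Qb_p,\Zb_p)$ with $|I|=\kappa$, then $A$ is totally disconnected (it has the profinite open subgroup $\prod_I\Zb_p$) and torsion-free, and a direct computation with the defining neighbourhood basis shows $p^na\to 0$ for every $a\in A$, so $P_p(A)=A$; moreover the finitely supported tuples form a dense subgroup isomorphic to $\bigoplus_I\Qb_p$, which is divisible since $\Qb_p$ is a field of characteristic $0$, and in particular $p$-divisible, so both $\divi(A)$ and $pA$ are dense. Thus $(i)$ gives $(ii)$ and $(iii)$. For $(ii)\Leftrightarrow(iii)$ note that both sides assume $A$ totally disconnected and torsion-free. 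Given $(ii)$: $P_p(A)$ is dense, and closed by Lemma~\ref{lem:PG_closed}, hence $P_p(A)=A$, and then Lemma~\ref{lem:pdense_divisible} converts density of $pA$ into density of $\divi(A)$, so $(iii)$ holds. Given $(iii)$: $P_p(A)=A$ is a fortiori dense, and $pA\supseteq p\,\divi(A)=\divi(A)$ is dense, so $(ii)$ holds.

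\emph{Reduction for $(iii)\Rightarrow(i)$.} Assume $(iii)$ and choose a compact open subgroup $U\le A$. Since $P_p(A)=A$, $U$ is a torsion-free abelian pro-$p$ group, so $U\cong\Zb_p^{\kappa}$ with $\kappa\le\aleph_0$ by Lemma~\ref{lem:Zp_direct}. The quotient $A/U$ is a countable discrete abelian $p$-group (every $a\in A$ has $p^na\in U$ eventually), and it is $p$-divisible, since $pA+U$ is an open, hence closed, subgroup containing the dense set $pA$, forcing $pA+U=A$; so $A/U\cong(\Zb[1/p]/\Zb)^{(\mu)}$ with $\mu\le\aleph_0$, and one checks $\kappa,\mu\ge 1$ (if $\kappa=0$ then $A$ is discrete and torsion, so $A=\triv$; if $\mu=0$ then $A=U$ and $pA$ is a proper closed subgroup, contradicting density). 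If $\kappa$ is finite then $pU$ is open in $U$, hence in $A$, so $pA$ is open and dense, i.e. $pA=A$; together with the torsion-free $\Zb_p$-module structure this makes $A$ a $\Qb_p$-vector space, and $A=\bigcup_n p^{-n}U$ is an increasing union of open subgroups each isomorphic to $\Zb_p^{\kappa}$, whence $A\cong\Qb_p^{\kappa}=\Qb_p(\kappa)$. If instead $\mu$ is finite, apply Pontryagin duality: by Lemma~\ref{lem:dual}, $\widehat A$ is again totally disconnected, torsion-free and densely divisible with $P_p(\widehat A)=\widehat A$, and its compact open subgroup $U^{\perp}\cong\widehat{A/U}\cong\Zb_p^{\mu}$ has finite rank, so by the previous case $\widehat A\cong\Qb_p^{\mu}$ and hence $A\cong\Qb_p^{\mu}$. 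This leaves only the case $\kappa=\mu=\aleph_0$.

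\emph{The case $\kappa=\mu=\aleph_0$, and the main obstacle.} Here one must construct an isomorphism $A\cong\Qb_p(\aleph_0)$ directly. The building blocks are the closed subgroups $\Qb_pd$ (the $\Qb_p$-span of $d$ inside $A$, equal to the increasing union $\bigcup_n\cgrp{p^{-n}d}$ of compact subgroups) attached to an element $d\in\divi(A)\cap U$ with $d\notin pU$: for such $d$, purity of $\Zb_pd$ in $U$ gives $\Qb_pd\cap U=\Zb_pd$, which is compact and open in $\Qb_pd$, so $\Qb_pd$ is locally compact, hence closed in $A$, and isomorphic to $\Qb_p$ by the finite case already treated (applied with $\kappa=1$); also $\Zb_pd$ is a direct summand of $U$ by Lemma~\ref{lem:Zp_direct}(ii). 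The goal is to choose a sequence $d_1,d_2,\dots$ of such elements so that the $\Zb_pd_k$ form a basis of the pro-$p$ group $U$ and the images of the $\Qb_pd_k$ in $A/U$ are independent Prüfer subgroups generating $A/U$; granting this, $A=\bigoplus_k(\Qb_pd_k,\Zb_pd_k)\cong\Qb_p(\aleph_0)$, the final topological identification being a routine application of the open mapping theorem at each finite stage $\bigoplus_{k\le n}(\Qb_pd_k,\Zb_pd_k)\oplus W_n\cong\Qb_p^n\times\Zb_p^{\aleph_0}$ (with $W_n$ a complement of $\bigoplus_{k\le n}\Zb_pd_k$ in $U$). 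The elements are produced by a back-and-forth argument: at ``odd'' stages we enlarge the current basis of $U$, and at ``even'' stages we lift a prescribed generating chain of a Prüfer summand of $A/U$ to a divisible element, using the lifting of homomorphisms from $\Zb[1/p]$ provided by \cite[Theorem~1.34]{HHR} exactly as in the proof of Lemma~\ref{lem:pdense_divisible}; in each case the new element is chosen divisible, with a pure $\Zb_p$-line, and independent of the finitely many lines built so far. The main obstacle is precisely this bookkeeping: neither a single pass through a basis of $U$ nor a single pass through the Prüfer summands of $A/U$ suffices on its own (a divisible basis of $U$ need not span $A/U$, and lifts of the Prüfer summands need not exhaust $U$), so the two exhaustion processes must be interleaved, and one must verify that at each stage the next required piece of $U$ (respectively of $A/U$) is still realizable by a line that is pure and in direct sum with the previously chosen ones.
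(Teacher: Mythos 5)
Your easy implications and your reduction are sound and essentially parallel to the paper's: (i)$\Rightarrow$(ii) by inspection, (ii)$\Leftrightarrow$(iii) via Lemmas~\ref{lem:PG_closed} and~\ref{lem:pdense_divisible}, and the finite-rank case $U\cong\Zb_p^n$ forcing $A\cong\Qb_p^n$ (the paper gets this a little more directly from Lemma~\ref{lem:Zp_direct}(iv): finite-rank $\Zb_p$-submodules of $U$ are closed, so the dense submodule $\divi(A)\cap U$ is all of $U$ and $A=U^{\divi}$). The dualization to dispose of ``$\mu$ finite'' is correct but an unnecessary detour; the paper never considers $A/U$ separately. The genuine gap is in the case $\kappa=\aleph_0$, which is the entire content of the hard implication: there your text is a plan, not a proof. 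You name the target decomposition $A=\bigoplus_k(\Qb_p d_k,\Zb_p d_k)$ and correctly diagnose that neither exhaustion of $U$ nor of $A/U$ suffices alone, but the inductive step of your back-and-forth --- that at each stage one can produce a \emph{divisible} element $d$ whose line $\Zb_p d$ is pure, complements the finitely many lines already chosen inside a direct summand of $U$, realizes the prescribed next piece of $U$ or of $A/U$, and does so in such a way that the resulting sequence converges to $0$ and is topologically independent, hence an actual basis of the pro-$p$ group $U$ --- is exactly the crux of the proposition, and you assert it (``one must verify\dots'') without an argument.

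For comparison, the paper sidesteps the interleaving entirely by making one submodule do both jobs. It takes $M=\divi(A)\cap U$, which is simultaneously dense in $U$ and satisfies $M^{\divi}+U=A$, writes $M=\bigcup_i M_i$ with $M_i$ of rank $i$, and replaces each $M_i$ by its root-closure $M_i'$ in $U$, shown to be a closed \emph{pure} direct summand of the same rank still contained in $\divi(A)$. It then proves a concrete triangularization claim: fixing a basis $(e_i)$ of $U$ with coordinate projections $\pi_k$ onto $\prod_{i\le k}\cgrp{e_i}$, and reordering the $e_i$ as one goes, one can choose $a_n\in M_n\cap\ker\pi_{n-1}$ with $n$-th coordinate $1$ so that $\{a_1,\dots,a_n\}$ is a basis of $M_n$; the key computation is $\pi_{n-1}(M_{n-1})=\pi_{n-1}(M_n)=\prod_{i\le n-1}\cgrp{e_i}$, which forces $M_n=M_{n-1}\oplus\cgrp{a_n}$ with $a_n\in\ker\pi_{n-1}$. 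The triangular form gives $a_n\to 0$ and topological independence for free, density of $M$ in $U$ gives topological generation, and $M^{\divi}+U=A$ ensures that the basis-matching isomorphism $\theta:U\rightarrow\Zb_p^{\Nb}$ carries $A$ onto $\Qb_p(\Nb)$ after passing to divisible hulls. Something of this precision is what your ``realizability at each stage'' step would have to contain; as written, the proposal does not supply it.
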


\begin{proof}
It is clear that (i) implies (ii), and we see that (ii) implies (iii) by Lemmas~\ref{lem:PG_closed} and~\ref{lem:pdense_divisible}.   It now remains to show that (iii) implies (i).  So from now on we assume that $A$ is totally disconnected and torsion-free, and that both $P_p(A)$ and $\divi(A)$ are dense in $A$.  In fact $P_p(A)=A$ by Lemma~\ref{lem:PG_closed}, so we can regard $A$ as a $\Zb_p$-module; in particular, since $A$ is torsion-free, it follows that $A$ is not discrete.

Fix a compact open subgroup $U$ of $A$.  Since $U$ is an infinite torsion-free second-countable pro-$p$ group, by Lemma~\ref{lem:Zp_direct} we have $U \cong \Zb^{\kappa}_p$ for some $\kappa \in \Nb \cup \{\aleph_0\}$.

Next, we note that $\divi(A) \cap U$ is a dense $\Zb_p$-submodule of $U$: given $\lambda \in \Zb_p$ and $a \in \divi(A)$, then $\lambda a$ has $n$-th root $\lambda(a/n)$ in $A$ for all $n \in \Nb$, so $\lambda a \in \divi(A) \cap U$.  In the case that $U \cong \Zb^n_p$ for some $n \in \Nb$, then all $\Zb_p$-submodules of $U$ are closed by Lemma~\ref{lem:Zp_direct}, so $\divi(A) \ge U$; in this case, $A$ is divisible, so $A = U^{\divi} \cong \Qb^n_p$.  Thus we may assume that $\kappa = \aleph_0$.  More explicitly, let us write $\Qb_p(\aleph_0)$ as $\Qb_p(\Nb) := \bigoplus_{\Nb}(\Qb_p,\Zb_p)$ and let $\delta_i$ be the element of $\bigoplus_{\Nb}(\Qb_p,\Zb_p)$ with $i$-th entry $1$ and all other entries $0$.  Note that $\{\delta_i \mid i \in \Nb\}$ is a basis for $\Zb^{\Nb}_p := \prod_{\Nb}\Zb_p$.

Note that, since $\Qb_p(\Nb)/\Zb^{\Nb}_p$ is torsion, we have $\Qb_p(\Nb) \le (\Zb^{\Nb}_p)^{\divi}$.  Similarly, $A/U$ is torsion, since $U$ is open and the $p^n$-th powers of every element of $A$ converge to the identity as $n \rightarrow \infty$; thus $A \le U^{\divi}$.  We aim to find an isomorphism $\theta: U \rightarrow \Zb^{\Nb}_p$ of topological groups whose divisible extension $\theta^{\divi}$ satisfies $\theta^{\divi}(A) = \Qb_p(\Nb)$.

Notice that we can write $A$ as a subgroup $M^{\divi} + U$ of $U^{\divi}$, where $M$ is a $\Zb_p$-submodule of $U$.  Indeed, since $A = \divi(A) + U$ and $U$ is essential, the submodule $M = \divi(A) \cap U$ will suffice.  In fact, it suffices to take a submodule $M$ that contains some multiple of an element of $a+U$ for each $a \in A$.  Since $A/U$ is countable we can take $M$ to be countably generated as a $\Zb_p$-module.  That is, we can take $M = \bigcup_{i \in \Nb}M_i$, where $(M_i)_{i \in \Nb}$ is an increasing sequence of finite-rank $\Zb_p$-submodules of $U$.

Given one such $\Zb_p$-submodule $M_i$, let $M'_i$ be the set of roots of $M_i$ in $U$.  Since $M_i$ has finite rank, it is closed by Lemma~\ref{lem:Zp_direct}; hence $M_i$ is isomorphic as a topological group to $\Zb^r_p$ for some $r \in \Nb$.  We then see that $M_i$ has countable index in $M^{\divi}_i \cong \Qb^r_p$.  Since $M_i$ is essential in $M'_i$, the inclusion of $M_i$ in $M'_i$ extends to a continuous surjection of $M^{\divi}_i$ onto $(M'_i)^{\divi}$.  It follows that in fact $M_i$ is open in $M'_i$, so $M'_i$ is closed in $U$.  Since $U$ is compact we conclude that $|M'_i:M_i|$ is finite.  We also see that $M'_i \le \divi(A)$ and that $M'_i$ is pure in $U$, so $M'_i$ is a direct summand of $U$ of the same rank as $M_i$.  Thus by replacing $M_i$ with $M'_i$, we may assume $M = \bigcup_{i \in \Nb}M_i$, where $(M_i)_{i \in \Nb}$ is an increasing sequence of direct summands of $U$.

By refining the sequence, we may also assume that $M_i$ has rank $i$; in other words, $M_1 = \cgrp{a_1}$ and thereafter $M_{i+1} = M_i \oplus \cgrp{a_{i+1}}$, for some sequence $(a_i)_{i \in \Nb}$ in $\divi(A) \cap U$.

Choose an ordered basis $(e_i)_{i \in \Nb}$ for $U$.

\emph{Claim: Subject to a suitable reordering of $(e_i)_{i \in \Nb}$: there are $\lambda_{ij} \in \Zb_p$ with $\lambda_{ij} = 0$ for $i > j$ and $\lambda_{ij} =1$ for $i=j$, and $a_i := \sum_{j \in \Nb}\lambda_{ij}e_j$, such that $\{a_1,\dots,a_n\}$ forms a basis for $M_n$ for all $n \in \Nb$.}

We proceed by induction starting from $n=0$, where $M_0 = \{0\}$; there is nothing to prove in the base case.  Let $n \ge 1$.  Given $k \in \Nb$, let $\pi_k$ be the projection of $U = \prod_{i \in \Nb}\cgrp{e_i}$ onto the direct summand $\prod_{i \le k}\cgrp{e_i}$ with respect to the basis $(e_i)_{i \in \N}$.  By the inductive hypothesis we have a suitable basis $\{a_1,\dots,a_{n-1}\}$ for $M_{n-1}$, and we see that
\[
\pi_{n-1}(M_{n-1}) = \prod_{i \le n-1}\cgrp{e_i} = \pi_{n-1}(M_n),
\]
so $M_n = M_{n-1} + \ker\pi_{n-1}$.  Considering ranks, it follows that in fact $M_n = M_{n-1} \oplus \cgrp{a_n}$ for some $a_n \in \ker\pi_{n-1}$.  Since $M_n$ is a direct summand of $U$, we see that $a_n \not\in pU$, and hence if we write $a_n$ as $\prod_{j \ge n}\lambda_je_j$, there is some $k \ge n$ such that $\lambda_k \in \Zb^*_p$.  By replacing $a_n$ with $\lambda^{-1}_ka_n$ we may assume $\lambda_k = 1$, and then by swapping $e_k$ and $e_n$ we may assume that $k = n$.  (Note that this last step does not change $e_1,\dots,e_{n-1}$, so as we run the argument, we will eventually arrive at a fixed ordered basis for $U$ with the required properties.)  We now see that $\{a_1,\dots,a_n\}$ has the required form, completing the proof of the claim.

We now consider the set $X = \{a_i \mid i \in \Nb\}$ provided to us by the claim.  Note that the restriction on the coefficients of $a_i$ with respect to the basis $\{e_i \mid i \in \Nb\}$ ensures that $a_i \rightarrow 0$ as $i \rightarrow \infty$.  The fact that $M$ is dense in $U$ ensures that $X$ is a topological generating set for $U$.  Moreover, $X$ is topologically linearly independent over $\Zb_p$: given an equation of the form $\sum_{i \in \Nb}\mu_ia_i = 0$ with $\mu_i \in \Zb_p$, we see by applying the maps $\pi_n$ that $\mu_n=0$ for all $n \in \Nb$.  Thus $X$ is a basis for $U$.

We now carry out the same argument for writing $\Qb_p(\Nb)$ as a subgroup $N^{\divi} + \Zb^{\Nb}_p$ of $(\Zb^{\Nb}_p)^{\divi}$.  In this case, we see that it suffices to take $N = \bigcup_{i \in \Nb}N_i$, where $N_i = \cgrp{\delta_1,\dots,\delta_i}$.

Since $(a_i)_{i \in \Nb}$ and $(\delta_i)_{i \in \Nb}$ are bases of $U$ and $\Zb^{\Nb}_p$ respectively, there is a unique isomorphism $\theta: U \rightarrow \Zb^{\Nb}_p$ of topological groups such that $\theta(a_i) = \delta_i$; this extends to an isomorphism $\theta^{\divi}: U^{\divi} \rightarrow (\Zb^{\Nb}_p)^{\divi}$.  It is then clear that $\theta^{\divi}(M) = N$ and hence $\theta^{\divi}(A) = \Qb_p(\Nb)$, so $\theta^{\divi}$ restricts to an isomorphism from $A$ to $\Qb_p(\Nb)$.  This completes the proof that (iii) implies (i) and hence that all three statements are equivalent.
\end{proof}

If $A \cong \Qb_p(\kappa)$, then the cardinal $\kappa$ is the rank in the sense of Lemma~\ref{lem:Zp_direct} of any compact open subgroup of $A$.  Thus $\kappa$ is uniquely determined by $A$.  Clearly also $p$ is uniquely determined.

\begin{example}\label{ex:Qpkappa}Fix $p \in \Pb$ and $\kappa \in \Nb \cup \infty$.  We have a compactly generated metabelian group $G = \Qb_p \rtimes \grp{s}$, where $s$ acts on $\Qb_p$ by multiplication by $p$.  Note that the proper nontrivial closed subgroups of $\Qb_p$ are exactly its compact open subgroups, which are all conjugate in $G$; from here it is easy to see that $\Mon(G) = \Qb_p$.

Generalizing this example, we realize $\Qb_p(\kappa)$ as the monolith of
\[
\bigoplus_{C_{\kappa}} (\Qb_p \rtimes \grp{s},\Zb_p) \rtimes C_{\kappa},
\]
where $C_{\kappa}$ acts by shifting the indexing set of the local direct product.
\end{example}

\section{The classification of topologically characteristically simple abelian \lcsc groups}\label{sec:classification}

We now show that the five families described in the previous section comprise the whole of the class $\ms{MA}$ of topologically characteristically simple abelian \lcsc groups.  With the characterizations of the previous section in hand, the proof is straightforward: we show that any $A \in \ms{MA}$ must have one of the sets of properties characterizing one of the families.

\begin{lem}\label{lem:characteristic_dichotomy}
Let $A \in \ms{MA}$.  Then the following holds:
\begin{enumerate}[(1)]
\item Either $A$ has prime exponent, or else $A$ is torsion-free.
\item If $A$ is torsion-free then $pA$ is dense for all $p \in \Pb$.
\item $A$ is either connected or totally disconnected.
\item $P(A)$ is trivial or equal to $A$.
\item If $A$ is totally disconnected and $P(A) = A$, then $P_p(A) = A$ for some $p \in \Pb$.
\end{enumerate}
\end{lem}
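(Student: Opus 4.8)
The unifying idea is that a group $A \in \ms{MA}$ has no closed characteristic subgroup other than $\{0\}$ and $A$, so for each part the plan is to produce a suitable closed characteristic subgroup and then decide which of the two alternatives it must be. For (1), I would look at $A[p] := \{a \in A \mid pa = 0\}$, the kernel of the continuous map $a \mapsto pa$, which is therefore closed, and is characteristic since every automorphism is $\Zb$-linear; if $A[p] = A$ for some prime $p$ then $A$ has exponent $p$, whereas if $A[p] = \{0\}$ for every $p$ then $A$ has no element of prime order, and hence (every nonzero torsion element has a power of prime order) $A$ is torsion-free. For (2), the subgroup $\ol{pA}$ is closed and characteristic, and it cannot be $\{0\}$ when $A$ is torsion-free and nontrivial, so $\ol{pA} = A$. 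For (3), the identity component of $A$ is closed and is preserved by every automorphism (automorphisms are homeomorphisms), so it is either $A$ (i.e.\ $A$ is connected) or $\{0\}$ (i.e.\ $A$ is totally disconnected). For (4), Lemma~\ref{lem:PG_closed} gives that $P(A)$ is closed, and it is characteristic because an automorphism $\phi$ carries $\cgrp{g}$ onto $\cgrp{\phi(g)}$ and so preserves compactness of closed cyclic subgroups; hence $P(A)$ is $\{0\}$ or $A$, which is exactly the assertion.

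Part (5) is the one with actual content. The same reasoning shows each $P_p(A)$ is characteristic (if $g^{p^n} \to 0$ then $\phi(g)^{p^n} = \phi(g^{p^n}) \to 0$) and is closed by Lemma~\ref{lem:PG_closed}, since $A$ is totally disconnected; so $P_p(A)$ is $\{0\}$ or $A$, and the task is to rule out $P_p(A) = \{0\}$ for every prime $p$. I would argue by cases. If $A$ is discrete, then $P(A) = A$ forces every cyclic subgroup of $A$ to be finite, so $A$ is a torsion group, hence by (1) it has prime exponent $p$, and then every element lies in $P_p(A)$, so $P_p(A) = A$. If $A$ is not discrete, then van Dantzig's theorem furnishes a nontrivial compact open subgroup $U$; as a nontrivial profinite abelian group, $U$ decomposes as the direct product of its Sylow pro-$q$ subgroups over the primes $q$, so $P_p(U) \ne \{0\}$ for some prime $p$; since $U$ carries the subspace topology, $P_p(U) \subseteq P_p(A)$, and hence $P_p(A) \ne \{0\}$, so $P_p(A) = A$.

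The only point that needs a little care is the profinite step in (5): one must know that a nontrivial profinite abelian group has a nontrivial Sylow pro-$p$ subgroup for some $p$ --- equivalently, that such a group is the product of its Sylow pro-$p$ subgroups --- which follows from the corresponding statement for finite abelian groups by the inverse-limit argument already used to prove Lemma~\ref{lem:PG_closed}, together with the observation made there that $P_p$ of a profinite group is exactly its pro-$p$ part. Everything else is the routine ``closed and characteristic, hence $\{0\}$ or everything'' dichotomy, applied to a well-chosen subgroup and combined with an easy argument excluding the unwanted alternative.
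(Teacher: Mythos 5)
Your proposal is correct and follows essentially the same route as the paper: each part reduces to exhibiting a closed characteristic subgroup ($\Omega_p(A)$, $\overline{pA}$, $A^\circ$, $P(A)$, $P_p(A)$) and excluding the unwanted alternative, with part (5) resting, exactly as in the paper, on the fact that a nontrivial profinite open subgroup has a nontrivial $p$-Sylow subgroup for some $p$. The only cosmetic difference is that in (5) the paper first invokes (1) to reduce to the torsion-free case and deduces non-discreteness from there, whereas you split on discrete versus non-discrete; both variants are sound.
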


\begin{proof}
(1) We note that for all $p \in \Pb$, the subgroup $\Omega_p(A) := \{a \in A \mid pa = 0\}$ is closed, and hence belongs to $\{\{0\},A\}$.  If $\Omega_p(A) = A$ for some $p \in \Pb$, then $A$ has exponent $p$.  Otherwise, $\Omega_p(A) = \{0\}$ for all $p \in \Pb$, from which it follows for all $n \in \Nb$ (by induction on the number of prime factors of $n$) that the only solution in $A$ to the equation $na = 0$ is $a = 0$, that is, $A$ is torsion-free.

(2) Suppose $A$ is torsion-free and let $p \in \Pb$.  Then $pA$ is an injective image of $A$, hence nontrivial; thus $pA$ is dense in $A$.

(3) The connected component $A^\circ$ of the identity in $A$ is a closed characteristic subgroup, so either $A^\circ = A$ or $A^\circ = \{0\}$.

(4) This is clear, as $P(A)$ is a closed characteristic subgroup.

(5) If $A$ has exponent $p \in \Pb$ then $P_p(A) = A$, so by (1) we may assume $A$ is torsion-free.  In particular, $P(A)$ is nontrivial and torsion-free, so $A$ is not discrete.  It follows that $A$ has an infinite profinite open subgroup $U$.  Since $U$ is nontrivial, it must have a nontrivial $p$-Sylow subgroup $S$ for some $p \in \Pb$.  We then see that $P_p(A) \cap U = S$; in particular, $P_p(A)$ is nontrivial and closed.  Since $P_p(A)$ is clearly also characteristic, we deduce that $A = P_p(A)$.
\end{proof}

\begin{thm}\label{thm:classification}
Let $A$ be a nontrivial abelian \lcsc group.  Then the following are equivalent:
\begin{enumerate}[(i)]
\item $A$ is isomorphic to one of the following, where $n \in \Nb$, $\kappa \in \Nb \cup \{\aleph_0\}$ and $p \in \Pb$ as applicable:
\[
C^n_p; \; C^{\aleph_0}_p; \; C^{(\aleph_0)}_p; C^{\aleph_0}_p \oplus C^{(\aleph_0)}_p;  \; \Rb^n; \; \Qb^{(\aleph_0)}; \; \widehat{\Qb}^{\aleph_0}; \; \Qb_p(\kappa).
\]
\item $A$ is isomorphic to the monolith of some soluble \lcsc group of derived length at most $3$.
\item $A$ is topologically characteristically simple.
\item $A$ satisfies the statements (1)--(5) of Lemma~\ref{lem:characteristic_dichotomy}.
\end{enumerate}
\end{thm}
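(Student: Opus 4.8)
The plan is to prove the cycle $(i)\Rightarrow(ii)\Rightarrow(iii)\Rightarrow(iv)\Rightarrow(i)$.  For $(i)\Rightarrow(ii)$ I would simply assemble the realizations obtained in Section~\ref{sec:families}: each listed isomorphism type has already been exhibited there as the monolith of an explicit soluble \lcsc group.  The elementary abelian cases come from Proposition~\ref{prop:elab_minimal}; $\Rb^n$ from Example~\ref{ex:Rn}; $\Qb^{(\aleph_0)}$ and $\widehat{\Qb}^{\aleph_0}$ from Hall's group of Theorem~\ref{thm:Hall} and its dual construction $\widehat{\Qb}^{\aleph_0}\rtimes(\Zb\wr\Zb)$, while finite powers $\Qb^n$, $\widehat{\Qb}^n$ arise from $(\Qb\rtimes\Qb^*_{>0})\wr C_n$ and $(\widehat{\Qb}\rtimes\Qb^*_{>0})\wr C_n$; and $\Qb_p(\kappa)$ from Example~\ref{ex:Qpkappa}.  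In every case the ambient group is an abelian normal subgroup extended by a metabelian group (a wreath product of a metabelian group by an abelian one, or a semidirect product of an abelian group by a metabelian one), so a one-line computation with the derived series shows the derived length is at most $3$.

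For $(ii)\Rightarrow(iii)$: suppose $A$ is nontrivial and $A\cong\Mon(G)$.  If $C$ is a closed characteristic subgroup of $A$, then since $A$ is normal in $G$, conjugation by any $g\in G$ restricts to a topological automorphism of $A$ and hence preserves $C$; so $C$ is a closed normal subgroup of $G$, and if $C\neq\{1\}$ then $C\supseteq\Mon(G)=A$.  Thus the only closed characteristic subgroups of $A$ are $\{1\}$ and $A$, i.e. $A$ is topologically characteristically simple.  The implication $(iii)\Rightarrow(iv)$ is precisely Lemma~\ref{lem:characteristic_dichotomy}.

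The real content is $(iv)\Rightarrow(i)$, which I would handle by the case analysis afforded by properties (1)--(5).  By (1), either $A$ has prime exponent $p$ — and then Proposition~\ref{prop:elab_characterization} places $A$ among the groups $C^\kappa_p\oplus C^{(\kappa')}_p$ — or $A$ is torsion-free, which I assume from now on; then (2) makes $pA$ dense for every prime $p$.  Property (3) now gives a dichotomy.  If $A$ is connected, (4) leaves two possibilities: $P(A)=\{0\}$, and then $A\cong\Rb^n$ by Proposition~\ref{prop:R_characterization}; or $P(A)=A$, and then $A\cong\widehat{\Qb}^\kappa$ by Proposition~\ref{prop:Qhat_characterization} (applied via ``torsion-free, connected, $P(A)=A$'').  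If instead $A$ is totally disconnected, (4) again splits into two cases: $P(A)=\{0\}$, where Proposition~\ref{prop:Q_characterization} (applied via ``totally disconnected, $pA$ dense for all $p$, $P(A)=\{0\}$'') gives $A\cong\Qb^{(\kappa)}$; or $P(A)=A$, where (5) supplies a prime $p$ with $P_p(A)=A$, and then Proposition~\ref{prop:Qpkappa_characterization} (applied via ``totally disconnected, torsion-free, $P_p(A)$ dense, $pA$ dense'') gives $A\cong\Qb_p(\kappa)$.  Every branch ends in the list of (i), completing the cycle.

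I do not expect any single hard argument here; the main obstacle is the combinatorial care required in the last implication.  One must check that the case split is genuinely exhaustive — which relies on (3) and (4) being true dichotomies and on the non-torsion-free possibility being fully dispatched by (1) together with Proposition~\ref{prop:elab_characterization} — and, in each branch, verify that the exact hypotheses of the invoked characterization from Section~\ref{sec:families} are met and that the resulting isomorphism type, together with the admissible ranges of $n$, $\kappa$ and $\kappa'$, is exactly one of those appearing in (i).
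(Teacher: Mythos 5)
Your proposal is correct and follows essentially the same route as the paper: the easy implications $(i)\Rightarrow(ii)\Rightarrow(iii)\Rightarrow(iv)$ are dispatched exactly as in the text, and $(iv)\Rightarrow(i)$ proceeds by the identical four-way case split on connectedness and $P(A)$ after separating the prime-exponent case, invoking the same characterizations from Section~\ref{sec:families} in each branch. The only point worth flagging is that the branches for $\Qb^{(\kappa)}$ and $\widehat{\Qb}^{\kappa}$ produce all $\kappa\in\Nb\cup\{\aleph_0\}$, so (as you implicitly assume) the list in (i) must be read as including the finite powers $\Qb^{n}$ and $\widehat{\Qb}^{n}$ as well.
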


\begin{proof}
The examples given in the previous section show that (i) implies (ii).  Clearly (ii) implies (iii), and (iii) implies (iv) by Lemma~\ref{lem:characteristic_dichotomy}.  It remains to show that (iv) implies (i).

If $A$ has prime exponent then $A \cong C^{\kappa}_p \oplus C^{(\kappa')}_p$ for some $\kappa \in \Nb \cup \{\aleph_0\}$ and $\kappa' \in \{0,\aleph_0\}$ by Proposition~\ref{prop:elab_characterization}, so we may assume that $A$ does not have prime exponent; thus $A$ is torsion-free by (1), and hence by (2), $pA$ is dense in $A$ for all $p \in \Pb$.

By (3), $A$ is either connected or totally disconnected, and by (4), $P(A)$ is either trivial or dense.  Consider the four cases in turn.

\emph{$A$ is connected and $P(A) = \{0\}$.}  By Proposition~\ref{prop:R_characterization}, we have $A \cong \Rb^n$ for some $n \in \Nb$.

\emph{$A$ is connected and $P(A)$ is dense.}  Since $A$ is also torsion-free, we conclude by Proposition~\ref{prop:Qhat_characterization} that $A \cong \widehat{\Qb}^\kappa$ for some $\kappa \in \Nb \cup \{\aleph_0\}$.

\emph{$A$ is totally disconnected and $P(A) = \{0\}$.}  Since $pA$ is dense in $A$ for all $p \in \Pb$, we conclude by Proposition~\ref{prop:Q_characterization} that $A \cong \Qb^{(\kappa)}$ for some $\kappa \in \Nb \cup \{\aleph_0\}$.

\emph{$A$ is totally disconnected and $P(A)$ is dense.}  In this case $A = P_p(A)$ for some $p \in \Pb$ by (5).  Since also $A$ is torsion-free and $pA$ is dense, we conclude by Proposition~\ref{prop:Qpkappa_characterization} that $A \cong \Qb_p(\kappa)$ for some $\kappa \in \Nb \cup \{\aleph_0\}$.

In all cases (i) holds, completing the cycle of implications.
\end{proof}

We note the following consequence of Theorem~\ref{thm:classification} together with the constructions of the previous section.

\begin{cor}
With the exception of the groups $\Qb^n$ and $\widehat{\Qb}^n$ for $n \in \Nb$, every abelian topologically characteristically simple \lcsc group occurs as the monolith of a compactly generated soluble \lcsc group of derived length at most $3$.
\end{cor}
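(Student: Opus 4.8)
The plan is to deduce the corollary directly from Theorem~\ref{thm:classification} together with the constructions collected in Section~\ref{sec:families}. First I would invoke the equivalence of (i) and (iii) in Theorem~\ref{thm:classification}: every abelian topologically characteristically simple \lcsc group is isomorphic to a member of one of the five families, namely an elementary abelian group $C^{\kappa}_p \oplus C^{(\kappa')}_p$, or $\Rb^n$, or $\Qb^{(\kappa)}$, or $\widehat{\Qb}^{\kappa}$, or $\Qb_p(\kappa)$ (with $n \in \Nb$, $p \in \Pb$, $\kappa \in \Nb \cup \{\aleph_0\}$, $\kappa' \in \{0,\aleph_0\}$ as appropriate). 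Since $\Qb^{(\kappa)} \cong \Qb^n$ and $\widehat{\Qb}^{\kappa} \cong \widehat{\Qb}^n$ exactly when the parameter $\kappa = n$ is finite, the two excluded families are precisely the finite-parameter members of the third and fourth families. So it remains to exhibit, for each of the other groups $A$ on the list, a compactly generated soluble \lcsc group of derived length at most $3$ whose monolith is isomorphic to $A$.

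For this I would simply quote the relevant construction in each case: Proposition~\ref{prop:elab_minimal} for the elementary abelian groups; Example~\ref{ex:Rn}, i.e. $(\Rb \rtimes \Rb^*_{>0}) \wr C_n$, for $\Rb^n$; Theorem~\ref{thm:Hall} (Hall's finitely generated soluble group of derived length $3$) for $\Qb^{(\aleph_0)}$; the Pontryagin-dual of Hall's construction, $\widehat{\Qb}^{\aleph_0} \rtimes (\Zb \wr \Zb)$, noted in Section~\ref{sec:families}, for $\widehat{\Qb}^{\aleph_0}$; and Example~\ref{ex:Qpkappa}, i.e. $\bigoplus_{C_\kappa}(\Qb_p \rtimes \grp{s}, \Zb_p) \rtimes C_\kappa$, for $\Qb_p(\kappa)$. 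The monolith has already been identified in each cited statement, so all that is left is to confirm that each witnessing group is indeed compactly generated and soluble of derived length at most $3$.

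These confirmations are routine. Hall's group is finitely generated and soluble of derived length $3$. The group $\widehat{\Qb}^{\aleph_0}$ is compact, being the dual of the discrete group $\Qb^{(\aleph_0)}$ by Lemma~\ref{lem:dual}, so $\widehat{\Qb}^{\aleph_0} \rtimes (\Zb \wr \Zb)$ is compactly generated (compact-by-finitely-generated) and, being an extension of an abelian group by a metabelian group, has derived length at most $3$. The group $\Rb \rtimes \Rb^*_{>0}$ is a connected, hence compactly generated, soluble Lie group of derived length $2$, and passing to the restricted wreath product with the finite group $C_n$ keeps the group compactly generated and raises the derived length by at most $1$. Finally, for $\Qb_p(\kappa)$: the factor $\Qb_p \rtimes \grp{s}$ is metabelian, a local direct product of copies of it is still metabelian, and the semidirect product with the abelian group $C_\kappa$ has derived length at most $3$; compact generation is immediate when $\kappa$ is finite and, when $\kappa = \aleph_0$, follows from the observation that the compact open subgroup $\prod_{\Zb}\Zb_p$ of the base, the shift generator of $C_{\aleph_0} = \Zb$, and one lift of $s$ in a single coordinate together generate the whole group, since the shift-conjugates of that lift reach every coordinate and within one coordinate $\grp{\Zb_p, s} = \Qb_p \rtimes \grp{s}$. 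I do not expect any genuine obstacle here: the mathematical substance is entirely contained in Theorem~\ref{thm:classification} and in Section~\ref{sec:families}, and this argument merely assembles it; the only point meriting a sentence of care is the compact generation of the $\Qb_p(\aleph_0)$ example indicated above.
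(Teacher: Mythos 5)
Your proposal is correct and follows exactly the route the paper intends: the corollary is stated there as an immediate consequence of Theorem~\ref{thm:classification} combined with the explicit constructions of Section~\ref{sec:families} (Proposition~\ref{prop:elab_minimal}, Example~\ref{ex:Rn}, Theorem~\ref{thm:Hall} and its dual, and Example~\ref{ex:Qpkappa}), with no further argument given. Your additional verifications of compact generation and derived length --- in particular the generation argument for the $\Qb_p(\aleph_0)$ example --- are accurate and merely make explicit what the paper leaves to the reader.
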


Theorem~\ref{mainthm} is now clear.

\begin{example}\label{ex:minimal}
Let $p \in \Pb$ and let $\Kb$ be the additive group of one of the fields $\Fb_p((t))$, $\Rb$ or $\Qb_p$.  If $\Kb$ is $\Fb_p((t))$ or $\Qb_p$, let $A_{\Kb} = \grp{s}$ where $s$ acts on $\Fb_p((t))$ or $\Qb_p$ as multiplication by $t$ or $p$ respectively, and if $\Kb = \Rb$ we set $A_{\Kb} = \Rb^*_{>0}$.  Thus $\Kb \rtimes A_{\Kb}$ is a compactly generated metabelian \lcsc group with $\Kb$ as its monolith.  We then form a group $B_{\Kb} = (\Kb \times \Kb) \rtimes A_{\Kb}$, where $A_{\Kb}$ has the same action on each copy of $\Kb$.  This time, we see that $B_{\Kb}$ has $2^{\aleph_0}$ minimal closed normal subgroups isomorphic to $\Kb$: namely, for each $\lambda \in \Kb$ there is a minimal closed normal subgroup
\[
L_{\Kb,\lambda} := \{(a,\lambda a) \mid a \in \Kb\}.
\]
In particular, this shows that $\Fb_p((t))$, $\Rb$ and $\Qb_p$ can occur $2^{\aleph_0}$ times as a minimal closed normal subgroup of a compactly generated \lcsc group.  Generalizing this example, we see for $n \in \Nb$ that $\Kb^n$ occurs $2^{\aleph_0}$ times as a minimal closed normal subgroup of $B_{\Kb} \wr C_n$, by taking a subgroup of the base group of the form $(L_{\Kb,\lambda})^n$.  In the case $\Kb = \Qb_p$, we can form the group
\[
\bigoplus_{\Zb} (B_{\Qb_p},\Zb_p \times \Zb_p) \rtimes \Zb,
\]
where $\Zb$ acts by shifting the indexing set; this time $\Qb_p(\aleph_0)$ occurs $2^{\aleph_0}$ times as a minimal closed normal subgroup.
\end{example}

\bibliographystyle{amsplain}
\bibliography{biblio}{}

\end{document}